\definecolor{labelkey}{gray}{.8}
\definecolor{refkey}{gray}{.8}
\definecolor{darkblue}{rgb}{0,0,0.7} 
\definecolor{darkred}{rgb}{0.9,0.1,0.1}
\definecolor{darkgreen}{rgb}{0,0.5,0}
\newtheorem{thm}{Theorem}[section]
\newtheorem{prop}[thm]{Proposition}
\newtheorem{lem}[thm]{Lemma}
\theoremstyle{remark}
\newtheorem{rem}[thm]{Remark}
\theoremstyle{definition}
\newtheorem{defi}[thm]{Definition}
\newcommand\norm[1]{\left\lVert#1\right\rVert}
\newcommand\set[1]{\left\{#1\right\}}
\newcommand\bra[1]{\left({#1}\right)}
\newcommand\abs[1]{\left\lvert#1\right\rvert}
\renewcommand{\le}{\leqslant}
\renewcommand{\ge}{\geqslant}
\renewcommand{\leq}{\leqslant}
\renewcommand{\geq}{\geqslant}
\newcommand{\ls}{\lesssim}
\newcommand{\gs}{\gtrsim}
\renewcommand{\subset}{\subseteq}
\newcommand{\cH}{\mathcal{H}}
\newcommand{\J}{\mathcal{J}}
\newcommand{\fP}{\mathcal{P}}
\newcommand{\mP}{\mathcal{P}}
\newcommand{\N}{\mathbb{N}}
\newcommand{\1}{\mathbf{1}}
\newcommand{\R}{\mathbb{R}}
\renewcommand{\P}{\mathbb{P}}
\newcommand{\red}{\color{red}}
\newcommand{\dd}{\, \mathrm{d}}
\newcommand{\eff}{\mathrm{eff}}
\newcommand{\clos}[1]{\overline{#1}}
\newcommand{\loc}{\mathrm{loc}}
\newcommand{\app}{\mathrm{app}}
\newcommand{\W}{\mathcal{W}}
\newcommand{\dmin}{d_{\mathrm{min}}}
\DeclareMathOperator*{\esssup}{esssup}
\DeclareMathOperator{\curl}{curl}
\DeclareMathOperator{\Id}{Id}
\DeclareMathOperator*{\dv}{div}
\DeclareMathOperator*{\supp}{supp}
\DeclareMathOperator{\dist}{dist}
\numberwithin{equation}{section}
\begin{document}

\title{The Influence of Einstein's Effective Viscosity on Sedimentation at Very Small Particle Volume Fraction}
\author{Richard M. H\"ofer, Richard Schubert}

\maketitle

\begin{abstract}
We investigate the sedimentation of identical inertialess spherical particles  in a Stokes fluid
in the limit of many small particles. It is known that the presence of the particles leads to an increase of the effective viscosity of the suspension. By Einstein's formula this effect is of the order of the particle volume fraction $\phi$. The disturbance of the fluid flow responsible for this increase of viscosity is very singular (like $\abs{x}^{-2}$). Nevertheless, for well-prepared initial configurations and $\phi\to 0$, we show that the microscopic dynamics is approximated to order $\phi^2 |\log \phi|$ by a macroscopic coupled transport-Stokes system with an effective viscosity according to Einstein's formula.  We provide quantitative estimates both for convergence of the densities in the $p$-Wasserstein distance for all $p$ and for the fluid velocity in Lebesgue
spaces in terms of the $p$-Wasserstein distance of the initial data.
Our proof is based on approximations through the method of reflections and on a generalization of a classical result on convergence to mean-field limits in the infinite Wasserstein metric by Hauray.
\end{abstract}

\section{Introduction}

Let $N \in \N$ and consider an initial distribution of particles $B_i=B_{R}(X_i), i=1,\dots,N$ of radius $R$, sedimenting in a fluid.
We assume that the  domain 
\begin{align}
	\Omega_N := \R^3 \setminus \bigcup_{i=1}^{N} \clos{B_i}
\end{align}
is occupied by a fluid which is modeled by the following Stokes equations in dimensionless form: 
\begin{align}\label{eq:u_N}
\left\{\begin{array}{rcll}
	- \Delta u_N + \nabla p &=&  0, \quad \dv u_N = 0  &\text{in } \Omega_N, \\
	eu_N &=&0 &\text{in } \clos{B_i}, ~ 1 \leq i \leq N, \\
	\int_{\partial B_i}\sigma[u_N]n&=&\frac 1 N g &\text {for all }i=1,\dots,N,\\
	\int_{\partial B_i}(x-X_i)\wedge(\sigma[u_N]n)&=&0 &\text{for all }i=1,\dots,N.\\
\end{array}\right.
\end{align}
Here, $n$ is the inner normal vector at the balls $B_i$ (the outer normal of $\Omega_N$), and the stress tensor $\sigma[u_N]$ is given by
\begin{align}
\sigma[u_N]=2eu_N-p\Id,\qquad \text{where }\quad eu_N=\frac 1 2\bra{\nabla u_N+(\nabla u_N)^T}.
\end{align}
Moreover, $g$ is a constant vector accounting for the gravity.
The pressure $p$ can be viewed as a Lagrange multiplier corresponding to the incompressibility. We do not provide convergence results for the pressure and will therefore denote all the appearing pressure by $p$.

The boundary conditions in \eqref{eq:u_N} are known as sedimentation boundary conditions. The last two lines reflect that the particles are inertialess, and $e u_N = 0$ is imposed since the particles are assumed to be rigid.
Indeed, $e u_N = 0$ in $\overline{B_i}$ is equivalent to the existence of $V_i$ and $\omega_i$ such that $u_N(x) = V_i + (x- X_i) \times \omega_i$ for all $x \in \overline{B_i}$.

The problem becomes dynamic by complementing \eqref{eq:u_N} with the equation of motion for the particle centers:
\begin{align} \label{eq:def.V_i}
\dot{X_i} = V_i :=u_N(X_i),
\end{align}
which renders the domain $\Omega_N$ time dependent.
For details regarding the physical assumptions of this model and related questions, we refer the reader to the introduction of \cite{Hofer18MeanField}.

Our aim is to accurately describe the macroscopic behavior of this system in the limit of many small particles $N \to \infty$, $R \to 0$. ($X_i$, $B_i$ and $R$ implicitly depend on $N$). 
We thus study the dynamics of the empirical distribution of the particles 
\begin{align} \label{eq:rho_N}
\rho_N(t)=\frac 1 N \sum_{i=1}^N \delta_{X_i(t)},
\end{align}
that satisfies the transport equation
\begin{align}
\partial_t \rho_N+u_N\cdot \nabla \rho_N=0.
\end{align}
Assuming $\rho_N(0) \to \rho_0$ in some appropriate sense,
we will show that $\rho_N(t)$ is well described by $\rho(t)$ up to an error of order $
\phi_N^2 |\log \phi_N|$,
 where $\rho$ solves a coupled transport-Stokes system with an effective Einstein viscosity $\mu_\eff = 1 + 5/2 \phi_N \rho$ and where the particle volume fraction $\phi_N$ is defined as
\begin{align} \label{def:phi}
 \phi_N = \frac{4 \pi}{3} N R^3.
\end{align}

\subsection{Previous results}

A first result regarding the sedimentation of spherical particles in Stokes flows has been obtained by Jabin and Otto in \cite{JabinOtto04}. They identified the regime that is sufficiently dilute (corresponding to $NR  \ll 0$ in the setting above) such that the direct effect of the gravity on each particle dominates over the particle interaction through the fluid. Each particle then settles approximately as if it was alone in the fluid.

The first author showed in \cite{Hofer18MeanField} that for well-prepared initial configurations, $\rho_N(t) \to \tilde\tau(t)$ for all $t \geq 0$, where $\tilde \tau$ solves the coupled transport-Stokes system
\begin{align}  \label{eq:v.tau.gammastar}
\left\{\begin{array}{rl}
-\Delta \tilde v+\nabla p&=\tilde \tau g,\\
\dv v&=0,\\
\partial_t \tilde  \tau +(\tilde v + \bra{6 \pi \gamma_\ast}^{-1} g) \cdot \nabla \tilde \tau&=0,\\
\tilde \tau(0)&=\rho_0.
\end{array}\right.
\end{align}
Here, $\gamma_\ast := \lim_{N \to \infty} NR $ quantifies the interaction strength between the particles. The result in \cite{Hofer18MeanField}
is proven for any $\gamma_\ast \in (0, \infty]$. In \cite{Hofer18MeanField} the conditions for the initial configurations consist in a convergence assumption for the empirical density, the conditions that the particles
are well-separated in the sense of
\begin{align} \label{eq:density.condition} \tag{H1}
	\exists c > 0 \, \forall N \in \N \quad  \frac 1 c N^{-1/3} \geq d_{\min}(0) \coloneqq \min_{i\neq j}\abs{X_i(0)-X_j(0)}\ge cN^{-1/3},
\end{align} 
and the condition that the particle volume fraction  is sufficiently small in the sense of
\begin{align} \label{eq:ass.phi.logN} \tag{H2}
 	 \lim_{N \to \infty} \phi_N \log N  = 0.
\end{align}
Note that the first inequality in \eqref{eq:density.condition} always holds if $\rho_N$ converges to some macroscopic density.

Mecherbet showed in \cite{Mecherbet19} that taking into account particle rotations, which are neglected in \cite{JabinOtto04} and \cite{Hofer18MeanField}, does not affect the convergence result. The results in \cite{Mecherbet19}, which hold under different assumptions than those in \cite{Hofer18MeanField} (less restrictive separation condition on the one hand but sufficiently small interaction on the other hand) also contain quantitative estimates for the convergence $\rho_N \to \tilde \tau$ in Wasserstein metrics.

\medskip

On the other hand, it is known that the particles change the effective 
viscosity of the fluid. According to Einstein's formula \cite{Ein06}, this effective viscosity is  given by $\mu_\eff = 1 + 5/2 \phi$, to first order in the (local) particle volume fraction $\phi$.

Rigorous mathematical results have only been obtained in recent years. Haines and Mazzucato  \cite{HM12} proved Einstein's formula for periodic particle configurations on the level of the dissipation rate under straining motion. 

The first result on the level of convergence of the fluid velocity is due to Niethammer and the second author \cite{NiethammerSchubert19}. In \cite{NiethammerSchubert19}, a similar system to \eqref{eq:u_N} is considered.  Instead of the gravity $g/N $ that acts through the particles on the fluid,
 a right-hand side $f$ is introduced in the Stokes equations.
Under  assumptions \eqref{eq:density.condition} and \eqref{eq:ass.phi.logN} as well as convergence of the empirical density $\rho_N$, it is  shown that $\|u_N - u\|_{L^p} = o(\phi_N)$,
where $u$ solves
\begin{equation} 
\begin{aligned}
\left\{\begin{array}{rl}
-\dv((2+5\phi_N \rho_0) eu)+\nabla p&=(1 - \phi_N) f,\\
\dv u&=0.
\end{array}\right.
\end{aligned}
\end{equation}
This result has been generalized to polydispersed particles of more general shape by Hillairet and Wu in \cite{HillairetWu19}, where they also removed any condition $\phi_N \to 0$.
G\'erard-Varet \cite{Gerard-Varet19} and G\'erard-Varet  and the first author \cite{Gerard-VaretHoefer20} were able to considerably relax the separation condition \eqref{eq:density.condition} allowing to treat a large class of random particle configurations.

For results regarding the analysis of the higher order correction in $\phi$, we refer to the paper by G\'erard-Varet and Hillairet \cite{Gerard-VaretHillairet19}, G\'erard-Varet and Mecherbet \cite{Gerard-VaretMecherbet20} and Duerinckx and Gloria \cite{DuerinckxGloria19,DuerinckxGloria20}.

All the mentioned results on the effective viscosity concern the quasi-static case where only the system \eqref{eq:u_N} is studied without coupling it to the dynamical evolution of the particle positions.

\medskip

\subsection{Main results}
The main result of this paper concerns the coupling of  Einstein's formula for the effective viscosity to the dynamical problem of particle sedimentation. To our knowledge, this is the first rigorous result on the validity of Einstein's formula in the dynamical case.

Our main results are formulated in terms of the $p$-Wasserstein metric $\W_p(\cdot,\cdot)$, $1 \leq p \leq \infty$. For the definition we refer to Appendix \ref{app:Wasserstein}. Classical results on Wasserstein distances can be found for instance in \cite{Santambrogio15}.


Our first result shows that the basic transport-Stokes system \eqref{eq:v.tau.gammastar} approximates the microscopic system well up to an error of order $\phi_N$.   More precisely, to ensure that the leading order error is not due to the difference of $NR$ and $\lim_{N \to \infty} NR$, we consider the more accurate system
\begin{align}  \label{eq:v.tau}
\left\{\begin{array}{rl}
-\Delta v+\nabla p&=\tau g,\\
\dv v&=0,\\
\partial_t \tau +(v + \bra{6 \pi \gamma_N}^{-1} g) \cdot \nabla \tau&=0,\\
\tau(0)&=\rho_0,
\end{array}\right.
\end{align}
where $\gamma_N = NR$. Note that $v,\tau$ implicitly depend on $N$. Also note that for a specific $N$ the constant velocity $(6 \pi \gamma_N)^{-1} g$ could be absorbed by considering a coordinate system that moves with this constant speed.

\begin{thm} \label{th:tau}
Assume that assumptions \eqref{eq:density.condition}, \eqref{eq:ass.phi.logN} hold and let $\rho_0 \in L^\infty(\R^3) \cap \mP(\R^3)$, where $\mP(\R^3)$ denotes the space of probability densities. 
Then, for all $T_\ast>0$ and all $N = N(T_\ast)$ sufficiently large there exists a constant $C$ such that 
\begin{align} \label{eq:min.dist.conserved}
	\dmin(t) \geq \dmin(0) e^{-Ct} \quad \text{for all } t \leq T_\ast,
\end{align}
with $C = C(T_\ast,c)$, where $c$ is the constant from \eqref{eq:separation.condition}.
In particular, the solution to \eqref{eq:u_N} -- \eqref{eq:def.V_i} exists on $(0,T_\ast)$.
Moreover, for all $1 \leq  p \leq \infty$, and all $t \leq T_\ast$
\begin{align} \label{eq:rho_N.tau}
  \W_p (\rho_N(t),\tau(t)) +  \leq C ( \phi_N + \W_p(\rho_N(0), \rho_0)) e^{C t},
\end{align}
where $\tau$ is the unique solution to \eqref{eq:v.tau} and  $C=C(T_\ast,c,\norm{\rho_0}_{L^\infty})$.

Furthermore, for all $q < 3$ and all $p> \max\{1,\frac {3q}{3+q} \}$ , there exists $C=C(T_\ast,c,\norm{\rho_0}_{L^\infty},p,q)$, such that for all $t \leq T_\ast$
\begin{align} \label{eq:tau.v-u_N}
\norm{v(t)-u_N(t)}_{L^q_\loc(\R^3)} \leq C ( \phi_N + \W_p(\rho_N(0), \rho_0)) e^{C t}.
\end{align}
\end{thm}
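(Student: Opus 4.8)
The plan is the following. The system \eqref{eq:v.tau} is a linear transport equation for $\tau$ driven by the divergence-free velocity field $w_t := v(t,\cdot) + (6\pi\gamma_N)^{-1}g$, with $v(t,\cdot) = \Phi*(\tau(t)g)$ and $\Phi$ the Oseen tensor (fundamental solution of the stationary Stokes operator), so that $\abs{\Phi(x)}\lesssim\abs{x}^{-1}$, $\abs{\nabla\Phi(x)}\lesssim\abs{x}^{-2}$ on $\R^3$. Since $\nabla\Phi\in L^1_\loc(\R^3)$ in dimension $3$ and $\tau(t)\in L^1\cap L^\infty$, the field $v(t,\cdot)$ is globally Lipschitz with a constant $L = L(\abs g,\norm{\rho_0}_{L^\infty})$; together with the conservation $\norm{\tau(t)}_{L^r} = \norm{\rho_0}_{L^r}$ along the flow of $w_t$, this yields global well-posedness of \eqref{eq:v.tau} and the quantitative stability of its flow needed below. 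The proof then has four steps: (i) expand the particle velocities $V_i = u_N(X_i)$ by the method of reflections; (ii) propagate \eqref{eq:min.dist.conserved} by a continuity argument; (iii) close a Grönwall estimate for $\W_p(\rho_N,\tau)$ via a generalization of Hauray's mean-field theorem; (iv) deduce \eqref{eq:tau.v-u_N}.

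For (i): as long as $\dmin(t)\gtrsim N^{-1/3}$, the reflection expansion converges uniformly in $N$ (for $N$ large, by \eqref{eq:ass.phi.logN}) and gives
\begin{align}
 V_i = \frac1N\sum_{j\neq i}\Phi(X_i-X_j)\,g \;+\; \frac{1}{6\pi\gamma_N}\,g \;+\; E_i, \qquad \max_i\abs{E_i}\lesssim\phi_N,
\end{align}
where the drift $(6\pi\gamma_N)^{-1}g = (6\pi R)^{-1}g/N$ is exactly the self-mobility of one sphere of radius $R$ under the force $g/N$, which is the origin of the extra velocity in \eqref{eq:v.tau}; the Stokeslet sum equals $(\Phi*(\rho_N - \tfrac1N\delta_{X_i}))(X_i)\,g$, a singular discretisation of $v(X_i) = (\Phi*(\tau g))(X_i)$. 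A related consequence, using \eqref{eq:density.condition}, is the Lipschitz-type bound $\abs{V_i-V_j}\le C\abs{X_i-X_j}$ (for $\dmin\gtrsim N^{-1/3}$) with $C = C(T_\ast,c,\norm{\rho_0}_{L^\infty})$ independent of $N$: the singular parts of the fields generated by the other particles contribute to the difference $V_i-V_j$ only through weighted sums that gain a factor $\abs{X_i-X_j}$ and are bounded by the separation.

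For (ii)–(iii): let $T^\circ\le T_\ast$ be maximal with $\dmin(t)\ge\tfrac12\dmin(0)e^{-2Ct}$ on $[0,T^\circ]$. There the bounds of step (i) hold, whence $\tfrac{\d}{\d t}\log\abs{X_i-X_j}\ge -C$ and $\dmin(t)\ge\dmin(0)e^{-Ct}$, strictly improving the defining inequality; so $T^\circ = T_\ast$, proving \eqref{eq:min.dist.conserved} and, in particular, existence of the microscopic system on $(0,T_\ast)$. For \eqref{eq:rho_N.tau}, let $\psi_t$ be the flow of $w_t$ (so $\tau(t) = (\psi_t)_\#\rho_0$), let $\pi_0 = \tfrac1N\sum_i\delta_{X_i(0)}\otimes\mu_i$ be an optimal $\W_p$-coupling of $\rho_N(0)$ and $\rho_0$, and set $Q(t) := \tfrac1N\sum_i\int\abs{X_i(t)-\psi_t(y)}^p\,\d\mu_i(y)\ge\W_p^p(\rho_N(t),\tau(t))$. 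Writing $V_i - w_t(\psi_t y) = (v(X_i)-v(\psi_t y)) + D_i + E_i$ with $D_i := \tfrac1N\sum_{j\ne i}\Phi(X_i-X_j)g - v(X_i)$, and using $\abs{v(X_i)-v(\psi_t y)}\le L\abs{X_i-\psi_t y}$ and Hölder, one gets
\begin{align}
 \tfrac{\d}{\d t}\,Q(t)^{1/p}\le L\,Q(t)^{1/p} \;+\; \Big(\tfrac1N\sum_i\abs{D_i}^p\Big)^{1/p} \;+\; \max_i\abs{E_i}.
\end{align}
The $\ell^p$-averaged discretisation error $\big(\tfrac1N\sum_i\abs{D_i}^p\big)^{1/p}$ is precisely what the generalization of Hauray's argument controls: with $\norm{\tau(t)}_{L^\infty} = \norm{\rho_0}_{L^\infty}$ and $\dmin(t)\gtrsim N^{-1/3}$ it is $\le C(\norm{\rho_0}_{L^\infty})\big(\W_p(\rho_N(t),\tau(t)) + N^{-1/3}\big)$ (up to logarithmic factors). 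Since every $N$-point configuration satisfies $\W_p(\rho_N(0),\rho_0)\gtrsim\norm{\rho_0}_{L^\infty}^{-1/3}N^{-1/3}$ (the quantization lower bound for an $L^\infty$ density), the sampling term is absorbed, and Grönwall gives \eqref{eq:rho_N.tau} for all $1\le p\le\infty$.

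For (iv): put $v_N := \Phi*(\rho_N g)\in L^q_\loc(\R^3)$ for $q<3$. The method of reflections also gives $\norm{u_N - v_N}_{L^q_\loc}\lesssim\phi_N$, while $v - v_N = \Phi*((\tau-\rho_N)g)$. As $\nabla\Phi$ is, modulo a bounded part, the Riesz potential of order $1$ on $\R^3$, and $\W_p$ controls $\rho_N - \tau$ in the local dual Sobolev norm $W^{-1,p}_\loc$ (again using $\tau\in L^\infty$ and $\dmin\gtrsim N^{-1/3}$ for the atomic part), the embedding $W^{1,p}\hookrightarrow L^{p^\ast}$, $p^\ast = \tfrac{3p}{3-p}$ for $p<3$, gives $\norm{v - v_N}_{L^q_\loc}\lesssim\W_p(\rho_N,\tau)$ precisely for $q<p^\ast$, i.e. $p>\max\{1,\tfrac{3q}{3+q}\}$ (and any $q<3$ if $p\ge3$); combining with \eqref{eq:rho_N.tau} gives \eqref{eq:tau.v-u_N}. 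The main obstacle throughout is the criticality of the Oseen singularity $\abs x^{-1}$ (equivalently $\abs x^{-2}$ for $\nabla\Phi$) in dimension $3$, lying exactly at the edge of Hauray's method: the discretisation error must be absorbed into the Wasserstein distance with only the $N^{-1/3}$-separation available, forcing logarithmic losses and essential use of $\norm{\tau}_{L^\infty}$; a secondary point is the circular dependence between the reflection estimates and the propagation of $\dmin$, resolved by the continuity argument in step (ii).
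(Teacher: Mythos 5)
Your overall architecture --- method of reflections giving $V_i=\frac1N\sum_{j\neq i}\Phi(X_i-X_j)g+\frac{g}{6\pi NR}+E_i$ with $\sup_i|E_i|\lesssim\phi_N$, a bootstrap for $\dmin$, a Hauray-type Gr\"onwall for the Wasserstein distance, and the three-term decomposition of $v-u_N$ --- matches the paper. The genuine gap is in your step (iii): you run the Gr\"onwall directly in $\W_p$ against $\tau$ and assert that the averaged discretisation error $\big(\frac1N\sum_i|D_i|^p\big)^{1/p}$, with $D_i=(\Phi g\ast(\rho_N-\tau))(X_i)$ (self-term removed), is bounded by $C(\W_p(\rho_N(t),\tau(t))+N^{-1/3})$ up to logarithms. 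This is precisely the step that neither Hauray's argument nor its generalization here (Theorem \ref{th:mean.field.general} together with Lemma \ref{lem:sums.Wasserstein}) provides for finite $p$: both control $K\ast(\rho_N-\tau)$ pointwise through the \emph{infinite} Wasserstein distance, because (a) the near-field count of particles at distance $\le\lambda$ from $X_i$ is obtained by pushing a ball of radius comparable to $\lambda+\W_\infty$ forward under an $\W_\infty$-optimal map, which breaks down once mass may be transported arbitrarily far (as a finite $\W_p$ permits), and (b) the H\"older estimate of the far-field term produces $\int|X_i-y|^{-2p'}\tau(y)\,\dd y$, which is finite only for $p>3$. As written, your key inequality is unproved, and I do not see how to obtain it with the tools available.

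The paper avoids this entirely: the discrete--continuous comparison is carried out only in $\W_\infty$, but against the solution $\bar\tau_N$ of \eqref{eq:v.tau} with the mollified initial datum $\bar\rho_N^0$ from \eqref{eq:rho.bar}. By \eqref{eq:density.condition} this datum is uniformly bounded in $L^\infty$ and satisfies $\W_\infty(\bar\rho_N^0,\rho_N(0))\approx N^{-1/3}\lesssim\W_p(\rho_N(0),\rho_0)$ thanks to \eqref{eq:discretization.error}; the finite-$p$ statement \eqref{eq:rho_N.tau} then follows from $\W_p(\rho_N,\tau)\le\W_\infty(\rho_N,\bar\tau_N)+\W_p(\bar\tau_N,\tau)$ together with the $\W_p$-stability of the continuous system (Theorem \ref{th:hoefer}). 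Without some such intermediate object your argument cannot even start for finite $p$, since $\W_\infty(\rho_N(0),\rho_0)$ is not controlled by the hypotheses (it may be infinite). A secondary, smaller issue is in step (iv): Loeper-type duality ($\W_p$ controls the $W^{-1,p}$ norm) requires both densities to lie in $L^\infty$ and does not apply to the atomic $\rho_N$; the paper instead bounds $\Phi g\ast(\bar\rho_N-\rho_N)$ pointwise by \eqref{eq:estimate.convolutions} and integrates the residual singularity $\dist(x,\{X_i\}_i)^{-1}$ in $L^q_\loc$ for $q<3$.
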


Theorem \ref{th:tau} is a quantitative version of the convergence result obtained by the first author in \cite{Hofer18MeanField}.
A similar quantitative result has already been proven in \cite{Mecherbet19}. We emphasize, though, that we cannot just apply the results in \cite{Mecherbet19} since they require $NR$ to be sufficiently small in our setting. In fact, as we will discuss below in more detail,
we are mainly interested in the case $NR \to \infty$; otherwise the discretization error turns out to be always larger than $\phi$ and thus dominant over the effect of the increase of the viscosity.

Note that we imagine our continuous densities to be limits of empirical distributions and we therefore assume all appearing densities to be probability distributions. All our results though, can, without effort, be generalized to distributions of general (but finite) mass. 
\medskip

We now state the main result of the paper, that compares $\rho_N$ with the solution $\rho_\eff$ of the effective macroscopic system \eqref{eq:rho_eff.u_eff}.

\begin{thm} \label{th:main}
Assume that
\eqref{eq:density.condition}, \eqref{eq:ass.phi.logN} hold and let $\rho_0 \in W^{1,1}(\R^3) \cap W^{1,\infty}(\R^3) \cap \mP(\R^3)$.
Then, for all $T_\ast>0$,  all $N = N(T_\ast)$ sufficiently large, for all $ 1 \leq p < \infty$, and all $t \leq T_\ast$, 
\begin{align} \label{eq:rho_N.rho_eff}
\begin{aligned}
  &\W_p (\rho_N(t),\rho_\eff(t))\\
  &\leq C \left( \phi_N^2 |\log \phi_N| +  \phi_N \W_\infty(\rho_N(0),\rho_0) |\log \W_\infty(\rho_N(0),\rho_0)| + \W_p(\rho_N(0), \rho_0) \right) e^{C t},
\end{aligned}
\end{align}
where  $C=C(T_\ast,c,p,\norm{\rho_0}_{W^{1,1}\cap W^{1,\infty}})$
and  $u_\eff, \rho_\eff$ is the unique solution to
\begin{align} \label{eq:rho_eff.u_eff}
\left\{\begin{array}{rl}
-\dv((2+5\phi_N \rho_\eff)e u_\eff)+\nabla p&=\rho_\eff g,\\
\dv u_\eff&=0,\\
\partial_t \rho_\eff+\bra{u_\eff+(6\pi \gamma_N)^{-1}g}\cdot \nabla \rho_\eff&=0, \\
\rho_\eff(0) &= \rho_0,
\end{array}\right.
\end{align}
where $\gamma_N=RN$.

Furthermore,  there exists a constant  $C=C(T_\ast,c,\norm{\rho_0}_{W^{1,1}\cap W^{1,\infty}},p,q)$, such that for all $q < 3$, all $p>  \max\{1,\frac {3q}{3+q} \}$ and all $t \leq T_\ast$
\begin{align} \label{eq:main.u_eff-u_N}
&\norm{u_\eff(t)-u_N(t)}_{L^q_\loc(\R^3)} \\
  &\leq C \left( \phi_N^2 |\log \phi_N| +  \phi_N \W_\infty(\rho_N(0),\rho_0) |\log \W_\infty(\rho_N(0),\rho_0)| + \W_p(\rho_N(0), \rho_0) \right) e^{C t}. 
\end{align}
\end{thm}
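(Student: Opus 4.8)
The plan is to prove Theorem~\ref{th:main} as a higher-order refinement of Theorem~\ref{th:tau}: one pushes the method of reflections beyond the point-force (Stokeslet) level in order to resolve the leading-order correction to $u_N$ produced by the \emph{rigidity} of the particles --- which is exactly Einstein's stresslet term, and which, summed over the $O(N)$ particles, renormalizes the viscosity to $2+5\phi_N\rho_N$ --- and then to run a quantitative mean-field stability argument in Wasserstein metrics against the effective system~\eqref{eq:rho_eff.u_eff}, in the spirit of Hauray but adapted to the density-dependent Stokes operator.

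\emph{Separation and reflections.} As in Theorem~\ref{th:tau}, one first establishes \eqref{eq:min.dist.conserved}: so long as the configuration stays well separated, $V_i=u_N(X_i)$ is Lipschitz in the centres at the relevant scale, so a Gronwall estimate on $t\mapsto\min_{i\ne j}\abs{X_i-X_j}$ gives $\dmin(t)\ge\dmin(0)e^{-Ct}$, which bootstraps to existence on $(0,T_\ast)$ and keeps $R/\dmin(t)$ small uniformly on $[0,T_\ast]$. With this in hand, the method of reflections produces a two-term approximation $u_N\approx u_N^{(0)}+\sum_i S_i$, where $u_N^{(0)}$ is the superposition of the single-particle fields carrying the prescribed net forces $\tfrac1N g$ (Stokeslet-type, $\sim\abs{x}^{-1}$) and $S_i$ is the stresslet field ($\sim\abs{x}^{-2}$) restoring, to leading order, the rigidity constraint $eu_N=0$ on $B_i$ against the ambient strain. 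The decisive quantitative point is that, thanks to the separation bound, the successive reflections form a geometric-type series whose remainder --- after the stresslet level, and once the relevant many-particle sums are carried out (a gain of a factor $\phi_N^2$ accompanied by an extra logarithm from a borderline sum such as $\sum_{j\ne i}\abs{X_i-X_j}^{-3}$ truncated at scale $\dmin$) --- is of size $\phi_N^2\abs{\log\phi_N}$ on the particles. This identifies the particle dynamics, to that precision, with a discretization of the effective flow driven by the empirical density and the viscosity $2+5\phi_N\rho_N$, together with the self-drag term $(6\pi\gamma_N)^{-1}g$.

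\emph{Wasserstein stability.} Next one closes a Gronwall estimate for $t\mapsto\W_p(\rho_N(t),\rho_\eff(t))$, $1\le p<\infty$, by comparing the vector field transporting $\rho_N$ with the one transporting $\rho_\eff$. Their difference splits into: (i) the reflection error, of order $\phi_N^2\abs{\log\phi_N}$; (ii) the mean-field error replacing the Stokeslet and stresslet sums over $\{X_j\}$ by the corresponding Stokes fields generated by $\rho_\eff$, handled via the generalization of Hauray's $\W_\infty$ estimate --- for the Stokeslet kernel this can be controlled additively in $\W_p$, producing the term $\W_p(\rho_N(0),\rho_0)$, whereas for the borderline stresslet kernel $\sim\abs{x}^{-2}$ in $\R^3$ one is forced to work in $\W_\infty$ and picks up $\phi_N\,\W_\infty(\rho_N(0),\rho_0)\,\abs{\log\W_\infty(\rho_N(0),\rho_0)}$, the prefactor $\phi_N$ being the size of the stresslet correction and the logarithm the borderline character of the kernel; and (iii) the stability of the effective Stokes operator in its density argument, $\norm{u_\eff[\rho]-u_\eff[\rho']}\lesssim\W_p(\rho,\rho')$, which follows from Stokes regularity since $5\phi_N\rho$ is a small Lipschitz perturbation of the constant-viscosity operator. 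Using $\rho_0\in W^{1,1}\cap W^{1,\infty}$ for the regularity of $u_\eff$ (hence Lipschitz bounds on the limiting flow), and controlling the $\W_\infty$-sensitive part by $\W_\infty$ alone before passing to $\W_p\le\W_\infty$ for the remainder, yields \eqref{eq:rho_N.rho_eff}.

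\emph{Velocity estimate and main obstacle.} For \eqref{eq:main.u_eff-u_N}, both $u_N$ --- through its reflection approximation --- and $u_\eff$ are, up to $O(\phi_N^2\abs{\log\phi_N})$, Stokes fields generated by $\rho_N$, resp.\ $\rho_\eff$, with the near-constant viscosity $2+5\phi_N\rho$; since the Oseen tensor $\sim\abs{x}^{-1}$ maps differences of probability measures continuously into $L^q_\loc(\R^3)$ for $q<3$ when tested against $W^{1,p'}$-regular objects with $p>\max\{1,\tfrac{3q}{3+q}\}$, the bound follows from \eqref{eq:rho_N.rho_eff} together with Calder\'on--Zygmund/Stokes estimates absorbing the $\phi_N\rho$-perturbation of the coefficients. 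The main obstacle is the interlocking of the first two steps: making the method of reflections quantitative to precision $\phi_N^2\abs{\log\phi_N}$ \emph{uniformly in $t\le T_\ast$}, which demands sharp control of the many-particle stresslet sums and their further reflections (the borderline lattice sums truncated at $\dmin$) under only \eqref{eq:density.condition} and \eqref{eq:ass.phi.logN}; and dovetailing this with a Hauray-type $\W_\infty$ stability estimate that must now tolerate a transporting velocity defined through a density-dependent, variable-coefficient Stokes system rather than a plain convolution. The interplay of these two error mechanisms --- together with the unavoidable appearance of $\W_\infty$ and its logarithm from the $\abs{x}^{-2}$ stresslet kernel --- is what dictates the precise form of the right-hand sides in \eqref{eq:rho_N.rho_eff} and \eqref{eq:main.u_eff-u_N}.
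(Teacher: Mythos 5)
Your high-level picture is right --- method of reflections to two orders (Stokeslet then stresslet), Gronwall in Wasserstein, Stokes regularity for the variable-viscosity operator --- but there is a structural gap that would make the proof as sketched break down at the stresslet level.

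You propose to treat the stresslet contribution as another singular kernel fed into "the generalization of Hauray's $\W_\infty$ estimate", picking up a log because the kernel is borderline. But the stresslet kernel $e\Phi$ scales like $\abs{x}^{-2}$ in $\R^3$, i.e.\ has exponent $\alpha=d-1$, which is exactly the exponent excluded by the mean-field theorem (Theorem~\ref{th:mean.field.general} requires $\alpha<d-1$) --- and indeed the paper remarks that mean-field limits at this critical exponent are only available under structural assumptions which $e\Phi$ does \emph{not} satisfy. The paper sidesteps this by a two-step decoupling that you do not anticipate. First, one proves Theorem~\ref{th:tau} and so has the closeness $\W_\infty(\rho_N(t),\tau(t))\lesssim(\phi_N+\W_\infty(\rho_N(0),\rho_0))e^{Ct}$ to the solution $\tau$ of the \emph{constant}-viscosity system~\eqref{eq:v.tau}. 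Second, one introduces the intermediate model~\eqref{eq:u.rho}, in which the Einstein term is \emph{frozen} against the already-known $\tau$: $-\dv(2eu+5\phi_N\tau\,ev)+\nabla p=\rho g$. Correspondingly, at the discrete level the double stresslet sum $\tfrac{5\phi_N}{N^2}\sum_{j\ne i}\sum_{k\ne j}e\Phi(X_i-X_j)e\Phi(X_j-X_k)g$ is replaced by the smooth convolution $5\phi_N(e\Phi\ast(\tau(e\Phi g\ast\tau)))(X_i)$, and this replacement is estimated \emph{directly} (not via the mean-field machinery) using the known $\W_\infty(\rho_N,\tau)$ bound; the $\abs{x}^{-3}$ integral over the annulus $\{\abs{x}>2\W_\infty\}$ is what produces the $\W_\infty\abs{\log\W_\infty}$ factor. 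Only after this replacement is the Hauray-type theorem invoked, and then only with $K=\Phi g$ (Stokeslet, $\alpha=1$, safely subcritical) and with the stresslet convolution absorbed into a Lipschitz external velocity field $\varphi_N$. Finally, Proposition~\ref{pr:rho.rho_eff} closes the gap between the intermediate model and the genuine effective system~\eqref{eq:rho_eff.u_eff}, contributing the final $O(\phi_N^2)$.

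In short: your outline silently relies on a mean-field estimate at the critical stresslet exponent, which is not available. The missing idea is the intermediate system~\eqref{eq:u.rho}, which linearizes the Einstein correction against the known density $\tau$ so that the only singular interaction fed into the Hauray argument remains the subcritical Stokeslet, while the stresslet discretization error is bounded separately using Theorem~\ref{th:tau}.
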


\smallskip

Several comments are in order.

First, we emphasize the perturbative nature of our result. Imposing assumption \eqref{eq:ass.phi.logN}, we can only treat the case when the  particle volume fraction $\phi_N$ vanishes sufficiently fast. The disturbance of the fluid flow that is responsible for the increase in viscosity is very singular (like $\abs{x}^{-2}$) thus we are unable to control the interparticle distance for finite $\phi_N$ even for very short times. We will further comment on this limitation in Section \ref{sec:MoR}. We overcome this problem for $\phi_N\to 0$ by controlling sums $R^3 \abs{X_i - X_j}^{-3}$
due to \eqref{eq:ass.phi.logN} and by exploiting the convergence from Theorem \ref{th:tau}. Note that the relevant interaction for Theorem \ref{th:tau} is less singular (like $\abs{x}^{-1}$). 

The singular nature of the interaction is also responsible for the appearance of the logarithmic correction term in the error $\phi^2 |\log \phi|$ in \eqref{eq:rho_N.rho_eff}.
According to the results on the second order corrections of the effective viscosity
\cite{Gerard-VaretHillairet19,Gerard-VaretMecherbet20} one should expect that the optimal error is $\phi^2$. We remark that the other error term $\phi_N \W_\infty(\rho_N(0),\rho_0) |\log \W_\infty(\rho_N(0),\rho_0)|$ can be absorbed by Young's inequality into the other two terms on the right-hand side of \eqref{eq:rho_N.rho_eff} if the initial data are well-prepared, e.g., if $W_\infty(\rho_N(0),\rho_0) \lesssim W_p(\rho_N(0),\rho_0) \to 0$

\smallskip

Next, we comment on the fact that the discretization in general imposes a constraint on the rate of convergence of the initial distributions $\rho_N \to \rho_0$. Indeed, let $\Gamma(\rho_0,\rho_N(0))$ be the set of all couplings between $\rho_0,\rho_N(0)$ (see Appendix or \cite{Santambrogio15}). Then
\begin{align} \label{eq:discretization.error}
\begin{aligned}
	\W_p(\rho_N(0),\rho_0) 
	&= \inf_{\gamma \in \Gamma(\rho_0,\rho_N(0))} \left(\int_{\R^3 \times \R^3} |x-y|^p \dd \gamma(x,y)\right)^{1/p} \\
	&\geq c N^{-1/3} \left(\int_{\R^3\setminus \cup_i B_{c N^{-1/3}}(X_i)} \rho_0(x) \dd x \right)^{1/p} \geq c' N^{-1/3}
	\end{aligned}
\end{align}
for $c,c'$ sufficiently small depending only on $\|\rho_0\|_\infty$.
Thus, in order that the discretization error is not dominant over the effect of the Einstein correction,
a necessary condition is $N^{-1/3} \leq \phi_N$.
We emphasize that our assumptions allow this condition to be satisfied since \eqref{eq:ass.phi.logN} allows $\phi_N$ to vanish very slowly. 
Note that $N^{-1/3} \leq \phi_N$ in particular implies
$\gamma_N^{-1} \sim (N R)^{-1} = \phi_N^{-1/3} N^{-2/3} \leq N^{- 5 /9}$. Thus, the self-interaction term always vanishes in the limit $N \to \infty$ in this case.


We also remark that it is possible to slightly refine the estimates in  \eqref{eq:rho_N.tau} and \eqref{eq:rho_N.rho_eff}. In particular, it is possible to derive an estimate of the form 
\begin{align}
	W_p(\rho_N(t),\tau(t)) \leq W_p(\rho_N(0),\rho_0) e^{Ct} + (C \phi_N + O(N^{-1/3}))(e^{Ct} - 1),
\end{align}
and similarly for $W_p(\rho_N(t),\rho_\eff(t))$, such that equality holds if one evaluates at $t = 0$. For the sake of simplicity of the presentation, we refrain from providing these estimates. Instead, relying on \eqref{eq:discretization.error} and assumption \eqref{eq:density.condition}, we choose to absorb the errors $O(N^{-1/3})$ in the statement of our results. 

Furthermore, it is also possible, to give estimates for the fluid velocities in \eqref{eq:tau.v-u_N} and \eqref{eq:main.u_eff-u_N} for $q > 3$. The reason why we choose to give the statement for $q < 3$ only is due to  the error coming from the self-interaction.
For $q < 3$, this error can be absorbed because of the integrability of $1/|x|^q$.
On the other hand, for $q > 3$, one would need to add an error $\phi_N^{1/q} (NR)^{-1}$ on the right-hand side.

\smallskip

Finally, we emphasize that to first order in $\phi_N$, the (time-dependent) effective viscosity is  fully captured by the particle density $\tau$ that solves \eqref{eq:v.tau}. 
Indeed, as Theorems \ref{th:tau} and \ref{th:main} suggest,
$(\tau - \rho_\eff) \sim \phi_N$ and thus 
\begin{align}
	\mu_\eff := 1 + 5/2 \phi_N \rho_\eff  =  1 + 5/2 \phi_N \tau + O(\phi_N^2).
\end{align}
In fact, for proving Theorem \ref{th:main}, we will introduce the following intermediate model, which approximates \eqref{eq:rho_eff.u_eff} up to errors  of order $\phi_N^2$:
\begin{equation} \label{eq:u.rho}
\begin{aligned}
\left\{\begin{array}{rl}
-\dv(2 e u +5\phi_N \tau ev)+\nabla p&=\rho g,\\
\dv u&=0,\\
\partial_t \rho+\bra{u+(6\pi \gamma_N)^{-1}g}\cdot \nabla \rho&=0, \\
\rho(0) &= \rho_0.
\end{array}\right.
\end{aligned}
\end{equation}
We emphasize that the restriction $p<\infty$ in Theorem \ref{th:main} does not come from a lack of control for the transition from the microscopic to the macroscopic model but from the lack of an $L^\infty$-theory for the Stokes equation which is needed for comparing the intermediate problem \eqref{eq:u.rho} with the effective model \eqref{eq:rho_eff.u_eff}. 

\subsection{Outline of the paper}

The basic strategy of the proof of both Theorem \ref{th:tau} and Theorem \ref{th:main} is almost the same and mainly consists in two steps.
First, we show, that the microscopic dynamics \eqref{eq:u_N}--\eqref{eq:def.V_i} can be approximated up to an error of order $\phi_N$, respectively $\abs{\log \phi_N}\phi_N^2$, by an explicit system of two-particle interactions. Indeed, one difficulty of the analysis of the microscopic system is that the particle velocities $V_i$ are only given implicitly through the solution of the boundary value problem \eqref{eq:u_N}.
Based on such an explicit approximation, we then prove in a second step the convergence to its mean-field limit in Wasserstein metrics. 

For the proof of Theorem \ref{th:tau}, the approximation 
that we obtain in the first step reads
\begin{align} \label{eq:V_i.strategy1}
	V_i = \frac{g}{6 \pi N R} +  \sum_{j \neq i} \Phi(X_i - X_j)g +  E_i.
\end{align}
Here $\Phi$ is the fundamental solution of the Stokes equations,
\begin{align} \label{eq:Phi}
	\Phi(x) = \frac{1}{8\pi} \bra{\frac{\Id}{\abs{x}}+\frac{x\otimes x}{\abs{x}^3}},
\end{align}
 and $E_i$ is an error term which is of order $\phi_N$
as long as we have good control over the particle configuration.


In the case of the proof of Theorem \ref{th:main}, the approximation for the particle velocities has to be refined.
Here, we show instead of \eqref{eq:V_i.strategy1} that
\begin{align} \label{eq:V_i.strategy2}
	V_i = \frac{g}{6 \pi N R}  + \sum_{j \neq i} \Phi(X_i - X_j)g - 5 \phi_N (e \Phi \ast (\tau (e \Phi g \ast \tau)))(X_i) +  \bar E_i,
\end{align}
where $\bar E_i$ is an error term which is of the order $O(\phi_N^2 | \log \phi_N|)$. Here, we exploit that we already know from Theorem \ref{th:tau} that $\rho_N$ is well approximated by $\tau$ up to an error of order $\phi_N$.
The (formal) mean-field limit of \eqref{eq:V_i.strategy2} is given by system \eqref{eq:u.rho}.
Thus, in a final step of the proof of Theorem \ref{th:main}, we need to show that the solution to \eqref{eq:u.rho} is close to the solution of  system \eqref{eq:rho_eff.u_eff}. 

\medskip

The rest of the paper is organized as follows.
In Section \ref{sec:Strategy}, we explain in more detail the outline of the proof of the main results. In Section \ref{sec:MoR}, we review results based on the so-called method of reflections which enable us to obtain good approximations for the particle velocities. 
In Section \ref{sec:meanfield}, 
we state an abstract result (Theorem \ref{th:mean.field.general}), which will enable us to obtain the desired estimates in the Wasserstein metric for $(\rho_N,\tau)$ and $(\rho_N,\rho)$. This theorem is a generalization of a classical result on mean-field limits by Hauray \cite{Hauray09}. We are able to relax the assumption on the particle initial configuration and to include error terms $E_i$ as above. We therefore believe that this result might be of independent interest.
Finally, in Section \ref{sec:transition}, we outline how to estimate the difference between solutions to \eqref{eq:u.rho} and \eqref{eq:rho_eff.u_eff}.
In this section, we also state necessary well-posedness and regularity results for systems \eqref{eq:v.tau}, \eqref{eq:u.rho} and \eqref{eq:rho_eff.u_eff}.

In Section \ref{sec:meanfieldproof}, we prove the abstract convergence result, Theorem \ref{th:mean.field.general}.

Section \ref{sec:proof.main} contains the proof of the main results  Theorems \ref{th:tau} and \ref{th:main}.

Finally, in Section \ref{sec:continuous}, we give the proofs of the results for the macroscopic equations stated in Section \ref{sec:transition}.  For this, we rely on stability estimates for the systems  \eqref{eq:v.tau}, \eqref{eq:u.rho} and \eqref{eq:rho_eff.u_eff} in Wasserstein spaces and on DiPerna-Lions theory for transport equations \cite{DiPernaLions89}.


\section{Outline of the proof of the main results} \label{sec:Strategy}

\subsection{Explicit approximations of the particle interactions by the method of reflections}\label{sec:MoR}

For the approximation of the microscopic dynamics by explicit two-particle interactions, we rely on the so-called method of reflections.
Variants of this method have been used for related problems, 
notably homogenization problems of Poisson and Stokes equations,
in particular results on the effective viscosity as well as problems of particle sedimentation in \cite{FigariOrlandiTeta85, Rubinstein1986, JabinOtto04,HoferVelazquez18, Hofer18MeanField, Mecherbet19, NiethammerSchubert19,HillairetWu19,Gerard-VaretHillairet19,Hoefer19}.
We will here only give a brief introduction of the method and state the necessary results that we will apply.
For more details, we refer to \cite{Hoefer19} and the references therein.

The method of reflections yields a series expansion for the solution of boundary value problems such as the solution $u_N$ to \eqref{eq:u_N} in terms of the solution operators of single particle problems. The series is convergent 
for sufficiently dilute particle configurations. In our setting, such diluteness is provided by assumptions \eqref{eq:density.condition} and \eqref{eq:ass.phi.logN}.
Note that these two conditions imply  
	\begin{align} \label{eq:c_0}
		c_0 := \frac{R^3}{\dmin(0)^3} \to 0.
	\end{align}
In particular, for $N$ sufficiently large, the particles cannot overlap and 
\begin{align} \label{eq:separation.condition}
	B_{2 R}(X_i(0)) \cap B_{2 R}(X_j(0)) = \emptyset \qquad \text{for  all } i \neq j.
\end{align} 

To apply the method of reflections, we begin by defining the solution to the problem where only one particle is present,
\begin{align} \label{eq:w_N}
	\left\{ \begin{array}{rcl}
		- \Delta w_N + \nabla p &=& \frac{g}{N} \delta_{\partial B_R(0)},  \\
		\dv w_N &=& 0,
	\end{array} \right.
\end{align}
where $\delta_{\partial B_R(0)}=\abs{\partial B_R(0)}^{-1}\cH^2_{|\partial B_R(0)}$. Note that $w_N$ is defined in such a way that the stress condition $\int_{\partial B_R(0)}\sigma[w_N]n=\frac g N$ is satisfied.
Then, as a zero-order approximation for $u_N$, we take the sum of the one-particle solutions
\begin{align}
	v^{(0)}_N(x) := \sum_i w_N(x - X_i).
\end{align}
We observe that $v^{(0)}_N$ satisfies \eqref{eq:u_N} except for the second line, the constraint that the velocity field is a rigid body motion at the particles.

The method of reflections now consists in adding corrections to $v^{(0)}$ in order to fulfill this constraint. To this end, one defines the correction needed for particle $i$ by the operator $Q_i$
through the problem
\begin{align} 
	\left\{\begin{array}{rcll}
		- \Delta Q_i \varphi + \nabla p &=& 0, \quad \dv Q_i \varphi = 0 \quad &\text{in } \R^3 \setminus \overline{B_i}, \\
		e Q_i \varphi &=& e  \varphi \quad &\text{in } \overline{B_i}.
	\end{array} \right.
\end{align}
Then, the $k$-th order approximation through the method of reflections reads
\begin{align} \label{eq:def.v^k}
	v^{(k)}_N := (1- \sum_i Q_i)^k v^{(0)}_N.
\end{align}

We will rely on the convergence result from \cite{Hoefer19} 
under a smallness condition of $c_0$ from \eqref{eq:c_0}
	and the condition that for $q < 3/2$
	\begin{align} \label{eq:lambda_q}
		\lambda_{q} := \sup_{i} \sum_{j \neq i} \frac{R^3}{|X_i - X_j|^{2 q}} < \infty.
	\end{align}
\begin{thm}[{\cite[Corollary 2.7]{Hoefer19}}] \label{th:reflections}
	Let $1< r < 3 < q < \infty$.
	 Then, there exists $\bar c_0 > 0$ depending only on $q$ such that for all 
	$c_0 < \bar c_0$ defined as in \eqref{eq:c_0} and all $k \in \N$
	\begin{align}	\label{eq:L^infty.convergence}
	 	\|v^{(k)}_N - u_N\|_{L^\infty(\R^3)} \leq C (R^\alpha + \lambda_{q'}^{1/q'}) (C c_0)^k \|e v^{(0)}_N\|_{L^q(\cup_i B_i)},
	\end{align}
	where $\alpha = 1 - 3/q$ and $C$ depends only on $q$.
\end{thm}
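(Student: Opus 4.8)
\medskip
\noindent\emph{Sketch of the argument (following \cite{Hoefer19}).}
The plan is to turn the method of reflections into a contraction and, at the very end, to convert control of the particles' strain data into an $L^\infty$ bound on the whole velocity field. First I would reformulate: writing $S := \Id - \sum_i Q_i$, so that $v^{(k)}_N = S^k v^{(0)}_N$, I claim $u_N$ is a fixed point of $S$. Indeed, since $e u_N = 0$ in each $\overline{B_i}$ and $Q_i$ produces only a force- and torque-free correction (the data prescribed in the $Q_i$-problem is merely an interior strain), the decaying Stokes field that $Q_i$ assigns to $u_N$ is rigid on $B_i$ with zero drag and torque, hence vanishes; so $Q_i u_N = 0$ and $S u_N = u_N$. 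Therefore
\begin{align*}
 v^{(k)}_N - u_N = S^k v^{(0)}_N - S^k u_N = S^k\psi, \qquad \psi := v^{(0)}_N - u_N,
\end{align*}
and, because $e u_N = 0$ on the particles, $e\psi = e v^{(0)}_N$ on $\bigcup_i B_i$, while $\psi$ is force- and torque-free on every $B_i$ (inherited from $v^{(0)}_N$ and $u_N$). Everything then reduces to showing that $S$ contracts $\psi$ geometrically in a norm that measures the particles' strain data and that controls $\|\cdot\|_{L^\infty(\R^3)}$ up to the factor $C(R^\alpha + \lambda_{q'}^{1/q'})$.

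Next I would set up the reflection recursion on the strains restricted to the particles. For a Stokes field $\varphi$ outside $\bigcup_i B_i$ one has, on each $B_j$, $e(S\varphi) = e\varphi - \sum_i e(Q_i\varphi) = -\sum_{i\neq j} e(Q_i\varphi)$, since $e(Q_j\varphi) = e\varphi$ on $B_j$; moreover $Q_i\varphi$ depends only on $e\varphi|_{B_i}$. So, setting $E^{(k)}_i := e(S^k\psi)|_{B_i}$, the sequence $(E^{(k)}_i)_i$ evolves under the off-diagonal reflection matrix. The analytic input needed is the decay of a single reflection: $Q_i\varphi$ carries no Stokeslet and no rotlet (by the above normalization and by symmetry), so it decays like $|x-X_i|^{-2}$ and its strain like $|x-X_i|^{-3}$; combined with interior Stokes estimates on the pairwise disjoint shells $B_{2R}(X_i)\setminus B_R(X_i)$ (disjoint by \eqref{eq:separation.condition} once $c_0$ is small) and the fact that the Morrey embedding $L^q\hookrightarrow L^\infty$ costs $R^\alpha$ at scale $R$ (this is where $q>3$, i.e. $\alpha = 1-3/q>0$, is used), one obtains bounds of the form
\begin{align*}
 \|e(Q_i\varphi)\|_{L^q(B_j)} \lesssim \frac{R^3}{|X_i-X_j|^{3}}\,\|e\varphi\|_{L^q(\tilde B_i)}, \qquad \|Q_i\varphi\|_{L^\infty(B_{2R}(X_i))} \lesssim R^\alpha\,\|e\varphi\|_{L^q(\tilde B_i)},
\end{align*}
with $\tilde B_i$ a slight dilation of $B_i$.

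Then I would carry out the summation and the $L^\infty$-reconstruction. Splitting the kernel $R^3|X_i-X_j|^{-3}$ by H\"older so as to expose the subcritical exponent $2q'<3$ (where $\lambda_{q'}$ lives) and estimating the remaining near/self block by the sparsity of the configuration at scale $\dmin(0)$, one shows that, for $c_0 < \bar c_0(q)$, $S$ contracts the relevant $\ell^q$-type norm of $(E^{(k)}_i)_i$ with factor $\le C c_0$; iterating and using $\big(\sum_i\|E^{(0)}_i\|_{L^q(B_i)}^q\big)^{1/q} = \|e v^{(0)}_N\|_{L^q(\cup_i B_i)}$ yields the factor $(Cc_0)^k$. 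Finally, writing $v^{(k)}_N - u_N = S^k\psi$ as a convergent superposition of the force/torque-free multipole fields driven by the data $(E^{(k)}_i)_i$ — each of size $\lesssim R^\alpha\|E^{(k)}_i\|_{L^q}$ on $B_{2R}(X_i)$ and $\lesssim R^{3/q'}|x-X_i|^{-2}\|E^{(k)}_i\|_{L^q}$ away from it — and summing the $O(1)$ near terms and the far terms by H\"older against the $|x|^{-2}$ velocity kernel (again producing $\lambda_{q'}^{1/q'}$), one gets the claimed $L^\infty$ estimate. The auxiliary exponent $r\in(1,3)$ enters only in placing the exterior Stokes fields and $w_N$ in $\dot W^{1,r}(\R^3)$, so that their velocities are well-defined and decaying.

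The step I expect to be the main obstacle is the summation estimate: the strain of a reflection decays like $|x-X_i|^{-3}$, a kernel that is exactly critical against the three-dimensional particle density, so a crude triangle-inequality summation loses (at least) a logarithm; getting a clean geometric factor requires carefully isolating the near block — which alone carries the small parameter $c_0$ and forces the threshold $\bar c_0$ depending on $q$ — from the tail, which is absorbed into the prefactor through $\lambda_{q'}$ (allowed to be merely finite, not small). A secondary difficulty is that the reflected fields are not smooth across $\partial B_i$ — the reflections are built to create stress jumps there — so the interior regularity must be run on the shells $B_{2R}(X_i)\setminus B_R(X_i)$ rather than on the balls, and the first reflection already acts on $v^{(0)}_N$, whose strain on the particles is only the raw $L^q$ datum, which is why all subsequent bounds are measured against $\|e v^{(0)}_N\|_{L^q(\cup_i B_i)}$.
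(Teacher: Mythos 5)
The paper does not prove Theorem~\ref{th:reflections}: it is a verbatim citation of \cite[Corollary~2.7]{Hoefer19} and is used as a black box in the sequel. There is therefore no ``paper's own proof'' to compare against; the relevant comparison would be with the argument in \cite{Hoefer19}, which you outline in spirit but which the present paper neither reproduces nor sketches.

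As a blind reconstruction your outline gets the architecture right: the fixed-point identity $Su_N=u_N$ (using $eu_N=0$ on each $\overline{B_i}$ and that $Q_i$ is force/torque-free), the reduction to $v^{(k)}_N-u_N=S^k\psi$ with $e\psi|_{\cup B_i}=ev^{(0)}_N|_{\cup B_i}$, the single-reflection decay estimates, and the two-stage split into a contraction on strain data followed by an $L^\infty$-reconstruction costing $R^\alpha+\lambda_{q'}^{1/q'}$. The dimensional bookkeeping in the displayed one-step bounds $\|e(Q_i\varphi)\|_{L^q(B_j)}\lesssim R^3|X_i-X_j|^{-3}\|e\varphi\|_{L^q(\tilde B_i)}$ and $\|Q_i\varphi\|_{L^\infty(B_{2R}(X_i))}\lesssim R^\alpha\|e\varphi\|_{L^q(\tilde B_i)}$ is correct.

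The genuine gap is the one you flag yourself: the per-step contraction factor. The row/column sums $\sup_i\sum_{j\neq i}R^3|X_i-X_j|^{-3}$ are only $O(c_0\log N)$, so a Schur-type bound on the reflection matrix gives $Cc_0\log N$, not $Cc_0$; and the statement insists $\bar c_0$ depends only on $q$, so the log must truly be absent from the geometric factor. Your proposed fix --- near block gives $c_0$, tail absorbed via $\lambda_{q'}$ into the \emph{prefactor} --- does not close: $\lambda_{q'}$ is only assumed finite, so if the tail is paid for per iteration step via a H\"older split exposing $\lambda_{q'}^{1/q'}$, the prefactor compounds to $\lambda_{q'}^{k/q'}$, which contradicts the single $\lambda_{q'}^{1/q'}$ in \eqref{eq:L^infty.convergence}. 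To match the stated form one needs the contraction to be a pure $Cc_0$ per step with $\lambda_{q'}$ appearing exactly once, in the passage from strain data to an $L^\infty$ velocity bound. In \cite{Hoefer19} this is accomplished by running the contraction in a different, energy-type norm on the reflection data rather than in the $\ell^q(L^q)$ norm your sketch uses; the $\ell^q(L^q)$ framing is what forces the critical $|x|^{-3}$ sum and hence the log. So the sketch is structurally faithful but the key quantitative step is not proved, and the specific norm choice you suggest would not yield the claimed $(Cc_0)^k$.
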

Here, $q'$ is the H\"older dual of $q$, i.e. $\frac 1 q+\frac{1}{q'}=1$. At time $0$, assumption \eqref{eq:density.condition} and Lemma \ref {le:sums} imply that 
$c_0 + \lambda_q \le C \phi_N$. Moreover, we will see that $\|e v^{(0)}_N\|^q_{L^q(\cup_i B_i)} \leq \phi_N$ as well as $R^\alpha\le C\phi_N^{2}$. 
Thus, Theorem \ref{th:reflections} implies for $k =0,1$
\begin{align}
	\|v^{(k)}_N - u_N\|_{L^\infty(\R^3)} \leq  C (C \phi_N)^{k+1}
\end{align} 
as long as we control the minimal distance $\dmin$ sufficiently well.
In particular, if we are only interested in approximating $V_i$ up to terms of order $\phi_N$, as in the proof of Theorem \ref{th:tau}, it suffices to consider $v^{(0)}(X_i)$.
These values, we can compute explicitly. Indeed, the function $w_N$ has an explicit form, namely
\begin{align} \label{eq:w_n.explicit}
	w_N = \frac{g}{6 \pi N R} \quad \text{in } B_R(0), && w_N = \frac 1 N \Phi g - \frac {R^2}{6 N} \Delta \Phi g \quad \text{in } \R^3 \setminus B_R(0).
\end{align}
This leads to the approximation \eqref{eq:V_i.strategy1}.

\medskip

In order to obtain the refined approximation \eqref{eq:V_i.strategy2},
we need to consider $v^{(1)}_N(X_i)$. This function is not explicit anymore. However, the leading order term of $Q_i w_N$ is explicit.
Indeed, we have the following result taken from \cite{Hoefer19}. (Similar estimates can be found in \cite{NiethammerSchubert19,Gerard-VaretHillairet19, HillairetWu19}.)
\begin{lem}[{\cite[Lemmas 4.3 and 4.4]{Hoefer19}}] \label{lem:dipole}
Let \eqref{eq:separation.condition} be satisfied. Then, 
\begin{align}
	&\left|(Q_j w_N(\cdot - X_k))(X_i) - 5\frac{\phi_N}{N} e\Phi(X_i - X_j) \fint_{B_j} e w_N(x - X_k)  \dd x\right| \\ &\lesssim \frac{R^{5/2}}{|X_i - X_j|^3} \|e w_N(\cdot - X_k)\|_{L^2(B_j)}.
\end{align}
\end{lem}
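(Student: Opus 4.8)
The plan is to reduce the bound to a multipole (boundary-integral) expansion of the single-sphere reflection $Q_j w_N(\cdot - X_k)$, using the explicit formula \eqref{eq:w_n.explicit} for $w_N$ together with the elementary fact that the symmetric gradient of any Stokes velocity field $\psi$ is biharmonic (since $\Delta e\psi = \nabla^2 p$ with $p$ harmonic). Fix $i \neq j$, abbreviate $\psi := w_N(\cdot - X_k)$, $d := |X_j - X_k|$, $D := |X_i - X_j|$, and note that by \eqref{eq:separation.condition} one has $d, D \ge 4R$ (the case $k = j$ being trivial, since then $\psi \equiv g/(6\pi N R)$ on $B_j$ by \eqref{eq:w_n.explicit}, so $e\psi = 0$ on $B_j$ and both sides vanish). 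Since the surface force generating $\psi$ is carried by $\partial B_R(X_k)$ and $d \ge 4R$, the field $\psi$ solves the homogeneous Stokes system on a neighbourhood of $\overline{B_{2R}(X_j)}$; from \eqref{eq:w_n.explicit} and interior estimates one gets $|\nabla^m e\psi| \lesssim N^{-1} d^{-2-m}$ there, and in particular $\|e\psi\|_{L^2(B_{2R}(X_j))} \sim \|e\psi\|_{L^2(B_j)} \sim N^{-1} d^{-2} R^{3/2}$.

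Next I would represent $Q_j\psi$ on $\R^3 \setminus \overline{B_j}$ through its boundary traction on $\partial B_j$ and Taylor-expand the Oseen tensor $\Phi(X_i - \cdot)$ around $X_j$. In the force- and torque-free normalisation of $Q_j$ (established in \cite{Hoefer19}) the monopole term and the antisymmetric part of the dipole term vanish; the symmetric part of the dipole term equals $(\mathrm{const})\, e\Phi(X_i - X_j) : S$, where $S$ is the stresslet of $Q_j\psi$; and all remaining contributions decay strictly faster and have moments bounded by powers of $R$ times $\|\nabla^m e\psi\|_{L^\infty(B_j)}$. Using $d, D \ge 4R$ and the size estimates above, each such contribution is $\lesssim R^{5/2} D^{-3} \|e\psi\|_{L^2(B_j)}$ with, in addition, a positive power of $R/\min\{d, D\} \le 1/4$, so that the whole tail sums to $\lesssim R^{5/2} D^{-3}\|e\psi\|_{L^2(B_j)}$ — exactly the error claimed. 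It then remains to identify $S$: here I would invoke the Faxén law for the stresslet of a rigid sphere, $S = \tfrac{20\pi}{3} R^3 (1 + \tfrac{R^2}{10}\Delta) e\psi(X_j)$, the ball-average expansion $\fint_{B_R} f = f(0) + \tfrac{R^2}{10}\Delta f(0) + \dots$, and the biharmonicity of $e\psi$ near $B_j$ (so that all terms from $\Delta^2 e\psi$ onwards vanish), which gives $S = \tfrac{20\pi}{3} R^3 \fint_{B_j} e\psi = 5\tfrac{\phi_N}{N}\fint_{B_j} e w_N(\cdot - X_k)$ \emph{exactly}. Substituting this into the dipole term reproduces $5\tfrac{\phi_N}{N} e\Phi(X_i - X_j)\fint_{B_j} e w_N(\cdot - X_k)$, and combined with the tail estimate this is the assertion.

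The main difficulty is the exact bookkeeping of $R$-powers: the stresslet coefficient must coincide with $5\tfrac{\phi_N}{N}\fint_{B_j} e\psi$ with \emph{no} residual $D^{-2}$ term, since a generic error of size $R^7\|\Delta^2 e\psi\|_{L^\infty(B_j)}$ at the level of $S$ would produce a far-field contribution $\sim R^{11/2} d^{-4} D^{-2}\|e\psi\|_{L^2(B_j)}$ that is \emph{not} absorbable into $R^{5/2} D^{-3}\|e\psi\|_{L^2(B_j)}$. This is why the biharmonicity of $e\psi$ (equivalently, the harmonicity of the Stokes pressure) and the precise form of the stresslet Faxén operator are essential, and why each higher multipole must be paired with at least one factor $R/\min\{d, D\}$ from the separation condition \eqref{eq:separation.condition}. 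A more routine, but still necessary, point is to justify the boundary-integral representation of $Q_j\psi$ and the associated interior estimates under the force/torque-free normalisation; for these one relies on the single-particle bounds of \cite{Hoefer19}. Alternatively — and this is the route of \cite{Hoefer19} — one can bypass Faxén's law entirely, expressing both $S$ and $\fint_{B_j} e\psi$ as integrals over $\partial B_j$ and comparing them directly through the explicit single-sphere solution operators.
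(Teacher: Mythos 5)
The paper does not prove this lemma in situ---it is quoted directly from \cite{Hoefer19}---so there is no in-paper proof to compare against; your blind derivation is therefore assessed on its own, and it is correct. The crux of your argument checks out: the Fax\'en law for the stresslet of a force- and torque-free rigid sphere in an ambient Stokes flow $\psi$ is \emph{exact}, $S = \tfrac{20\pi}{3}R^3\bigl(1+\tfrac{R^2}{10}\Delta\bigr)e\psi(X_j)$, and since $e\psi$ of any Stokes velocity is biharmonic, Pizzetti's expansion terminates, $\fint_{B_j}e\psi = e\psi(X_j)+\tfrac{R^2}{10}\Delta e\psi(X_j)$, whence $S = \tfrac{20\pi}{3}R^3\fint_{B_j}e\psi = 5\tfrac{\phi_N}{N}\fint_{B_j}e\psi$ with no remainder. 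You correctly flag this coincidence as essential: a naive $O(R^7\Delta^2 e\psi)$ residual in $S$ would, after hitting $\nabla\Phi(X_i-X_j)\sim D^{-2}$, produce a term $\sim R^{11/2}d^{-4}D^{-2}\|e\psi\|_{L^2(B_j)}$ that for $D\gg d$ is not absorbed by the stated $R^{5/2}D^{-3}\|e\psi\|_{L^2(B_j)}$. The multipole-tail bookkeeping is also right: with \eqref{eq:separation.condition} giving $d,D\geq 4R$, and with $|\nabla^m e\psi|\lesssim N^{-1}d^{-2-m}$ from the explicit form of $w_N$, each extra multipole order costs a factor $R/D$ (one more derivative of $\Phi$) and a factor $R/d$ (one more derivative of $e\psi$), so the tail sums geometrically to the claimed bound. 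One point you rightly delegate to \cite{Hoefer19} is the force- and torque-free normalisation of $Q_j$: the definition in this paper fixes only $eQ_j\varphi = e\varphi$ on $B_j$, leaving the rigid part of $Q_j\varphi|_{B_j}$ free, and the zero-force/zero-torque choice is what makes the monopole and the antisymmetric dipole vanish. Your closing remark that \cite{Hoefer19} instead compares $S$ and $\fint_{B_j}e\psi$ through the explicit single-sphere boundary operators (rather than via Fax\'en's law) marks a genuinely different route; yours has the pedagogical advantage of making transparent \emph{why} the coefficient is exactly $5\phi_N/N$ and why the error gains a full factor $R/\min\{d,D\}$.
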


Using in addition the explicit form of $w_N$ yields 
\begin{align} \label{eq:v^1}
	v^{(1)}_N(X_i) \approx \frac{g}{6 \pi N R}  + \sum_{j \neq i} \Phi(X_i - X_j)g - \frac{5 \phi_N}{N^2} \sum_{j\neq i} \sum_{k \neq j} e \Phi(X_i - X_j) e \Phi(X_j - X_k) g.
\end{align}
Using that we already know 
from Theorem \ref{th:tau} that the empirical measure $\rho_N$ is close to $\tau$, this leads to \eqref{eq:V_i.strategy2}.

\medskip

We emphasize that the last term on the right-hand side of \eqref{eq:v^1} needs to be handled very carefully.
Roughly speaking, this term is the main reason why we cannot treat the case 
that $\phi_N$ is small but non-vanishing. First, this term has a complicated structure, consisting of three-particle interactions, and second, the interaction kernel $e\Phi$ has the singularity $|e\Phi(x)| \sim |x|^{-2}$ which is critical in dimension $3$ (see also the next subsection).
Rigorous results on the derivation of mean-field limits with such singular interaction kernels are only known in special cases under some structural assumptions on the kernel, see \cite{SerfatyDuerinckx18}. These assumptions are not satisfied for $e\Phi$. To make matters worse, if one wants to treat $\phi_N$ of order $1$, one is forced to consider all terms in the expansion 
\begin{align*}
u_N := \lim_{k\to \infty}(1- \sum_i Q_i)^k v^{(0)}_N,
\end{align*}
which includes $k$-particle interactions for all $k$.

The main problem caused by the singular interaction regards the control of $\dmin$ as time evolves, which is needed for the results on the method of reflections, but also 
for the estimates of the Wasserstein distance discussed in the following section.
Due to the singularity of $e\Phi$ this leads to sums
\begin{align} \label{eq:alpha_3}
	\frac 1 N \sum_{j \neq i} \frac{1}{|X_i - X_j|^3},
\end{align} 
which we only control under the assumption \eqref{eq:ass.phi.logN}.
In fact, for controlling the particle distances, we rely on the following result from \cite{Hofer18MeanField}, which is also obtained through the method of reflections and assumptions \eqref{eq:density.condition} and \eqref{eq:ass.phi.logN}.
\begin{lem}[{\cite[Lemma 3.16]{Hofer18MeanField}}]
	For $k=2,3$, let
	\begin{align}
		\alpha_k = \sup_i \frac 1 N \sum_{j \neq i} \frac{1}{|X_i - X_j|^k}.
	\end{align}
		There exists $\delta > 0$ with the following property. 
		If \eqref{eq:separation.condition} is satisfied and $\phi_N \alpha_3 < \delta$, then
		\begin{align}
			|u_N(X_i) - u_N(X_j)| \leq C \alpha_2 |X_i - X_j|.
		\end{align}
\end{lem}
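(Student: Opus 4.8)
The plan is to expand $u_N$ through the method of reflections, $u_N=v^{(0)}_N+\sum_{m\geq1}\bigl(v^{(m)}_N-v^{(m-1)}_N\bigr)$ (the series converging by Theorem~\ref{th:reflections}), and to estimate the \emph{discrete oscillation} $\psi(X_i)-\psi(X_j)$ of each contribution separately. Working with oscillations rather than with $L^\infty$-norms is essential, since the asserted bound must scale linearly in $d:=|X_i-X_j|\geq\dmin$. I fix the two particles $X_i,X_j$ once and for all.

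\textbf{The leading term.} By the explicit form \eqref{eq:w_n.explicit} of $w_N$,
\[
v^{(0)}_N(X_i)-v^{(0)}_N(X_j)=\frac1N\sum_{k\neq i}\Bigl(\Phi-\tfrac{R^2}{6}\Delta\Phi\Bigr)(X_i-X_k)\,g-\frac1N\sum_{k\neq j}\Bigl(\Phi-\tfrac{R^2}{6}\Delta\Phi\Bigr)(X_j-X_k)\,g,
\]
so the self-mobility constant $g/(6\pi NR)$ cancels; moreover, since $\Phi$ and $\Delta\Phi$ are even, the two most singular ``self'' contributions — the term $k=j$ of the first sum and $k=i$ of the second — cancel \emph{exactly}. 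For the remaining sum over $k\notin\{i,j\}$ I would split into far particles, $|X_i-X_k|>2d$, where the mean value theorem and $|\nabla\Phi|\lesssim|\cdot|^{-2}$ give $|\Phi(X_i-X_k)-\Phi(X_j-X_k)|\lesssim d\,|X_i-X_k|^{-2}$, and near particles, $|X_i-X_k|\leq 2d$, where one simply bounds $|\Phi(X_\bullet-X_k)|\lesssim|X_i-X_k|^{-1}\leq 2d\,|X_i-X_k|^{-2}$. In either case the contribution is $\lesssim d\cdot\tfrac1N\sum_k|X_i-X_k|^{-2}\leq\alpha_2 d$; the $\Delta\Phi$-terms are treated the same way and are moreover smaller by a factor $R^2/\dmin^2=c_0^{2/3}$. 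Hence $|v^{(0)}_N(X_i)-v^{(0)}_N(X_j)|\lesssim\alpha_2 d$, which is the main term of the estimate.

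\textbf{The reflection corrections.} For $m\geq1$ each correction is a composition of single-particle operators $Q_l$ applied to $v^{(0)}_N$, and away from $B_l$ the operator $Q_l$ acts — by Lemma~\ref{lem:dipole} — as the dipole $\tfrac{5\phi_N}{N}\,e\Phi(\cdot-X_l)\fint_{B_l}e(\cdot)$, the error there being smaller by a factor $\sim R/\dmin=c_0^{1/3}$ and hence negligible (so for the first reflection one may work directly from \eqref{eq:v^1}). The estimate then rests on two points. First, the ambient strain $\fint_{B_l}ev^{(0)}_N$ feeding the first reflection is $\lesssim\alpha_2$ (exactly as above, $\tfrac1N\sum_k|X_l-X_k|^{-2}\leq\alpha_2$), and at each subsequent reflection the relevant strain acquires a further small factor $\lesssim\phi_N\alpha_3$. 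Second, the dipole kernel $e\Phi\sim|\cdot|^{-2}$ has gradient $\sim|\cdot|^{-3}$, so the oscillation of a stresslet field summed over particles is $\lesssim\tfrac{\phi_N}{N}\sum_l|X_i-X_l|^{-3}\,d\lesssim\phi_N\alpha_3\,d$. Combining these, and handling as before the most singular ``self'' pieces — a factor $Q_l$ with $l\in\{i,j\}$ evaluated at its own centre is $O\bigl(R^2\|\nabla^2 v^{(0)}_N\|_{L^\infty(B_l)}\bigr)=O(R^2\alpha_3)$, since $Q_l$ kills constant and affine fields at $X_l$, hence $O(c_0^{2/3}\alpha_2 d)$; a dipole kernel with argument $X_i-X_j$ is absorbed using $\alpha_2\geq N^{-1}d^{-2}$ and $\alpha_3\geq N^{-1}d^{-3}$ — one obtains (counting reduced reflection chains, each of length $s$ contributing $\lesssim(\phi_N\alpha_3)^s\alpha_2 d$ to the oscillation) that the full series sums to $\lesssim\alpha_2 d$, provided $\delta$ is chosen so small that $C\phi_N\alpha_3<1$ with $C$ the resulting geometric constant. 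This proves the claim.

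\textbf{The main obstacle.} The delicate part is not the leading order but making the reflection corrections scale linearly in $d$: this needs essentially an oscillation-valued refinement of Theorem~\ref{th:reflections}, one must extract the genuinely small factor $\phi_N\alpha_3$ — rather than merely $c_0$ — at each reflection, which is precisely why the hypothesis is $\phi_N\alpha_3<\delta$ and not just $c_0<\bar c_0$, and one must keep the most singular self-interaction terms $l\in\{i,j\}$ under control, where the elementary inequalities $\alpha_2\geq N^{-1}d^{-2}$, $\alpha_3\geq N^{-1}d^{-3}$ together with the vanishing of $Q_l$ on constant and affine fields at $X_l$ are what make everything close.
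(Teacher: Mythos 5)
This lemma is not proved in the paper; it is cited verbatim from \cite[Lemma~3.16]{Hofer18MeanField}, and the paper only indicates that the original proof ``is also obtained through the method of reflections and assumptions \eqref{eq:density.condition} and \eqref{eq:ass.phi.logN}.'' There is therefore no in-paper proof against which to compare your attempt; what follows is an assessment of the sketch on its own terms.

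Your plan is sound and reproduces what the paper's own remarks tell us about the original argument: expand $u_N$ through the method of reflections and estimate the discrete oscillation $\psi(X_i)-\psi(X_j)$ of each term, so that every contribution scales linearly in $d=|X_i-X_j|$. The leading-order treatment of $v^{(0)}_N$ (cancellation of the self-mobility constant, parity cancellation of the two most singular terms, Lipschitz bound $|\Phi(X_i-X_k)-\Phi(X_j-X_k)|\lesssim d(|X_i-X_k|^{-2}+|X_j-X_k|^{-2})$ giving $\alpha_2 d$) is correct. The identification of $\phi_N\alpha_3$ as the per-reflection contraction factor for the \emph{oscillation} — as opposed to $c_0$ for plain $L^\infty$ decay — is also the right insight and exactly explains why the hypothesis is $\phi_N\alpha_3<\delta$ rather than $c_0<\bar c_0$. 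The use of the cancellation $Q_l\varphi(X_l)=0$ for affine $\varphi$ (from the mean-value representation of $Q_l$ on $B_l$, cf.~\cite[Lemma~3.1]{Hoefer19}) and the resulting $O(R^2\|\nabla^2\cdot\|_{L^\infty(B_l)})$ bound for the diagonal self-terms is also correct.

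Two caveats on the compressed parts. First, the absorption of the ``dipole evaluated at $X_i-X_j$'' self-term $\frac{\phi_N}{N}d^{-2}\alpha_2$ into $\alpha_2 d$ is cleaner via $\alpha_3\ge N^{-1}\dmin^{-3}$ and $\phi_N\alpha_3<\delta$, which give $\phi_N/N<\delta\,\dmin^3\le\delta\,d^3$; your route through $\alpha_2\ge N^{-1}d^{-2}$ reduces the task to $\phi_N\alpha_2\lesssim d$, which does not directly follow from the stated hypotheses. Second, at reflection depth $m\ge 2$ the diagonal term $Q_l(\cdot)(X_l)$ brings in $R^2\|\nabla^2\cdot\|_{L^\infty(B_l)}$ of the iterated dipole field, so higher sums $\alpha_4$ appear; they are tamed by $R^2\alpha_4\le (R/\dmin)^2\alpha_3\dmin\le\alpha_3 d$, but this step needs to be spelled out if the sketch is to become a proof. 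Neither point is a conceptual obstruction, so the architecture of your argument is, as far as one can tell from the information in this paper, the same as the cited one.
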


\subsection{Convergence to the mean-field limit by a generalization of a  result of \texorpdfstring{\cite{Hauray09}}{Hauray}}
\label{sec:meanfield}

For the second step, we prove the following theorem which generalizes a classical result of Hauray \cite{Hauray09}.
Hauray considers dynamical particle systems in $\R^d$ described by
\begin{align}
	\frac{\dd}{\dd t} X_i = \frac 1 N \sum_{j \neq i} K(X_i - X_j)
\end{align}
where the interaction kernel $K$ satisfies the condition
\begin{align} \label{eq:C_alpha} \tag{$C_\alpha$}
	\dv K = 0 , \quad \forall x \in \R^d ~ |K(x)| + |x| |\nabla K(x)| \leq \frac{C}{|x|^\alpha}
\end{align}
with $\alpha < d-1$.
Hauray shows in \cite[Theorem 2.1]{Hauray09} that the infinite Wasserstein 
distance between the empirical density and a continuous limit $\sigma$
is controlled by its distance at time zero for sufficiently well-prepared initial data, where the limit density solves
\begin{align}
	\partial_t \sigma + (K \ast \sigma) \cdot \nabla \sigma = 0, \quad \sigma(0) = \sigma_0.
\end{align}

More precisely, if $K$ satisfies the condition \eqref{eq:C_alpha} with $\alpha < d - 1$ and 
\begin{align} \label{eq:ass.Hauray}
	\lim_{N \to \infty} \frac{\left(\W_\infty(\rho_N(0),\sigma_0)\right)^d}{\dmin^{1+\alpha}} = 0,
\end{align}
then for all $T_\ast > 0$ and all $N$ sufficiently large (depending on $T_\ast$) 
\begin{align}
	\W_\infty(\rho_N(t),\sigma(t)) \leq \W_\infty(\rho_N(0),\sigma_0) e^{C \|\sigma_0\|_\infty t} \quad \text{for all } t \leq T_\ast.
\end{align}

\medskip
Clearly, we cannot directly apply this result in our setting where the particle velocities satisfy \eqref{eq:V_i.strategy1} and \eqref{eq:V_i.strategy2}. 
We therefore generalize the result of Hauray. In particular our result includes error terms like $E_i$ and $\bar E_i$ above as well as  additional external velocity fields like
$5 (e \Phi \ast (\tau (e \Phi g \ast \tau))$ or $\frac{g}{6\pi NR}$. Moreover, by a refined estimate (see Lemma \ref{lem:sums.Wasserstein}, we are able to relax assumption \eqref{eq:ass.Hauray}, which we will discuss in the remark after the statement of the theorem.

\begin{thm} \label{th:mean.field.general}
	Let $K \colon \R^d \to \R^d$ satisfy \eqref{eq:C_alpha} for some $\alpha < d -1$.
	Moreover, let $(\varphi_N)_{N\in \N}$ be a sequence of divergence-free vector fields uniformly bounded in $ L^\infty((0,\infty);W^{1,\infty}(\R^d))$.
	 Let $\tau_N^0 \in L^\infty(\R^d) \cap \mP(\R^d)$ be uniformly bounded, where $\mP$ denotes the space of probability densities
	and let $\tau_N$ be the solutions to 
	\begin{align}
\left\{\begin{array}{rl}
		\partial_t \tau_N + (K \ast \tau_N + \varphi_N) \cdot \nabla \tau_N &= 0, \\
		\tau_N(0) &= \tau_0^N.
\end{array}\right.
	\end{align}
	Let $X_i^0 \in \R^d$, $1 \leq i \leq N$, and consider
	\begin{align} \label{eq:sigma_N.ODE}		\dot X_i(t) = \varphi_N(X_i) + \frac 1 N \sum_{j \neq i} K(X_i(t) - X_j(t)) + E_i(t).
	\end{align}
		Let
	\begin{align}
		\sigma_N(t) := \frac 1 N \sum_i \delta_{X_i(t)}, &&
		\eta(t) := \W_\infty(\sigma_N(t),\tau_N(t)).
	\end{align}
	Assume there exists a sequence of non-decreasing functions $e_N$  such that $\eta$, $\dmin$ and $E_i$ satisfy for all $t \geq 0$
\begin{align} \label{eq:Wasserstein.T=0.generic}
		\lim_{N \to \infty} \frac{(\eta(0) +  e_N(t))^{d-(1 + \alpha)}}{\dmin(0)^{1+\alpha}N^{(1+\alpha)/d}} = 0, \qquad \dmin(0)N^{1/d}\ls 1, 
\end{align}
and
\begin{align}
		\forall \lambda > 0 \, \exists N_0> 0 \, \forall N > N_0 \quad \frac{\eta(t)}{\eta(0) + e_N(t)} + \frac{\dmin(0)}{\dmin(t)} \leq \lambda
		\implies 
		\left\{\begin{array}{rl}
			 \displaystyle \sup_i E_i(t) \leq e_N(t), \\
			 \displaystyle \sup_{i \neq j} \frac{|E_i(t) - E_j(t)|}{|X_i - X_j|} \leq C_1. 
		\end{array} \right.  \label{eq:E_i.bounds}
	\end{align}
	where $C_1$ depends only on $\sup_N \|\tau_0^N\|_{L^\infty}$, $K$, and $\sup_N \|\varphi_N\|_{L^\infty(W^{1,\infty})}$.
	Then, for all $T>0$ and all $N$ sufficiently large (depending on $T$) it holds for all $t\in [0,T]$:
	\begin{equation}
	\label{eq:eta.d_min.est}
	\begin{aligned}
		\eta(t) &\leq e^{C_2 t}\bra{\eta(0) + e_N(t)},\\
		\dmin(t) &\geq \dmin(0) e^{-C_2 t},
	\end{aligned}
	\end{equation}
	where $C_2$ depends only on $C_1$, $\sup_N \|\tau_0^N\|_{L^\infty}$, $K$, and $\sup_N \|\varphi_N\|_{L^\infty(W^{1,\infty})}$.
Moreover,
\begin{align} \label{eq:estimate.convolutions}
	|(K \ast ( \tau_N - \sigma_N)(x)| \leq \frac{C_2}{N |\dist(x,\{X_i\}_i)|^\alpha} + C_2 e^{C_2 t} (\eta(0) + e_N(t)).
\end{align}
\end{thm}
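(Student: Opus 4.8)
The plan is to follow Hauray's Gronwall-type argument for the infinite Wasserstein distance $\eta(t)=\W_\infty(\sigma_N(t),\tau_N(t))$, but to carry along the extra external velocity field $\varphi_N$ and the error terms $E_i$, and to replace the crude bound on $\frac1N\sum_{j\neq i}|K(X_i-X_j)|$ by a sharper one that uses the near-optimal transport plan between $\sigma_N$ and $\tau_N$. Concretely, I would differentiate $\eta$: writing $\psi_i$ for the image of $X_i$ under an (almost) optimal transport map from $\sigma_N(t)$ to $\tau_N(t)$, one has $\eta(t)^2\approx\frac1N\sum_i|X_i-\psi_i|^2$ (up to $\varepsilon$, which can be sent to $0$ at the end), and
\begin{align}
\frac{\d}{\d t}\,\tfrac12\eta(t)^2 &\leq \frac1N\sum_i (X_i-\psi_i)\cdot\Bigl(\varphi_N(X_i)-\varphi_N(\psi_i)\Bigr)\\
&\quad + \frac1N\sum_i (X_i-\psi_i)\cdot\Bigl(\tfrac1N\textstyle\sum_{j\neq i}K(X_i-X_j) - (K\ast\tau_N)(\psi_i)\Bigr)\\
&\quad + \frac1N\sum_i (X_i-\psi_i)\cdot E_i .
\end{align}
The first term is controlled by $\|\varphi_N\|_{W^{1,\infty}}\eta^2$ directly. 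The third term splits, via \eqref{eq:E_i.bounds}, into a part bounded by $\sup_i|E_i|\cdot\eta\leq e_N(t)\eta$ and, after also adding and subtracting $E_j$, a part bounded by $C_1\eta^2$; but to invoke \eqref{eq:E_i.bounds} one first needs the bootstrap hypothesis $\eta(t)/(\eta(0)+e_N(t))+\dmin(0)/\dmin(t)\leq\lambda$ to hold, which is where the continuity/bootstrap structure of the argument enters.

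The heart of the matter is the middle, mean-field term. Following Hauray, I would split $\frac1N\sum_{j\neq i}K(X_i-X_j) - (K\ast\tau_N)(\psi_i)$ as $\bigl[\frac1N\sum_{j\neq i}K(X_i-X_j)-\frac1N\sum_{j\neq i}K(\psi_i-\psi_j)\bigr] + \bigl[\frac1N\sum_{j\neq i}K(\psi_i-\psi_j)-(K\ast\tau_N)(\psi_i)\bigr]$. In the first bracket one uses the Lipschitz bound $|\nabla K(x)|\leq C|x|^{-(1+\alpha)}$ from \eqref{eq:C_alpha} and a dyadic decomposition in $|X_i-X_j|$; the dangerous near-diagonal shells are handled by the lower bound on $\dmin(t)$, which caps $\frac1N\#\{j:|X_i-X_j|\leq r\}\lesssim (r/\dmin(t))^d/N$, so the singular sum $\frac1N\sum_{j\neq i}|X_i-X_j|^{-(1+\alpha)}$ converges precisely because $\alpha<d-1$, producing a contribution $\lesssim \eta$ times a constant (using $|X_i-\psi_i|,|X_j-\psi_j|\leq\eta$ on each transport pair). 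The second bracket is the consistency error between the empirical measure $\frac1N\sum\delta_{\psi_j}$ (which, by construction, equals a discretization of $\tau_N$ at scale $\eta$) and $\tau_N$ itself; here I would use Lemma~\ref{lem:sums.Wasserstein} — this is exactly the refined estimate advertised in the text — to bound it by $C\|\tau_N(t)\|_\infty$-dependent constant times $\eta$, which is what allows weakening Hauray's assumption \eqref{eq:ass.Hauray} to \eqref{eq:Wasserstein.T=0.generic}. The outcome is $\frac{\d}{\d t}\eta^2\lesssim \eta^2 + e_N(t)\eta$, i.e. $\frac{\d}{\d t}\eta\lesssim \eta + e_N(t)$, and since $e_N$ is non-decreasing, Gronwall gives $\eta(t)\leq e^{C_2 t}(\eta(0)+e_N(t))$.

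For the distance lower bound I would run a parallel computation on $\frac1N\sum_{i\neq j}|X_i-X_j|^{-1}$ or, more directly, estimate $\frac{\d}{\d t}|X_i-X_j|\geq -|V_i-V_j|\geq -C_2|X_i-X_j|$ for the pair realizing $\dmin(t)$, using $|\varphi_N(X_i)-\varphi_N(X_j)|\leq\|\varphi_N\|_{W^{1,\infty}}|X_i-X_j|$, the Lipschitz-in-the-pair bound on $E_i-E_j$ from \eqref{eq:E_i.bounds}, and the analogue of \cite[Lemma 3.16]{Hofer18MeanField} / a direct dyadic estimate showing $|\frac1N\sum_{k}(K(X_i-X_k)-K(X_j-X_k))|\leq C_2\alpha_2|X_i-X_j|$ where $\alpha_2\lesssim 1$ once $\dmin(t)\gtrsim N^{-1/d}$; this yields $\dmin(t)\geq\dmin(0)e^{-C_2 t}$. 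The two estimates \eqref{eq:eta.d_min.est} must be closed simultaneously by a continuity argument: define $T^\ast$ as the supremum of times on which both the desired bound on $\eta$ and $\dmin(t)\geq\dmin(0)e^{-C_2t}$ hold together with the bootstrap inequality in \eqref{eq:E_i.bounds} for, say, $\lambda=$ a fixed constant; on $[0,T^\ast)$ the differential inequalities are valid, and \eqref{eq:Wasserstein.T=0.generic} guarantees that the assumption $\eta(0)^d\ll\dmin(0)^{1+\alpha}N^{(1+\alpha)/d}$-type smallness survives, so the constants do not blow up and $T^\ast\geq T$ for $N$ large. The last claim \eqref{eq:estimate.convolutions} is then a pointwise consequence: write $(K\ast(\tau_N-\sigma_N))(x)=(K\ast\tau_N)(x)-\frac1N\sum_i K(x-X_i)$, split the sum at scale $\dist(x,\{X_i\})$ — the contribution of particles within that distance is $\lesssim \frac1N\dist(x,\{X_i\})^{-\alpha}$ by the $\dmin$ bound, and the far part is again a consistency error $\lesssim \eta(t)$ handled by Lemma~\ref{lem:sums.Wasserstein}.

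\textbf{Main obstacle.} The delicate point is making the bootstrap/continuity argument airtight: the bounds on $E_i$ in \eqref{eq:E_i.bounds} are only available \emph{conditionally} on $\eta$ and $\dmin$ being controlled, while the control of $\eta$ and $\dmin$ in turn requires the bounds on $E_i$ — so one must set up the open-closed time set carefully and check that the strict inequality is preserved, exploiting that $e_N(t)\to 0$ fast enough (through \eqref{eq:Wasserstein.T=0.generic}) that the feedback constant stays below the threshold $\lambda$. The second delicate point is the sharp near-diagonal sum estimate (Lemma~\ref{lem:sums.Wasserstein}), which is what both relaxes Hauray's hypothesis and supplies the consistency bound; getting the exponents right there, so that $\alpha<d-1$ is exactly the borderline, is the technical crux.
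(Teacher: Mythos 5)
Your overall architecture --- far/near splitting of the mean-field term with the Lipschitz bound from \eqref{eq:C_alpha} on the far field, near-diagonal counting through $\dmin$ via Lemma \ref{lem:sums.Wasserstein}, a parallel differential inequality for $\dmin$, and a simultaneous bootstrap closed by \eqref{eq:Wasserstein.T=0.generic} --- is exactly the paper's. But there is a concrete gap in the Gronwall functional you differentiate. You set $\tfrac12\eta(t)^2\approx\tfrac1N\sum_i|X_i-\psi_i|^2$; this is false for $\eta=\W_\infty$ (it is a $\W_2$-type quantity, only a \emph{lower} bound for $\eta^2$). All your subsequent estimates bound the derivative of this $\ell^2$ average from above by multiples of $\eta^2$ and $e_N\eta$, so Gronwall closes only for the $\ell^2$ average and never returns control of $\sup_i|X_i-\psi_i|$. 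That sup control is indispensable here: the conditional hypothesis \eqref{eq:E_i.bounds} is phrased in terms of $\W_\infty$, and Lemma \ref{lem:sums.Wasserstein} requires that \emph{every} point lie within $\W_\infty(\sigma_N,\tau_N)$ of its image, so the bootstrap cannot be restarted from an averaged bound. The paper instead works with $f(t)=\sup_{t_0\le s\le t}\,\sigma_N(s)-\esssup|T_s(x)-x|$, where the coupling is frozen at time $t_0$ and pushed forward by the two flows (Lemma \ref{lem:Wass.deriv}); this majorizes $\W_\infty$, satisfies an integral inequality in the sup itself, and is iterated by restarting at later times $t_0$.

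A second, smaller issue: you take the transport map from $\sigma_N$ to $\tau_N$; an optimal \emph{map} in that direction does not exist, since atoms would have to split. The paper uses $T\#\tau_N=\sigma_N$, so the mean-field discrepancy becomes a single integral of $K(T(x)-T(y))-K(x-y)$ against $\tau_N(y)\,\mathrm{d}y$, and your ``second bracket'' (a quadrature error between $\frac1N\sum_j\delta_{\psi_j}$ and $\tau_N$) never appears; Lemma \ref{lem:sums.Wasserstein} is not built to bound such a quadrature error, only the singular sums $\frac1N\sum_j|X_i-X_j|^{-\beta}$ arising on the near-field region $|x-y|\lesssim f$. Both points are repairable, but as written the argument does not prove the stated $\W_\infty$ bound.
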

 \begin{rem} 
Since $\eta(0) \gtrsim N^{-1/d}$ by \eqref{eq:discretization.error}, this theorem implies \cite[Theorem 2.1]{Hauray09} by considering $\varphi_N = 0 = E_i$.
 On the other hand, assumption \eqref{eq:Wasserstein.T=0.generic} is considerably less stringent than the naive generalization of \eqref{eq:ass.Hauray}
 \begin{align}  \label{eq:ass.naive}
	\lim_{N \to \infty} \frac{\eta(0)^d + e_N(t)^d}{\dmin^{1+\alpha}} = 0.
\end{align}
Recall that for our purpose, the proof of Theorems \ref{th:tau} and \ref{th:main}, $e_N$ will include terms like $\phi_N$ and $\phi_N^2 |\log \phi_N|$. Thus, \eqref{eq:ass.naive} would impose a rate of the convergence $\phi_N \to 0$ much more severe than \eqref{eq:ass.phi.logN}. On the other hand, due to assumption \eqref{eq:separation.condition}, condition \eqref{eq:Wasserstein.T=0.generic} just requires $e_N \to 0$.

We also remark that, following \cite{CarilloChoiHauray14}, one can expect Theorem \ref{th:mean.field.general} to generalize to the case when $\rho \in L^\infty \cap \mP$ is replaced by $ \rho \in L^p \cap \mP$, if $\alpha < -1 + d/p'$. 
\end{rem}

Theorem \ref{th:mean.field.general} gives control over the infinite Wasserstein distance.
In order to control the $p$-Wasserstein distance, as stated in our main results,
we borrow from \cite{Mecherbet19}. We introduce the intermediate density
\begin{align} \label{eq:rho.bar}
	\bar \rho_N^0(x) = \frac 1 N \sum_i \psi \left(\frac{x - X_i^0}{\dmin}\right),
\end{align}
where $\psi$ is a standard mollifier. Note that assumption \eqref{eq:density.condition}
ensures that $\bar \rho_N^0$ is uniformly bounded in $L^\infty$. 
Thus, we will apply Theorem \ref{th:mean.field.general} to estimate $\W_\infty(\bar \tau_N(t), \rho_N(t))$ and $\W_\infty(\bar \rho_N(t), \rho_N(t))$, where $\bar \tau_N$ and $\bar \rho_N$ are the solutions to \eqref{eq:v.tau} and \eqref{eq:u.rho} respectively with initial data $\bar \rho_N^0$.
In a second step, we will then use stability of the systems \eqref{eq:v.tau} and \eqref{eq:u.rho} to estimate $\W_\infty(\bar \tau_N(t), \tau(t))$ and $\W_\infty(\bar \rho_N(t), \rho(t))$.

\subsection{From the mean-field limit to the effective evolution}
\label{sec:transition}

For the final step of the proof of Theorem \ref{th:main}, it remains to see that the solution to the mean-field limit \eqref{eq:u.rho} is close to the solution of the effective model \eqref{eq:rho_eff.u_eff}. This is provided by the following result.

\begin{prop} \label{pr:rho.rho_eff} 
	Let $\rho_0\in W^{1,1}\cap W^{1,\infty}\cap \fP$ and $1 \leq p < \infty$. Let $\rho_{\eff}$ and $\rho$ be the solutions to \eqref{eq:rho_eff.u_eff} and \eqref{eq:u.rho} respectively. Then, for every $T_\ast$ there exists a constant $C$ only depending on $p,T_\ast, \norm{\rho_0}_{W^{1,1}\cap W^{1,\infty}}$ such that
	\begin{align}\label{eq:prop_continuous}
		\W_p(\rho_\eff(t),\rho(t))+ \norm{(u_\eff-u)(t)}_{L^p}  &\leq C \phi_N^2.
	\end{align}
\end{prop}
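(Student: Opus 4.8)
\smallskip
\noindent\emph{Proof plan.} The plan is to compare the two systems through their transport structure: both $\rho_\eff$ and $\rho$ are transported from the same datum $\rho_0$ by the divergence-free fields $b_\eff=u_\eff+(6\pi\gamma_N)^{-1}g$ and $b=u+(6\pi\gamma_N)^{-1}g$, and the constant drift, being identical in both systems, cancels in every comparison. I would first record, using the well-posedness and regularity results for \eqref{eq:v.tau}, \eqref{eq:u.rho}, \eqref{eq:rho_eff.u_eff} stated in this section, that for $N$ large (so $\phi_N$ small) the densities $\tau,\rho,\rho_\eff$ are bounded, uniformly in $N$ and in $t\le T_\ast$, in $W^{1,1}\cap W^{1,\infty}$. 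Since $\nabla\Phi\in L^1_{\loc}(\R^3)$ and the three Stokes operators have uniformly elliptic $W^{1,\infty}$ coefficients, the associated velocities $v,u,u_\eff$ are then bounded in $C^{1,\beta}$ with the appropriate decay, and $b_\eff,b$ are uniformly Lipschitz, so that their flows are well defined on $[0,T_\ast]$.

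\smallskip
\noindent\emph{Step 1: the $\phi_N$-corrections are $O(\phi_N)$.} The key observation is that deleting the $5\phi_N$-terms from the Stokes equation of \eqref{eq:u.rho} (resp.\ \eqref{eq:rho_eff.u_eff}) turns it into the Stokes equation of \eqref{eq:v.tau} with $\tau$ replaced by $\rho$ (resp.\ $\rho_\eff$). Representing solutions via the Oseen tensor $\Phi$, I would estimate $u-v=\Phi\ast((\rho-\tau)g)+5\phi_N\Phi\ast\dv(\tau\, ev)$, using $\nabla\Phi\in L^1_{\loc}+L^2(\R^3\setminus B_1)$ for the first term and the uniform $C^{0,\beta}$-bound on $\tau\, ev$ for the Calder\'on--Zygmund term appearing in $\nabla(u-v)$, to get $\|u-v\|_{W^{1,\infty}}\ls\|\rho-\tau\|_{L^1\cap L^\infty}+\phi_N$ and likewise $\|u_\eff-v\|_{W^{1,\infty}}\ls\|\rho_\eff-\tau\|_{L^1\cap L^\infty}+\phi_N$. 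Since $\rho,\rho_\eff,\tau$ are transported from the common datum $\rho_0\in W^{1,1}\cap W^{1,\infty}$, the elementary bound $\tfrac{\dd}{\dd t}\|\rho-\tau\|_{L^q}\le\|u-v\|_{L^\infty}\|\nabla\tau\|_{L^q}$ (valid for every $q$ by divergence-freeness of $b$) combined with Grönwall then yields, on $[0,T_\ast]$, that all of $\|\rho-\tau\|_{L^1\cap L^\infty}$, $\|\rho_\eff-\tau\|_{L^1\cap L^\infty}$, $\|u-v\|_{W^{1,\infty}}$, $\|u_\eff-v\|_{W^{1,\infty}}$ are $\ls\phi_N$.

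\smallskip
\noindent\emph{Step 2: elliptic comparison and Grönwall.} Rewriting the Stokes equation of \eqref{eq:u.rho} with the coefficient $2+5\phi_N\rho_\eff$ and subtracting it from that of \eqref{eq:rho_eff.u_eff}, I find that $w:=u_\eff-u$ solves the uniformly elliptic variable-coefficient Stokes system $-\dv((2+5\phi_N\rho_\eff)ew)+\nabla q=(\rho_\eff-\rho)g-5\phi_N\dv G$, $\dv w=0$, with $G=\rho_\eff\,e(u-v)+(\rho_\eff-\tau)\,ev$, hence $\|G\|_{L^p\cap L^\infty}\ls\phi_N$ by Step 1. Splitting off $w_1=\Phi\ast((\rho_\eff-\rho)g)$ and applying the $L^p$-theory for the Stokes operator to the remainder ($p<\infty$; no $L^\infty$-estimate being available), together with a quantitative control of $\Phi\ast((\rho_\eff-\rho)g)$ in $L^p$ by $\W_p(\rho_\eff,\rho)$ — obtained from the kernel bounds on $\Phi$, the near-diagonal contribution being finite since $\nabla\Phi\in L^1_{\loc}$, and using that $\rho_\eff,\rho$ are $L^\infty$-bounded probability densities — I would obtain $\|u_\eff(t)-u(t)\|_{L^p}\ls\W_p(\rho_\eff(t),\rho(t))+\phi_N^2$. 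Finally, the standard Wasserstein stability estimate for continuity equations with Lipschitz fields gives $\tfrac{\dd}{\dd t}\W_p(\rho_\eff,\rho)\ls\|\nabla b\|_{L^\infty}\W_p(\rho_\eff,\rho)+\|u_\eff-u\|_{L^p(\rho_\eff)}$ with $\|u_\eff-u\|_{L^p(\rho_\eff)}\le\|\rho_\eff\|_{L^\infty}^{1/p}\|u_\eff-u\|_{L^p}$; inserting the previous bound produces the \emph{closed linear} inequality $\tfrac{\dd}{\dd t}\W_p(\rho_\eff,\rho)\ls\W_p(\rho_\eff,\rho)+\phi_N^2$ with zero initial value, so $\W_p(\rho_\eff(t),\rho(t))\ls\phi_N^2$ and then, feeding back, $\|u_\eff(t)-u(t)\|_{L^p}\ls\phi_N^2$ on $[0,T_\ast]$, which is \eqref{eq:prop_continuous}.

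\smallskip
\noindent\emph{Main obstacle.} I expect the hard part to be Step~2 and, specifically, the absence of an $L^\infty$-theory for the Stokes system: the term $5\phi_N\dv(\rho_\eff\,ew)$ is a Calder\'on--Zygmund operator acting on $ew$, which is not bounded on $L^\infty$. This forces the whole comparison into $L^p$ with $p<\infty$ (explaining the restriction in the statement) and rules out a direct flow-map comparison of $b_\eff$ and $b$ in the sup norm; one has to route the argument through an estimate of $\rho_\eff-\rho$ in a negative Sobolev/Wasserstein norm, which is exactly what keeps the final Grönwall inequality linear rather than sublinear — a sublinear inequality would fail to propagate the $\phi_N^2$-smallness. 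A secondary difficulty, already absorbed into the a~priori bounds invoked at the outset, is the uniform-in-$N$ propagation of $W^{1,\infty}$-regularity of the densities, which is coupled via Schauder-type estimates to the Lipschitz regularity of the velocity fields.
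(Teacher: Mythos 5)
Your two-step structure — first showing all the $5\phi_N$-corrections are $O(\phi_N)$, then closing a linear Gr\"onwall inequality in $\W_p$ after a Loeper/negative-Sobolev estimate for the $\Phi\ast((\rho_\eff-\rho)g)$ piece — is exactly the paper's strategy, and the key analytic inputs (well-posedness/regularity of the three macroscopic systems, the $W^{-1,q}$--$\W_q$ comparison, and Lemma~\ref{lem:Wass.deriv}) are all the same. Where you diverge is in the implementation of Step~1 and in the choice of elliptic operator in Step~2. The paper controls $\W_\infty(\rho_\eff,\tau)$ through the flow-map construction of Lemma~\ref{lem:Wass.deriv}, bounding $\|u_\eff-v\|_{L^\infty}$ by writing $\Phi\ast((\rho_\eff-\tau)g)=\int(\Phi(\cdot-T(y))-\Phi(\cdot-y))\tau(y)\,\dd y$ and invoking the Lipschitz bound~\eqref{eq:Phi.Lipschitz}; you instead run a direct Gr\"onwall on $\|\rho-\tau\|_{L^1\cap L^\infty}$ and $\|\rho_\eff-\tau\|_{L^1\cap L^\infty}$ using $\partial_t(\rho-\tau)+b\cdot\nabla(\rho-\tau)=-(b-b_\tau)\cdot\nabla\tau$. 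This is more elementary but costs you the full $W^{1,\infty}$-regularity of $\tau$ (which is available here) and a Schauder-type bound to control the Calder\'on--Zygmund term $\nabla^2\Phi\ast(\tau ev)$ in $L^\infty$, whereas the paper's Wasserstein version needs only the Lipschitz kernel estimate. In Step~2 the paper keeps the constant-coefficient Laplacian on the left and decomposes the source into $w_1,w_2,w_3$, while you write the variable-coefficient operator $-\dv((2+5\phi_N\rho_\eff)e\,\cdot)$ and so need perturbative $L^p$-theory for it (which the paper anyway develops in the proof of Theorem~\ref{thm:effective}); both choices give the same $\|u_\eff-u\|_{L^p}\lesssim\W_p(\rho_\eff,\rho)+\phi_N^2$ after the Loeper estimate. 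Your identification of the lack of an $L^\infty$ Stokes theory as the obstruction to $p=\infty$ matches the paper's own remark. In short: correct, same skeleton, a slightly different and somewhat more hands-on Step~1, and a cosmetically different operator in Step~2.
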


The proof is based on the following two insights. First, by subtracting \eqref{eq:v.tau} and \eqref{eq:rho_eff.u_eff} we have 
\begin{align}  
\left\{\begin{array}{rl}
-\Delta (v-u_\eff)+\nabla p-(\tau-\rho_\eff) g&=5\phi_N\dv(\rho_\eff eu_\eff),\\
\partial_t \tau+(v+\bra{6 \pi \gamma_N}^{-1} g)\cdot \nabla \tau&=0,\\
\partial_t \rho_\eff+(u_\eff+\bra{6 \pi \gamma_N}^{-1} g)\cdot \nabla \rho_\eff&=0,\\
(\tau-\rho_\eff)(0)&=0.
\end{array}\right.
\end{align}
 The only source of difference between $(\tau,v)$ and $(\rho_\eff,u_\eff)$ is the source term on the right-hand side in the first equation. This term is at most of order $\phi_N$. Thus, we can expect that $\tau-\rho_\eff$ and $v-u_\eff$ are of order $\phi_N$. 

In a second step, we subtract \eqref{eq:u.rho} and \eqref{eq:rho_eff.u_eff} to obtain 
\begin{align}  
\left\{\begin{array}{rl}
-\Delta (u-u_\eff)+\nabla p-(\rho-\rho_\eff) g&=5\phi_N\dv(\rho_\eff eu_\eff-\tau ev),\\
\partial_t \rho+(u+\bra{6 \pi \gamma_N}^{-1} g)\cdot \nabla \rho&=0,\\
\partial_t \rho_\eff+(u_\eff+\bra{6 \pi \gamma_N}^{-1} g)\cdot \nabla \rho_\eff&=0,\\
(\rho-\rho_\eff)(0)&=0.
\end{array}\right.
\end{align}
We can argue as above to see that the only source of difference is the term on the right-hand side in the first equation. This time the right-hand side is of order $\phi_N^2$ where we used the first step. Thus we can expect $\rho-\rho_\eff$ and $u-u_\eff$ to be of order $\phi_N^2$.

We finish this section by stating the well-posedness results for systems \eqref{eq:v.tau}, \eqref{eq:u.rho} and \eqref{eq:rho_eff.u_eff} together with regularity results that will be used to  estimate the right-hand sides above.
\begin{thm} \label{th:hoefer} 
Assume that $\rho_0\in L^\infty\cap \fP$. There is a unique solution for all times to \eqref{eq:v.tau} and for all $3< p< \infty$:
\begin{align}
  \norm{\tau}_{L^\infty(L^\infty)}&\le \norm{\rho_0}_{L^\infty},\\
  \norm{v}_{L^\infty(W^{2,p})}&\le C \norm{\rho_0}_{L^\infty},
\end{align}
where $C$ only depends on $p$. 
If, in addition, $\rho_0 \in W^{1,1} \cap W^{1,\infty}$, then, for all $T_\ast > 0$
\begin{align}
  \norm{\tau}_{L^\infty(0,T_\ast; W^{1,1} \cap W^{1,\infty})}&\le C,\\
  \norm{v}_{L^\infty(W^{3,p})}&\le C,
\end{align}
where $C$ only depends on $p$, $T_\ast$ and $\norm{\rho_0}_{W^{1,1}\cap W^{1,\infty}}$.

Moreover, let $\tau_1,v_1$ and $\tau_2,v_2$ be two solutions of \eqref{eq:v.tau} corresponding to initial data $\tau_0^1,\tau_0^2\in L^\infty \cap \fP$. Then, for $1 \leq p\le \infty$:
 \begin{align*}
 \W_p(\tau_1(t),\tau_2(t))\le \W_p(\tau_0^1,\tau_0^2)e^{Ct},
 \end{align*}
 where $C$ only depends on $\norm{\tau_0^1}_{L^\infty}$ and $\norm{\tau_0^2}_{L^\infty}$,
  and for all $p> 3/2$ and $1/q \leq 1/ p + 1/3$
 \begin{align}
 	 \norm{u_1-u_2}_{L^p}\le C\W_q(\rho_0^1,\rho_0^2)e^{Ct},
 \end{align}
where $C$ depends on $\norm{\rho_0^1}_{L^\infty},\norm{\rho_0^2}_{L^\infty},p$.
\end{thm}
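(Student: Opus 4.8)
The plan is to treat \eqref{eq:v.tau} as a linear transport equation $\partial_t\tau+b\cdot\na\tau=0$ with divergence-free drift $b=v+(6\pi\gamma_N)^{-1}g$, coupled to the stationary Stokes problem which we solve explicitly through the Oseen tensor $\Phi$ of \eqref{eq:Phi}, namely $v=\Phi\ast(\tau g)$. For existence and uniqueness I would run a Picard iteration: from $\tau^{(0)}=\rho_0$, set $v^{(n)}=\Phi\ast(\tau^{(n)}g)$, let $\Phi^{(n)}_t$ be the flow of $b^{(n)}:=v^{(n)}+(6\pi\gamma_N)^{-1}g$, and put $\tau^{(n+1)}(t)=(\Phi^{(n)}_t)_{\#}\rho_0$. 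Since $\dv b^{(n)}=0$ the flow preserves Lebesgue measure, whence $\norm{\tau^{(n+1)}(t)}_{L^q}=\norm{\rho_0}_{L^q}$ for every $q$ and $t$; whole-space Stokes regularity gives, for $3<p<\infty$, $\norm{v^{(n)}}_{W^{2,p}}\ls\norm{\tau^{(n)}g}_{L^p}\ls\norm{\rho_0}_{L^\infty}$, and here $p>3$ is exactly what forces $v^{(n)}\in L^p(\R^3)$ (the single-layer potential decays like $|x|^{-1}$), so $v^{(n)}\in W^{2,p}(\R^3)\hookrightarrow C^{1,1-3/p}$ and $b^{(n)}$ is globally Lipschitz with constant $\ls\norm{\rho_0}_{L^\infty}$. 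To compare iterates, note $v^{(n+1)}-v^{(n)}=\Phi\ast\big((\tau^{(n+1)}-\tau^{(n)})g\big)$ with $\tau^{(n+1)}-\tau^{(n)}=(\Phi^{(n)}_t)_{\#}\rho_0-(\Phi^{(n-1)}_t)_{\#}\rho_0$; splitting the convolution into a near-diagonal part, where $|\Phi|\ls|x|^{-1}$ is Lebesgue-integrable in $\R^3$, and a far part, where one uses $|\na\Phi|\ls|x|^{-2}$ with a mean-value estimate, gives $\norm{v^{(n+1)}-v^{(n)}}_{L^\infty}\ls\norm{\rho_0}_{L^\infty}\norm{\Phi^{(n)}_t-\Phi^{(n-1)}_t}_{L^\infty}$, while Gr\"onwall on the flows yields $\norm{\Phi^{(n)}_t-\Phi^{(n-1)}_t}_{L^\infty}\ls e^{CT}\int_0^t\norm{v^{(n)}-v^{(n-1)}}_{L^\infty}$. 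Thus the scheme contracts in $C([0,T];L^\infty(\R^3))$ for $T=T(\norm{\rho_0}_{L^\infty})$ small, and the time-uniform bound $\norm{\tau(t)}_{L^\infty}=\norm{\rho_0}_{L^\infty}$ upgrades the local solution to a global one; the first two displayed estimates are then immediate.

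For the higher regularity when $\rho_0\in W^{1,1}\cap W^{1,\infty}$, I would differentiate the transport equation: $w:=\na\tau$ satisfies $\tfrac{\d}{\d t}\big(w\circ\Phi^b_t\big)=-\big((\na v)^{T}w\big)\circ\Phi^b_t$ along characteristics, so Gr\"onwall in $L^1$ and in $L^\infty$ with the factor $\norm{\na v}_{L^\infty}\ls\norm{v}_{W^{2,p}}\ls\norm{\rho_0}_{L^\infty}$ (again $p>3$) gives $\norm{\na\tau(t)}_{L^1\cap L^\infty}\le\norm{\na\rho_0}_{L^1\cap L^\infty}e^{Ct}$; with mass conservation this is the stated bound on $\tau$, and Stokes regularity one order higher, $\norm{v}_{W^{3,p}}\ls\norm{\tau g}_{W^{1,p}}\ls\norm{\tau}_{W^{1,1}}^{1/p}\norm{\tau}_{W^{1,\infty}}^{1-1/p}$, gives the bound on $v$.

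For the stability estimates, take two solutions $(\tau_i,v_i)$ and an optimal coupling $\pi_0$ of $\tau_0^1,\tau_0^2$ for $\W_p$; then $\pi_t:=(\Phi^{b_1}_t,\Phi^{b_2}_t)_{\#}\pi_0$ couples $\tau_1(t),\tau_2(t)$, so $\delta(t):=\big(\int|\Phi^{b_1}_t x-\Phi^{b_2}_t y|^p\,\d\pi_0(x,y)\big)^{1/p}\ge\W_p(\tau_1(t),\tau_2(t))$. Differentiating along characteristics, using $|b_1(\Phi^{b_1}_t x)-b_2(\Phi^{b_2}_t y)|\le\norm{\na v_1}_{L^\infty}|\Phi^{b_1}_t x-\Phi^{b_2}_t y|+|(v_1-v_2)(\Phi^{b_2}_t y)|$, Minkowski's inequality, and the identity $\int|(v_1-v_2)(t,\Phi^{b_2}_t y)|^p\,\d\pi_0=\int|v_1-v_2|^p(t)\,\d\tau_2(t)$, one obtains
\begin{align}
\delta(t)\le e^{Ct}\W_p(\tau_0^1,\tau_0^2)+\int_0^t e^{C(t-s)}\,\norm{(v_1-v_2)(s)}_{L^p(\tau_2(s)\,\d z)}\,\d s .
\end{align}
The crux is then the Loeper--Hauray-type kernel estimate for the Oseen tensor, $\norm{(v_1-v_2)(s)}_{L^p(\tau_2(s)\,\d z)}=\norm{\Phi\ast((\tau_1-\tau_2)(s)\,g)}_{L^p(\tau_2(s)\,\d z)}\ls\W_p(\tau_1(s),\tau_2(s))$ with constant depending only on $\norm{\tau_0^1}_{L^\infty},\norm{\tau_0^2}_{L^\infty}$ and $T_\ast$, proved by the same near/far splitting applied to an optimal $\W_p$-coupling of $\tau_1(s),\tau_2(s)$; once this is in hand, the displayed inequality closes a Gr\"onwall loop and gives $\W_p(\tau_1(t),\tau_2(t))\le\W_p(\tau_0^1,\tau_0^2)e^{Ct}$. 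For the fluid-velocity estimate one uses the companion bound $\norm{\Phi\ast((\mu-\nu)g)}_{L^p(\R^3)}\ls\W_q(\mu,\nu)$ (same splitting; valid for $p>3/2$, $\tfrac1q\le\tfrac1p+\tfrac13$, the threshold $p>3/2$ being the integrability exponent of the $|x|^{-2}$ far-field decay in $\R^3$), which combined with the $\W_q$-stability just proved yields $\norm{(u_1-u_2)(t)}_{L^p}\ls\W_q(\tau_1(t),\tau_2(t))\le C\W_q(\rho_0^1,\rho_0^2)e^{Ct}$. These estimates are the Stokes analogues of those in \cite{Hofer18MeanField,Mecherbet19,Hauray09}.

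The step I expect to be the main obstacle is precisely this family of Oseen-kernel estimates --- controlling $L^p$-norms (against Lebesgue measure, and against the solution density) of the singular velocity field $\Phi\ast((\mu-\nu)g)$ by a Wasserstein distance of $\mu,\nu$, using only their $L^\infty$-bounds and with the sharp exponent relation --- since the gradient kernel $\na\Phi\sim|x|^{-2}$ is borderline non-integrable in dimension $3$; in tandem one must keep every constant uniform in $N$ so that the Gr\"onwall loops, where the velocity bound re-enters the characteristic ODE, actually close on $[0,T_\ast]$. The remaining ingredients --- the contraction, the propagation of Sobolev norms along the measure-preserving flow, and the elliptic (Calder\'on--Zygmund) estimates for Stokes --- are routine.
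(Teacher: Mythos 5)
Your proposal is correct and follows essentially the same route as the paper: a fixed-point/iteration argument for existence, propagation of $\nabla\tau$ along the measure-preserving flow plus Stokes regularity for the higher norms, and flow-map couplings combined with the Lipschitz estimate $|\Phi(x)-\Phi(y)|\lesssim|x-y|\bigl(|x|^{-2}+|y|^{-2}\bigr)$ and Gr\"onwall for the Wasserstein stability. The only substantive divergence is the $L^p$ bound on $u_1-u_2$, where you invoke a direct Hardy--Littlewood--Sobolev-type kernel estimate (note that plain Minkowski fails since $|x|^{-2}\notin L^p(\R^3)$ globally, so HLS with $1/q=1/p+1/3$ is really needed) while the paper takes the dual route $\norm{u_1-u_2}_{L^p}\ls\norm{\nabla(u_1-u_2)}_{L^q}\ls\norm{\rho_1-\rho_2}_{W^{-1,q}}\ls\W_q(\rho_1,\rho_2)$ via its Loeper-type Proposition~\ref{prop:Loeper}; a minor slip is your remark that $\nabla\Phi\sim|x|^{-2}$ is ``borderline non-integrable'' in $\R^3$ --- it is locally integrable, and the interaction is subcritical ($\alpha=1<d-1$), which is precisely why these estimates close.
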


\begin{thm} \label{th:intermediate}
Assume that $\rho_0\in W^{1,1}\cap W^{1,\infty}\cap \fP$. For given $T_\ast>0$ a unique solution to \eqref{eq:u.rho} exists on $[0,T_\ast]$ and for all $3<p<\infty$:
\begin{align}
  \norm{\rho}_{L^\infty(0,T_\ast;W^{1,1}\cap W^{1,\infty})}&\ls C,\\
  \norm{u}_{L^\infty(0,T_\ast;W^{2,p})}&\ls C,
\end{align}
where $C$ depends on $T_\ast, \norm{\rho_0}_{W^{1,1}\cap W^{1,\infty}}$.

 Moreover, let  $\tau,v$ be determined by some $\rho_0\in W^{1,1}\cap W^{1,p}\cap \fP$ and let $\rho_1,u_1$ and $\rho_2,u_2$ be two solutions of \eqref{eq:u.rho}
corresponding to initial data $\rho_0^1,\rho_0^2\in W^{1,\infty}\cap W^{1,1}\cap \fP$. Then, for all $1 \leq p\le \infty$:
 \begin{align*}
 \W_p(\rho_1(t),\rho_2(t))\le \W_p(\rho_0^1,\rho_0^2)e^{Ct},
 \end{align*}
 where $C$ depends on $\norm{\rho_0^1}_{W^{1,1}\cap W^{1,\infty}},\norm{\rho_0^2}_{L^\infty}$,
 and for all $p> 3/2$ and $1/q \leq 1/ p + 1/3$.
 \begin{align}
 	 \norm{u_1-u_2}_{L^p}\le C\W_q(\rho_0^1,\rho_0^2)e^{Ct},
 \end{align}
where $C$ depends on $\norm{\rho_0^1}_{L^\infty},\norm{\rho_0^2}_{L^\infty},p$.
\end{thm}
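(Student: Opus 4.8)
The plan is to treat \eqref{eq:u.rho} as a transport--Stokes system with a \emph{prescribed} forcing. Since $\tau,v$ solve \eqref{eq:v.tau}, they depend only on $\rho_0$ and not on the unknown $\rho$, so $5\phi_N\dv(\tau e v)$ is a fixed source term; by Theorem~\ref{th:hoefer}, under $\rho_0\in W^{1,1}\cap W^{1,\infty}\cap\fP$ one has $v\in L^\infty(0,T_\ast;W^{3,p})$ and $\tau\in L^\infty(0,T_\ast;W^{1,1}\cap W^{1,\infty})$, hence $ev\in L^\infty(W^{2,p})$ and $\tau\,ev\in L^\infty(W^{1,p})$ for every $3<p<\infty$, so $\|\phi_N\dv(\tau e v)\|_{L^\infty(0,T_\ast;L^p)}\lesssim\phi_N$; in particular this source is, thanks to the smoothing of the Stokes operator on the $v$-side, no more singular than $\rho g$. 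With this observation, I would obtain existence and uniqueness for \eqref{eq:u.rho} on $[0,T_\ast]$ by the fixed-point scheme used for \eqref{eq:v.tau} in \cite{Hofer18MeanField}: given $\rho$, the first two lines of \eqref{eq:u.rho} form a stationary Stokes system whose Calder\'on--Zygmund theory gives $\|u\|_{W^{2,p}}\lesssim\|\rho\|_{L^p}+\phi_N$; for $p>3$ this embeds into $W^{1,\infty}$, so the transport velocity $u+(6\pi\gamma_N)^{-1}g$ is globally Lipschitz and divergence free, its Cauchy--Lipschitz flow $Y$ is well defined, and $\rho(t):=Y(t)_\#\rho_0$ solves the third line. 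One then closes the loop by a contraction on a short interval and extends to $[0,T_\ast]$ via the a priori bounds below.

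For the a priori estimates, incompressibility of the transport velocity conserves all Lebesgue norms of $\rho$, in particular $\|\rho(t)\|_{L^\infty}=\|\rho_0\|_{L^\infty}$ and $\|\rho(t)\|_{L^1}=1$. Differentiating the transport equation one gets $\partial_t\nabla\rho+(u+(6\pi\gamma_N)^{-1}g)\cdot\nabla(\nabla\rho)=-(\nabla u)^{T}\nabla\rho$, hence $\tfrac{\d}{\d t}\|\nabla\rho(t)\|_{L^q}\leq\|\nabla u(t)\|_{L^\infty}\|\nabla\rho(t)\|_{L^q}$ for $q\in\{1,\infty\}$; since $\|\nabla u\|_{L^\infty}\lesssim\|u\|_{W^{2,p}}\lesssim\|\rho_0\|_{L^\infty}+\phi_N$ is bounded on $[0,T_\ast]$, Gr\"onwall's lemma yields $\|\rho\|_{L^\infty(0,T_\ast;W^{1,1}\cap W^{1,\infty})}\leq C$ and then $\|u\|_{L^\infty(0,T_\ast;W^{2,p})}\leq C$, with $C=C(T_\ast,\|\rho_0\|_{W^{1,1}\cap W^{1,\infty}})$, as claimed.

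The stability estimates I would reduce to the ones already known for \eqref{eq:v.tau}. For two solutions $(\rho_1,u_1),(\rho_2,u_2)$ of \eqref{eq:u.rho} sharing the same $\tau,v$, the forcing cancels in the difference of the Stokes equations, so $u_1-u_2=\Phi\ast((\rho_1-\rho_2)g)$ with $\Phi$ as in \eqref{eq:Phi}, exactly the Oseen kernel of the basic transport--Stokes system. The $\W_p$-stability then follows the Dobrushin--Loeper argument: pushing an optimal $\W_p$-coupling of $\rho_0^1,\rho_0^2$ forward by the two Lipschitz flows and differentiating in time gives $\tfrac{\d}{\d t}\W_p(\rho_1(t),\rho_2(t))\lesssim\|\nabla u_1\|_{L^\infty}\W_p(\rho_1,\rho_2)+\|u_1-u_2\|_{L^\infty}$; since $\Phi$ obeys \eqref{eq:C_alpha} with $\alpha=1<d-1=2$, the potential-theoretic estimates of \cite{Hauray09,Hofer18MeanField} bound $\|u_1-u_2\|_{L^\infty}$ by a constant depending on $\|\rho_0^i\|_{L^\infty}$ times $\W_1(\rho_1,\rho_2)\leq\W_p(\rho_1,\rho_2)$, and Gr\"onwall closes the estimate. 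For the $L^p$-bound I would again use $u_1-u_2=\Phi\ast((\rho_1-\rho_2)g)$, controlling $\rho_1-\rho_2$ in a negative Sobolev space by $\W_q(\rho_1,\rho_2)$ and combining with $\nabla\Phi\in L^{3/2,\infty}$ and weak Young's inequality (for $1/q \leq 1/p + 1/3$), together with the $\W_q$-stability just proven.

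I expect the bulk of the work, rather than any essential obstacle, to lie in the bookkeeping of the prescribed source $5\phi_N\dv(\tau e v)$: one must check it stays in $L^\infty(0,T_\ast;L^p)$ so that it does not spoil the closed-loop estimate for $\|u\|_{W^{2,p}}$, which is precisely where Theorem~\ref{th:hoefer} and the hypothesis $\rho_0\in W^{1,1}\cap W^{1,\infty}$ are used; the gain of regularity on the $v$-side makes $\dv(\tau ev)$ effectively no more singular than $\rho g$, so the argument goes through. Since this term is independent of the initial datum, it cancels in every difference equation and plays no role in the stability estimates, so beyond this point the coupled a priori and stability arguments are a direct transcription of those for \eqref{eq:v.tau} in \cite{Hofer18MeanField} and \cite{Hauray09}.
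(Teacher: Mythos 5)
The existence and a priori estimate part of your proposal follows essentially the same path as the paper: treat $5\phi_N\dv(\tau\,ev)$ as a fixed source of regularity $L^\infty(0,T_\ast;L^p)$ (granted by Theorem~\ref{th:hoefer}), run the fixed-point scheme from the proof of Theorem~\ref{thm:effective}, transport-conserve the Lebesgue norms of $\rho$, propagate $\nabla\rho$ via the differentiated transport equation with Gr\"onwall, and close through the Calder\'on--Zygmund bound $\|u\|_{W^{2,p}}\lesssim\|\rho\|_{L^p}+\phi_N$. That part is fine. Your final $L^p$-estimate for $u_1-u_2$ via Proposition~\ref{prop:Loeper}, $\|\nabla(u_1-u_2)\|_{L^q}\lesssim\|\rho_1-\rho_2\|_{W^{-1,q}}$, and Sobolev embedding is also the paper's argument.

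There is, however, a genuine gap in your $\W_p$-stability argument for $1\leq p\leq3$. You claim that the potential-theoretic estimates bound
$\|u_1-u_2\|_{L^\infty}\lesssim\W_1(\rho_1,\rho_2)\leq\W_p(\rho_1,\rho_2)$.
This is false. Writing $u_1-u_2=\int\bigl(\Phi(x-y)-\Phi(x-Ty)\bigr)g\,\rho_1(y)\,\dd y$ with $T$ an optimal map, the $\abs{x-y}^{-2}$-singularity of $\nabla\Phi$ forces an $L^\infty$-bound of the form
\begin{align}
\abs{u_1(x)-u_2(x)}\leq\int\abs{y-Ty}\Bigl(\tfrac{1}{\abs{x-y}^2}+\tfrac{1}{\abs{x-Ty}^2}\Bigr)\rho_1(y)\,\dd y,
\end{align}
and closing this in $L^\infty$ against a bounded density requires either $\abs{y-Ty}\leq\W_\infty$ uniformly (giving the $\W_\infty$-bound) or a H\"older split $(\int\abs{y-Ty}^r\rho_1)^{1/r}(\int\abs{x-y}^{-2r'}\rho_1)^{1/r'}$ which needs $2r'<3$, hence $r>3$. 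In short, $\|u_1-u_2\|_{L^\infty}\lesssim\W_r$ holds for $r>3$ or $r=\infty$, but not for $r=1$; a simple example with a displacement field supported on a ball of radius $\varepsilon$ around $x$ shows the ratio $\|u_1-u_2\|_{L^\infty}/\W_1$ blows up as $\varepsilon\to0$. Consequently the Gr\"onwall inequality $\tfrac{\dd}{\dd t}\W_p\lesssim\|\nabla u_1\|_{L^\infty}\W_p+\|u_1-u_2\|_{L^\infty}$ does not close for $1\leq p\leq3$: you would be dominating the left-hand $\W_p$-term by the larger quantity $\W_\infty$ on the right. The paper avoids this by never passing through $\|u_1-u_2\|_{L^\infty}$: it estimates the weighted $L^p$-norm $\int\abs{u_2(Tx)-u_1(x)}^p\rho_1(x)\,\dd x$ directly, inserts the Lipschitz bound \eqref{eq:Phi.Lipschitz} for $\Phi$, and applies Jensen's inequality with respect to the finite measure $(\abs{Tx-Ty}^{-2}+\abs{x-y}^{-2})\rho_1(y)\,\dd y$ to produce a factor $\W_p^p$ together with a constant $(1+\|\rho_1\|_\infty+\|\rho_2\|_\infty)$. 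This is the essential refinement that makes the estimate valid for the full range $1\leq p\leq\infty$. Your argument as written only yields the claim for $p>3$ and $p=\infty$.
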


\begin{thm} \label{thm:effective}
Assume that $\rho_0\in W^{1,1}\cap W^{1,\infty}\cap \fP$. There exists $\phi_0$ such that for all $\phi_N\le \phi_0$ and for given $T_\ast>0$ a unique solution to \eqref{eq:rho_eff.u_eff} exists on $[0,T_\ast]$ and such that for all $3< p<\infty$:
\begin{align}
  \norm{\rho_\eff}_{L^\infty(0,T_\ast;W^{1,1}\cap W^{1,\infty})}&\ls C,\\
  \norm{u_\eff}_{L^\infty(0,T_\ast;W^{2,p})}&\ls C,
\end{align}
where $C$ depends on $T_\ast, \norm{\rho_0}_{W^{1,1}\cap W^{1,\infty}}$.
\end{thm}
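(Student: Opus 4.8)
The plan is to construct $(\rho_\eff,u_\eff)$ by a fixed-point iteration on the density in the class $C([0,T_\ast];\fP(\R^3))\cap L^\infty(0,T_\ast;W^{1,1}\cap W^{1,\infty})$, using the smallness of $\phi_N$ to treat the variable-viscosity Stokes operator $u\mapsto -\dv((2+5\phi_N\rho)eu)+\nabla p$ as a perturbation of $-2\Delta u+\nabla p$. The structural fact that drives the whole argument is that the transporting field $u_\eff+(6\pi\gamma_N)^{-1}g$ is divergence free, so any solution preserves every $L^q$ norm of the density, $\|\rho_\eff(t)\|_{L^q}=\|\rho_0\|_{L^q}$ for $q\in[1,\infty]$; in particular the viscosity stays pinched between $2$ and $2+5\phi_N\|\rho_0\|_{L^\infty}$ uniformly in time, so the ellipticity of the Stokes operator is controlled independently of the (a priori unknown) solution.

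\textbf{Step 1 (Stokes solve).} For a given $\rho\in L^\infty(0,T_\ast;W^{1,\infty})$ with $\|\rho(t)\|_{L^\infty}\le\|\rho_0\|_{L^\infty}$, rewrite the first two lines of \eqref{eq:rho_eff.u_eff} as $-2\Delta u+\nabla p=\rho g+5\phi_N\dv(\rho\, eu)$, $\dv u=0$, and note $\dv(\rho\, eu)=\nabla\rho\cdot eu+\tfrac12\rho\Delta u$, whence $\|\dv(\rho\,eu)\|_{L^p}\ls\|\rho\|_{W^{1,\infty}}\|u\|_{W^{2,p}}$. Therefore, for $3<p<\infty$, the map sending $w$ to the constant-coefficient Stokes solution with right-hand side $\rho g+5\phi_N\dv(\rho\, ew)$ is a contraction on $L^\infty(0,T_\ast;W^{2,p})$ whenever $\phi_N\|\rho\|_{L^\infty(W^{1,\infty})}$ is below a threshold depending only on $p$; this uses the classical $W^{2,p}$-theory for the Stokes system on $\R^3$ (and the decay of the fundamental solution, together with $\rho\in L^1$, to put $u$ itself in $L^p$ for $p>3$). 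We obtain a unique $u\in L^\infty(0,T_\ast;W^{2,p})$ with $\|u\|_{L^\infty(W^{2,p})}\le C\|\rho\|_{L^\infty(L^1\cap L^p)}\le C\|\rho_0\|_{L^1\cap L^\infty}$, $C=C(p)$; since $p>3$ this also gives $u(t)\in C^{1,\alpha}$ with the same bound.

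\textbf{Step 2 (transport solve and closing the a priori estimate).} Given such $u$, the field $u+(6\pi\gamma_N)^{-1}g$ is $C^{1,\alpha}$ in space, so its flow $\Psi_t$ is a well-defined volume-preserving diffeomorphism and $\rho(t):=\rho_0\circ\Psi_t^{-1}$ is the unique solution of the transport equation; it remains a probability density and $\|\rho(t)\|_{L^q}=\|\rho_0\|_{L^q}$ for every $q$. Differentiating the transport equation and integrating along characteristics, $\|\nabla\rho(t)\|_{L^q}\le\|\nabla\rho_0\|_{L^q}\exp\big(\int_0^t\|\nabla u(s)\|_{L^\infty}\,\dd s\big)$ for $q\in\{1,\infty\}$; by Step 1, $\|\nabla u(s)\|_{L^\infty}\le C\|u(s)\|_{W^{2,p}}\le C\|\rho_0\|_{L^1\cap L^\infty}$ with $C$ independent of $\phi_N$, so $\|\rho\|_{L^\infty(0,T_\ast;W^{1,1}\cap W^{1,\infty})}\le M$ with $M=M(T_\ast,\|\rho_0\|_{W^{1,1}\cap W^{1,\infty}})$ independent of $\phi_N$. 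This closes the scheme: we fix $\phi_0=\phi_0(p,M)$, equivalently $\phi_0=\phi_0(p,T_\ast,\|\rho_0\|_{W^{1,1}\cap W^{1,\infty}})$, so small that Step 1 applies to every density with $W^{1,\infty}$-norm $\le M$, and a standard continuation argument converts the a priori bound into existence up to the prescribed $T_\ast$.

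\textbf{Step 3 (fixed point, uniqueness, and the main obstacle).} Iterate $\rho^{(0)}:=\rho_0$, $\rho^{(n)}\mapsto u^{(n)}$ (Step 1) $\mapsto\rho^{(n+1)}$ (Step 2); by Steps 1--2 the iterates stay bounded in $L^\infty(0,T_\ast;W^{1,1}\cap W^{1,\infty})$ and $(u^{(n)})$ in $L^\infty(0,T_\ast;W^{2,p})$. Convergence and uniqueness follow from a contraction in a weaker topology such as $C([0,T];\W_1)$: the difference $\rho^{(n+1)}-\rho^{(n)}$ solves a transport equation forced by $(u^{(n)}-u^{(n-1)})\cdot\nabla\rho^{(n)}$, and the stability of the perturbed Stokes operator --- proven as in Theorem \ref{th:intermediate}, with the variable viscosity again absorbed by the Neumann series of Step 1 --- bounds $\|u^{(n)}-u^{(n-1)}\|_{L^p}\ls\W_1(\rho^{(n)},\rho^{(n-1)})$, so a Grönwall argument gives contraction on a short interval, iterated up to $T_\ast$ using the uniform bounds. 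The limit $(\rho_\eff,u_\eff)$ solves \eqref{eq:rho_eff.u_eff} and inherits the stated bounds $\|\rho_\eff\|_{L^\infty(W^{1,1}\cap W^{1,\infty})}\ls C$, $\|u_\eff\|_{L^\infty(W^{2,p})}\ls C$; since $\rho_0\in W^{1,\infty}$, classical characteristics suffice and no DiPerna--Lions theory is needed. The delicate point --- and the reason the theorem needs only $\phi_N\le\phi_0$ rather than a decay rate --- is the circularity between Steps 1 and 2: the Stokes smallness threshold depends on $\|\rho\|_{W^{1,\infty}}$, which is produced only by the transport step, whose Grönwall exponent involves $\|\nabla u\|_{L^\infty}$; this is resolved precisely because $\|\nabla u\|_{L^\infty}$ is controlled by the conserved quantity $\|\rho_0\|_{L^1\cap L^\infty}$ alone, making the bound $M$ on $\|\rho\|_{W^{1,\infty}}$ an explicit $\phi_N$-independent constant. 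Carrying out the variable-coefficient Stokes $W^{2,p}$-regularity and its stability counterpart uniformly for $\phi_N\le\phi_0$ is the bulk of the work.
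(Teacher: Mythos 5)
Your proposal follows the paper's route exactly: Neumann-series inversion of the variable-viscosity Stokes operator around $-2\Delta$, a pointwise transport estimate for $\nabla\rho$ along the flow, and a fixed-point argument in a Wasserstein metric. You also make explicit an estimate that the paper's inequality \eqref{eq:reg.est} records too crudely. Read literally, $\|u\|_{W^{1,\infty}}\lesssim\|\nu\|_{W^{1,1}\cap W^{1,p}}$ fed into \eqref{eq:grad.growth} yields a \emph{quadratic} Gronwall for $\|\rho\|_{W^{1,p}\cap W^{1,1}}$ and hence blow-up in a time of order $1/\|\rho_0\|_{W^{1,p}}$, independent of $\phi_N$. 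Your sharper observation --- that once the Neumann threshold $C\phi_N\|\rho\|_{W^{1,\infty}}<1/2$ is met, the resummed bound is $\|u\|_{W^{2,p}}\lesssim\|\rho\|_{L^1\cap L^p}=\|\rho_0\|_{L^1\cap L^p}$, a \emph{conserved} quantity --- is precisely what linearizes the Gronwall, gives a $\phi_N$-independent a priori bound $M=M(T_\ast,\|\rho_0\|_{W^{1,1}\cap W^{1,\infty}})$, and makes the choice $\phi_0=\phi_0(T_\ast,\|\rho_0\|_{W^{1,1}\cap W^{1,\infty}})$ consistent; it is also what the paper's closing argument silently relies on. (Your remark that classical characteristics replace DiPerna--Lions here is correct but inessential.)

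One step does not hold as stated: the contraction cannot be closed in $C([0,T];\W_1)$ with the bound $\|u^{(n)}-u^{(n-1)}\|_{L^p}\lesssim\W_1(\rho^{(n)},\rho^{(n-1)})$. Via Proposition \ref{prop:Loeper} and Stokes regularity one obtains $\|\nabla(u_1-u_2)\|_{L^q}\lesssim\|\rho_1-\rho_2\|_{W^{-1,q}}\lesssim\W_q(\rho_1,\rho_2)$ and then, by Sobolev, $\|u_1-u_2\|_{L^p}\lesssim\W_q$ only for $1/q=1/p+1/3$; Calder\'on--Zygmund for Stokes fails at $q=1$, so any $q>1$ is reachable but $\W_1$ is not. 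The paper accordingly contracts in $L^\infty(\W_p)$ for finite $p$ (and explicitly remarks that $\W_\infty$ is also unavailable for lack of an $L^\infty$-theory of the Stokes operator). Replacing $\W_1$ by $\W_p$ in your Step 3 repairs this, and the rest of your outline is sound.
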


\begin{rem}
A well-posedness result for the system \eqref{eq:v.tau} was proven with related but slightly different spaces in \cite{Hofer18MeanField}. A very similar result to Theorem \ref{th:hoefer} has been recently shown in the parallel contribution \cite{Mecherbet20}.

Note that in contrast to the situation in Theorem \ref{th:hoefer}, we cannot expect that $u, u_\eff\in W^{2,p}$ when we only have $\rho,\rho_\eff\in L^\infty\cap \fP$ in Theorems \ref{th:intermediate} and \ref{thm:effective}, since $\tau$ and $\rho_\eff$, respectively, appear inside the divergence on the left-hand side of the equation.


The restriction $3<p$ comes from the fact that the velocity fields are not in $L^p$ for smaller $p$. Interestingly enough the restriction is less severe for the stability statement. This is due to the fact that all appearing densities have mass one so that their difference has mass zero. This leads to a stronger decay of the corresponding difference of the velocity field allowing for lower integrability. As mentioned before, the restriction $p<\infty$ comes from the lack of an $L^\infty$ theory for the Stokes equation.

Since all appearing continuous densities satisfy a transport equation, their $L^\infty$-norm is conserved. Also, since the corresponding velocity fields are divergence-free, the transport equation is at the same time a continuity equation which guaranties conservation of the $L^1$-norm and hence probability densities stay probability densities as they evolve.
\end{rem}

\section{Proof of Theorem \ref{th:mean.field.general}} 
\label{sec:meanfieldproof}

We begin this section by proving the following lemma which provides the key estimate to our improvement compared to \cite[Theorem 2.1]{Hauray09}.
It allows to estimate discrete convolutions with singular kernels by exploiting closeness of the empirical measure to a continuous density. This lemma will also be used in the next section. 
\begin{lem} \label{lem:sums.Wasserstein}
Let $X_i \in \R^d$, $1 \leq i \leq N$ and $\sigma_N = \frac 1 N \sum \delta_{X_i}$.
Let $\sigma \in \mP(\R^d) 
\cap L^\infty(\R^d) $, fix any of the particles $X_i$ and consider $\J := \{j \neq i \colon |X_j  - X_i| \leq 
\lambda \}$. Then, for all $\lambda \geq \W_\infty(\sigma_N,\sigma)$ and all  $\beta \in (0,d)$
\begin{align}\label{eq:sums.Wasserstein.close}
	\frac 1 N \sum_{j \in \J} \frac 1 {|X_i - X_j|^\beta} 
	\lesssim \frac{ \|\sigma\|_{L^\infty}^{(d-\beta)/d}\lambda^{d-\beta}}{N^{\beta/d} \dmin^{\beta}}.
\end{align}
Furthermore,
\begin{align} \label{eq:sums.Wasserstein}
	\sup_i \frac 1 N \sum_{i \neq j} \frac 1 {|X_i - X_j|^\beta} 
	\lesssim 1+\|\sigma\|_{ L^\infty} 
	+\frac{ \|\sigma\|_{L^\infty}^{(d-\beta)/d}(\W_\infty(\sigma_N,\sigma))^{d-\beta}}{N^{\beta/d} \dmin^{\beta}}.
\end{align}
\end{lem}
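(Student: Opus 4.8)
The plan is to prove \eqref{eq:sums.Wasserstein.close} first and then derive \eqref{eq:sums.Wasserstein} from it by a dyadic decomposition. For \eqref{eq:sums.Wasserstein.close}, fix the particle $X_i$ and the index set $\J = \{j \neq i : |X_j - X_i| \leq \lambda\}$. The starting observation is the standard packing bound coming from the minimal distance: since the balls $B_{\dmin/2}(X_j)$ are pairwise disjoint and for $j \in \J$ all lie in $B_{\lambda + \dmin/2}(X_i) \subset B_{2\lambda}(X_i)$ (using $\dmin \ls \lambda$, which holds because $\lambda \geq \W_\infty(\sigma_N, \sigma) \gtrsim N^{-1/d} \gtrsim \dmin$ by \eqref{eq:discretization.error} and \eqref{eq:density.condition}), one gets $\#\J \ls (\lambda/\dmin)^d$. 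Then I would split $\J$ into dyadic shells $\J_k = \{j \in \J : 2^{-k-1}\lambda < |X_i - X_j| \leq 2^{-k}\lambda\}$ for $k \geq 0$ and estimate $\#\J_k$ in two ways: by the packing bound $\#\J_k \ls (2^{-k}\lambda/\dmin)^d$, and — this is the new ingredient — by using closeness to $\sigma$. Namely, the particles in $\J_k$ sit in $B_{2^{-k}\lambda}(X_i)$; under an optimal $\W_\infty$-coupling each is moved by at most $\W_\infty(\sigma_N,\sigma) \leq \lambda$, so the corresponding mass of $\sigma$ lies in a ball of radius $\ls 2^{-k}\lambda + \lambda \ls \lambda$... which is too crude. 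The correct bound is obtained by comparing instead against a slightly enlarged ball only when $2^{-k}\lambda \gtrsim \W_\infty(\sigma_N,\sigma)$: then $\frac{1}{N}\#\J_k \leq \sigma_N(B_{2^{-k}\lambda}(X_i)) \leq \sigma(B_{2^{-k}\lambda + \W_\infty}(X_i)) \ls \|\sigma\|_{L^\infty} (2^{-k}\lambda)^d$; for the finitely many scales with $2^{-k}\lambda \ls \W_\infty$ one uses the packing bound directly. The main point is that at \emph{every} scale we have $\frac{1}{N}\#\J_k \ls \min\{(2^{-k}\lambda/\dmin)^d / N,\ \|\sigma\|_{L^\infty}(2^{-k}\lambda)^d\}$ up to the caveat above; interpolating, $\frac{1}{N}\#\J_k \ls \|\sigma\|_{L^\infty}^{(d-\beta)/d} (2^{-k}\lambda)^{d} \cdot \big(\dmin^{-d} N^{-1}\big)^{?}$ — more cleanly, one uses the geometric bound $\tfrac1N \#\J_k \le C \dmin^{-d}N^{-1}(2^{-k}\lambda)^d$ combined with the fact that the number of shells is $\ls \log(\lambda/\dmin)$, but to avoid the log one should interpolate the two bounds on $\#\J_k$ as $\#\J_k \le C (2^{-k}\lambda)^\beta \dmin^{-\beta} \cdot \big(\|\sigma\|_{L^\infty} N\big)^{(d-\beta)/d}$...

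Let me restate the core computation cleanly. On shell $k$, $\sum_{j \in \J_k} |X_i - X_j|^{-\beta} \ls (2^{-k}\lambda)^{-\beta} \#\J_k$. I combine the geometric packing bound $\#\J_k \ls (2^{-k}\lambda)^d \dmin^{-d}$ with the density bound $\#\J_k \ls N \|\sigma\|_{L^\infty}(2^{-k}\lambda)^d$ (valid once $2^{-k}\lambda \gtrsim \W_\infty$) by taking the geometric mean with weights $\beta/d$ and $(d-\beta)/d$, giving $\#\J_k \ls (2^{-k}\lambda)^d \dmin^{-\beta} N^{(d-\beta)/d} \|\sigma\|_{L^\infty}^{(d-\beta)/d}$. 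Hence $\sum_{j \in \J_k}|X_i-X_j|^{-\beta} \ls (2^{-k}\lambda)^{d-\beta} \dmin^{-\beta} N^{(d-\beta)/d}\|\sigma\|_{L^\infty}^{(d-\beta)/d}$, and since $d - \beta > 0$ the sum over $k \geq 0$ converges and is dominated by the $k=0$ term, yielding exactly the right-hand side of \eqref{eq:sums.Wasserstein.close} after dividing by $N$. The scales with $2^{-k}\lambda \ls \W_\infty(\sigma_N,\sigma)$ are handled by the pure packing bound $\#\J_k \ls (2^{-k}\lambda/\dmin)^d$; summing $(2^{-k}\lambda)^{d-\beta}\dmin^{-d}$ over those scales is again dominated by its largest term, which is $\ls \W_\infty^{d-\beta}\dmin^{-d}$, and since $\dmin \ls N^{-1/d}$ forces $\dmin^{-d} \ls N \ls N^{\beta/d}\|\sigma\|_{L^\infty}^{(d-\beta)/d}\dmin^{-\beta} \cdot (\text{const})$ this is absorbed. (Here one uses $N^{-1/d} \ls \dmin \cdot \|\sigma\|_{L^\infty}^{1/d}$, which is \eqref{eq:discretization.error}.)

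For \eqref{eq:sums.Wasserstein} I split the full sum $\frac1N\sum_{j\neq i}|X_i-X_j|^{-\beta}$ into the near part $\J = \{j : |X_i - X_j| \leq 1\}$ and the far part. The near part is bounded by \eqref{eq:sums.Wasserstein.close} with $\lambda = \max\{1, \W_\infty(\sigma_N,\sigma)\}$: if $\W_\infty \leq 1$ this gives the stated $\|\sigma\|_{L^\infty}^{(d-\beta)/d}\W_\infty^{d-\beta} N^{-\beta/d}\dmin^{-\beta}$ term — wait, \eqref{eq:sums.Wasserstein.close} requires $\lambda \geq \W_\infty$, so with $\lambda = 1 \geq \W_\infty$ it gives a bound with $\lambda^{d-\beta}=1$, namely $\ls \|\sigma\|_{L^\infty}^{(d-\beta)/d} N^{-\beta/d}\dmin^{-\beta}$; this is $\ls 1 + \|\sigma\|_{L^\infty}$ using $N^{-1/d}\ls \dmin$ when $\W_\infty \le 1$, whereas if $\W_\infty > 1$ one takes $\lambda = \W_\infty$ and gets exactly the last term in \eqref{eq:sums.Wasserstein}. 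For the far part, $|X_i - X_j| > 1$: partition $\R^d$ into unit cubes; each cube contains $\ls \dmin^{-d}$ particles but also, by the $\W_\infty$-closeness, $\ls N(\|\sigma\|_{L^\infty} + \W_\infty^d)$ particles (comparing $\sigma_N$-mass of the cube to $\sigma$-mass of its $\W_\infty$-neighborhood); summing $|X_i - X_j|^{-\beta}$ over cubes at distance $\sim m$ from $X_i$ there are $\ls m^{d-1}$ such cubes, contributing $\ls m^{-\beta} m^{d-1} \cdot \#\{\text{particles per cube}\}$, and since $\beta$ could be $\leq d$... here one must be slightly careful: the sum $\sum_{m \geq 1} m^{d-1-\beta}$ diverges for $\beta \leq d$. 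This is resolved by instead bounding the far part using only the total mass: $\frac1N \#\{j : |X_i - X_j| > 1\} \leq 1$, and more sharply $\frac1N\sum_{|X_i-X_j|>1}|X_i-X_j|^{-\beta} \le \frac1N\sum_j \mathbf 1_{|X_i-X_j|>1} \le 1$, using $|X_i - X_j|^{-\beta} \le 1$ there. So the far part is simply $\ls 1$. The main obstacle is the careful bookkeeping of the threshold scale $2^{-k}\lambda \sim \W_\infty(\sigma_N,\sigma)$ in the dyadic sum — below it only the packing bound is available, above it the density bound — and checking that the leftover packing-bound contribution from small scales is genuinely absorbed by \eqref{eq:discretization.error}; everything else is routine geometric summation.
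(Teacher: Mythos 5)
The paper's proof is a one-shot argument: it smears the empirical measure into $\psi = \frac{2^d}{N\dmin^d|B_1|}\sum_{j\in\J}\mathbf 1_{B_{\dmin/2}(X_j)}$, notes $\|\psi\|_\infty \sim (N\dmin^d)^{-1}$, controls $\|\psi\|_{L^1} = \#\J/N \le \sigma(\overline{B_{2\lambda}}) \lesssim \|\sigma\|_\infty\lambda^d$ via the optimal transport map, and then invokes the Hedberg-type inequality $\||\cdot|^{-\beta}\ast\psi\|_{L^\infty} \lesssim \|\psi\|_\infty^{\beta/d}\|\psi\|_{L^1}^{(d-\beta)/d}$. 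Your dyadic decomposition is a genuinely different and in principle equivalent route, but as written it has a gap which the Hedberg inequality silently avoids.

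The gap is in the treatment of the small scales $2^{-k}\lambda \lesssim \W_\infty(\sigma_N,\sigma)$, where you fall back on the pure packing bound and then try to absorb the resulting $\W_\infty^{d-\beta}\dmin^{-d}/N$ using ``$N^{-1/d} \lesssim \dmin\,\|\sigma\|_{L^\infty}^{1/d}$.'' That inequality is \emph{not} a hypothesis of the lemma. (You attribute it to \eqref{eq:discretization.error}, but that estimate gives a lower bound on the Wasserstein distance, not on $\dmin$; a lower bound on $\dmin$ would come from \eqref{eq:density.condition}, which is not assumed here.) And the inequality is genuinely false in general: take $\sigma$ uniform on a cube, $m$ of the $N$ particles clustered in a ball of radius $\epsilon$ with $\epsilon^d \ll m/N$. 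Then $\dmin \lesssim \epsilon m^{-1/d} \ll N^{-1/d}$, and one checks that your small-scale bound $\W_\infty^{d-\beta}\dmin^{-d}/N$ exceeds the stated right-hand side $\lambda^{d-\beta}\|\sigma\|_\infty^{(d-\beta)/d}N^{-\beta/d}\dmin^{-\beta}$ by a factor $(m/(N\|\sigma\|_\infty\epsilon^d))^{(d-\beta)/d} \gg 1$, even though the lemma itself holds with equality in order of magnitude in this example. The same unstated hypothesis reappears in your proof of \eqref{eq:sums.Wasserstein}, where you split at $\lambda = \max\{1,\W_\infty\}$ and again need $\dmin \gtrsim N^{-1/d}$ to absorb the near part; the paper instead takes $\lambda = 2\W_\infty(\sigma_N,\sigma)$, which yields exactly the third term of \eqref{eq:sums.Wasserstein} with no extra hypothesis.

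The root cause is that $\W_\infty$ is not the right crossover scale. If you insist on a dyadic argument, the correct threshold between the packing bound $\#\J_k \lesssim (2^{-k}\lambda/\dmin)^d$ and the transport-plan bound $\#\J_k \lesssim N\|\sigma\|_\infty\lambda^d$ is $r_* := \dmin\lambda(N\|\sigma\|_\infty)^{1/d}$ (the point where the two counts balance). Using packing for $r_k < r_*$ and the flat count for $r_k > r_*$, both geometric sums are dominated by their boundary term at $r_*$ and give precisely the stated right-hand side, with no hypothesis on $\dmin$. This is exactly the optimization the paper's Hedberg inequality performs automatically (the proof of \eqref{eq:fractional.convolution} itself splits at the radius $\rho$ with $\rho^d = \|\psi\|_{L^1}/\|\psi\|_\infty$, which in our setting is $r_*$). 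So the decomposition idea is fine and teaches the same thing, but you must split at $r_*$, not at $\W_\infty$, and you should not invoke any extraneous relation between $\dmin$ and $N$.

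Minor points: the claim ``$\dmin \lesssim N^{-1/d}$ forces $\dmin^{-d} \lesssim N$'' is backwards (the inequality direction flips under taking $\cdot^{-d}$); and for the far part of \eqref{eq:sums.Wasserstein} your fallback $\frac1N\#\{j: |X_i-X_j|>1\} \le 1$ is correct but you never get the $\|\sigma\|_{L^\infty}$ contribution that the paper derives by pulling the far sum back to an integral against $\sigma$ via the transport map.
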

\begin{proof}
	 We use  the estimate
	 \begin{align} \label{eq:fractional.convolution}
	 	\||\cdot|^{-\beta} \ast \psi \|_{L^\infty} \lesssim \|\psi\|_{L^\infty}^{\beta/d}\|\psi\|_{L^1}^{(d-\beta)/d}
	 \end{align}
	 with 
	 \begin{align}
	 	\psi = \frac{2^d}{N \dmin^{d}\abs{B_1(0)}} \sum_{j \in \J} \1_{B_{\dmin/2}(X_j)}. 
	 \end{align} 
	 Note that $\abs{X_i-X_j}\le 2\abs{X_i-y}$ for all $y\in B_{\dmin/2}(X_j)$ and thus
	 \begin{align}
	 	\frac 1 N \sum_{j \in \J}|X_i - X_j|^{-\alpha}\ls \||X_i-\cdot|^{-\alpha} \ast \psi \|_{L^\infty}. 
	 \end{align}
	 Let $T$ be an optimal transport plan for $\sigma_N,\sigma$, i.e.,
a map $T \in L^\infty(\R^3)$ such that
$\sigma_N = T\# \sigma$ and 
\begin{align}
	\W_\infty(\sigma_N,\sigma) = \sigma - \esssup |T(x) - x|.
\end{align}
Such a plan exists if $\W_\infty(\sigma_N,\sigma) < \infty$ (see e.g. \cite{Santambrogio15}), otherwise the statement is trivial.
	There exists $x \in \R^3$ such that $T x = X_i$ and $X_{\J}\coloneqq\set{X_j:j\in \J} \subset T\bra{\overline{B_{\lambda +\W_\infty(\sigma_N,\sigma) }(x)}}$ and thus 
	 \begin{align}
	 \|\psi\|_{L^1}=\sigma_N(X_{\J}) \leq \sigma(\overline{B_{2 \lambda}(x)})\ls \|\sigma\|_\infty \lambda^d.
	 \end{align}
	 This yields \eqref{eq:sums.Wasserstein.close}.
	 
	 \medskip
	 
	 To prove \eqref{eq:sums.Wasserstein}, we fix again $X_i$ and apply  \eqref{eq:sums.Wasserstein.close} with $\lambda = 2 \W_\infty(\sigma_N,\sigma)$.
	 We need to estimate the remaining sum over $\tilde \J := \{j \colon X_j \neq X_i, |X_j - X_i| >  2 \W_\infty(\sigma_N,\sigma) \}$. Then, for $j \in \tilde J$, we use $|X_j - X_i| \geq \frac 1 2 |y - X_i|$ for $\sigma$-almost every
	 $y$ such that $T y = X_j$. Thus, with $U = T^{-1}(\{ X_k \colon k\in \tilde J\})$,
	\begin{align}
		 \frac 1 N  \sum_{k  \in \tilde J}  \frac{1}{|X_i - X_k|^{ \beta}} =  \int_{U}  \frac{1}{|X_i -T y|^{\beta}} \sigma(y) \dd y 
		  \leq C \int_{U}  \frac{1}{|X_i - y|^{\beta}} \sigma(y) \dd y  \leq C(1+\|\sigma\|_{L^\infty}).
	\end{align}	
	This concludes the proof.
\end{proof}

For the proof of Theorem \ref{th:mean.field.general} and later on, we use the standard technique to 
express the solutions to  
transport equations by  flow maps in order to estimate the Wasserstein distance between two solutions.

More precisely, consider a transport equation
 \begin{align} \label{eq:transport}
	\partial_t \sigma+u\cdot \nabla \sigma=0, \qquad \sigma(0) = \sigma_0,
\end{align}
with $\dv u = 0$.
Then, we can write the solution
\begin{align}
	\sigma(t,x) = \sigma_0(Y(0,t,x))
\end{align}
where
\begin{align}
	\partial_t Y(t,s,x) = u(t,Y(t,s,x)), \quad Y(t,t,x) = x.
\end{align}
This is possible both for the continuous systems that we consider, where $\sigma_0 \in L^\infty$ and $u \in L^\infty(0,t_\ast;W^{1,\infty})$, and for the discrete systems until the first collision of particles. For the discrete system coupled through the Stokes equations, \eqref{eq:u_N}--\eqref{eq:def.V_i}, well-posedness until the first collision has been proved in \cite[Theorem A.1]{Hofer18MeanField}.

Consider now $(\sigma_i,u_i)$ which solve the transport equation \eqref{eq:transport} on $[0,t_\ast)$ and are given through flow maps $Y_1,Y_2$.
Assume $\sigma_i \in \mP(\R^d)$ and that $\sigma_1$ is continuous with respect to the Lebesgue measure and $\sigma_1 \in L^\infty(\R^d)$ (which is conserved in time). Then, for any time ${\red t_0 } \in [0,t_\ast)$, there exists an optimal transport plan $T$ such that
 \begin{align}
	\W_\infty(\sigma_1(t_0),\sigma_2(t_0))= \sigma_1(t_0) - \esssup_x \abs{T(x)-x}, \quad \text{ for } p=\infty,\\
	\W_p(\sigma_1(t_0),\sigma_2(t_0))= \bra{\int_{\R^d} \abs{T(x)-x}^p \sigma_1(t_0,x)\dd x}^{1/p}, \quad \text{ for } p<\infty.
\end{align}
Consider $T_t=Y_2(t,t_0,\cdot)\circ T\circ Y_1(t_0,t,\cdot )$. $T_t$ is well-defined because $T$ maps $\sigma_1(t_0)$-a.e. point into the support of $\sigma_2$. $T_t$ is a transport plan for $(\sigma_1(t),\sigma_2(t))$, i.e. $\sigma_2(t) = T_t\# \sigma_1(t)$, by the property of the flow maps.

This leads to the following well-known result which we prove here for self-containedness:
\begin{lem}\label{lem:Wass.deriv}
Let $(\sigma_i,u_i)$, $i=1,2$, $t_0$ and $T_s$ as above and consider
\begin{align}\label{eq:fdef}
	f(t)\coloneqq \sup_{t_0\le s\le t} \sigma_1(s)-\esssup |T_s(x) - x|,
\quad \text{ for } p=\infty,\\
	f(t)\coloneqq \sup_{t_0\le s\le t} \bra{\int_{\R^d} \abs{T_s(x)-x}^p \sigma_1(s,x)\dd x}^{1/p}, \quad \text{ for } p<\infty.
\end{align}

Then, for all $t_0\le t_1\le t_2<t_\ast$
 \begin{align}
	f(t_2) - f(t_1) \le \int_{t_1}^{t_2}\sigma_1(s) - \esssup_x \abs{(u_2(s)\circ T_s)(x)-u_1(s,x)} \dd s, \quad \text{ for } p=\infty,\\
	f(t_2) - f(t_1)\le \int_{t_1}^{t_2} \bra{\int_{\R^d}\abs{(u_2(s)\circ T_s)(x)-u_1(s,x)}^p\sigma_1(s,x)\dd x}^{1/p} \dd s, \quad \text{ for } p<\infty.
\end{align}
\end{lem}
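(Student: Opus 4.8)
The plan is to follow characteristics and reduce the statement to a Gronwall-type inequality along trajectories. Fix $\sigma_1(t_0)$-almost every point $a$ and set $\xi_1(s) := Y_1(s,t_0,a)$ and $\xi_2(s) := Y_2(s,t_0,T(a))$, the characteristics of $u_1$ and $u_2$ issued from $a$ and $T(a)$ at time $t_0$. The semigroup identity $Y_1(t_0,s,Y_1(s,t_0,a)) = a$ gives
\begin{align}
T_s(\xi_1(s)) = \bra{Y_2(s,t_0,\cdot)\circ T\circ Y_1(t_0,s,\cdot)}(\xi_1(s)) = Y_2(s,t_0,T(a)) = \xi_2(s),
\end{align}
so $T_s$ carries the $u_1$-characteristic through $a$ onto the $u_2$-characteristic through $T(a)$. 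Since $u_i\in L^\infty(0,t_\ast;W^{1,\infty})$ — and for the discrete system one works below the first collision time, where \cite[Theorem A.1]{Hofer18MeanField} provides Lipschitz flow maps — the curves $s\mapsto \xi_i(s)$ are Lipschitz and satisfy $\frac{\dd}{\dd s}(\xi_2-\xi_1)(s) = u_2(s,\xi_2(s)) - u_1(s,\xi_1(s)) = \bra{u_2(s)\circ T_s}(\xi_1(s)) - u_1(s,\xi_1(s))$ for a.e.\ $s$. Hence $s\mapsto\abs{\xi_2(s)-\xi_1(s)}$ is absolutely continuous with a.e.\ derivative bounded in modulus by $\abs{(u_2(s)\circ T_s)(\xi_1(s)) - u_1(s,\xi_1(s))}$, and for $t_0\le t_1\le s < t_\ast$,
\begin{align} \label{eq:char.bound}
\abs{\xi_2(s)-\xi_1(s)} \le \abs{\xi_2(t_1)-\xi_1(t_1)} + \int_{t_1}^{s} \abs{\bra{u_2(r)\circ T_r}(\xi_1(r)) - u_1(r,\xi_1(r))}\,\dd r.
\end{align}

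Next I would pass from this pointwise-in-$a$ bound to the quantity defining $f$. Since $u_1$ is divergence free, the transport equation for $\sigma_1$ is simultaneously a continuity equation, so $\sigma_1(s) = Y_1(s,t_0,\cdot)_\#\sigma_1(t_0)$ for every $s$. Applying to \eqref{eq:char.bound} the $\sigma_1(t_0)$-essential supremum in $a$ when $p=\infty$, resp.\ the $L^p(\sigma_1(t_0)\,\dd a)$-norm when $p<\infty$, and then changing variables $x = \xi_1(s) = Y_1(s,t_0,a)$, turns the left-hand side into the $\sigma_1(s)$-essential supremum, resp.\ the $L^p(\sigma_1(s)\,\dd x)$-norm, of $\abs{T_s(x)-x}$. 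On the right-hand side, for $p=\infty$ one uses that the essential supremum of a time integral is at most the time integral of the essential suprema, and for $p<\infty$ one applies Minkowski's inequality in $L^p(\sigma_1(t_0)\,\dd a)$ followed by Minkowski's integral inequality to pull the $r$-integral outside the norm; undoing the change of variables then identifies the integrand with the $\sigma_1(r)$-essential supremum of $\abs{(u_2(r)\circ T_r)(x)-u_1(r,x)}$, resp.\ its $L^p(\sigma_1(r)\,\dd x)$ analogue, while the boundary term is at most $f(t_1)$. As $s\in[t_1,t_2]$ was arbitrary and, by definition of $f$, the values at times $s\in[t_0,t_1]$ are already $\le f(t_1)$, taking the supremum over $s\le t_2$ yields $f(t_2)\le f(t_1) + \int_{t_1}^{t_2}(\cdots)\,\dd r$, which is exactly the claimed inequality in both cases.

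The estimate itself is elementary; the substance is all bookkeeping, and that is where I expect the only difficulty. One has to verify the identity $T_s\circ Y_1(s,t_0,\cdot) = Y_2(s,t_0,\cdot)\circ T$ from the definition of $T_s$ and the flow semigroup property, check that $T_s$ is $\sigma_1(s)$-a.e.\ defined and is a genuine transport plan for $(\sigma_1(s),\sigma_2(s))$ (both inherited from $T$ through the bi-Lipschitz flow maps, as already noted before the statement), and justify interchanging $\esssup$ or $\norm{\cdot}_{L^p}$ with the $r$-integral, which needs joint measurability of the integrand in $(r,a)$. The mildly delicate point is that the essential suprema and $L^p$-norms are taken against the \emph{moving} measure $\sigma_1(r)$; the measure-transport identity $\sigma_1(r) = Y_1(r,t_0,\cdot)_\#\sigma_1(t_0)$ is precisely what reconciles this, and beyond these routine verifications I expect no real obstacle.
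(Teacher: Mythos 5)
Your proof is correct and follows essentially the same route as the paper: both parametrize along the characteristics of $u_1$ and $u_2$ issued from a $\sigma_1(t_0)$-generic point $a$ and its image $T(a)$, use the flow-map identity $T_s\circ Y_1(s,t_0,\cdot)=Y_2(s,t_0,\cdot)\circ T$ to write $T_s(\xi_1(s))-\xi_1(s)$ as the initial gap plus a time integral of the velocity difference, and then take $\esssup$ or $L^p(\sigma_1)$ using the push-forward identity $\sigma_1(s)=Y_1(s,t_0,\cdot)_\#\sigma_1(t_0)$. The only cosmetic differences are that you differentiate $|\xi_2-\xi_1|$ and integrate rather than writing the integral identity directly, and that you explicitly spell out the measurability and Minkowski-interchange bookkeeping that the paper leaves implicit.
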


\begin{rem}
Note that we have $f(t_0)=\W_p(\sigma_1(t_0),\sigma_2(t_0))$ and for all $t>t_0$:
 \begin{align}\label{eq:f.larger.Wass} 
	\W_p(\sigma_1(t),\sigma_2(t))\le f(t).
\end{align}
\end{rem}

\begin{proof}
We first consider the case $p = \infty$. We need to estimate $\abs{T_t(x)-x}$ for general $x$. For the position of a generic particle in the continuous system found at $x_t\in \supp \sigma_1(t)$ at time $t$ we use the notation
\begin{align}
 x_s=Y_1(s,t,x_t).
\end{align}
We have for $t>t_0$:
\begin{equation}\label{eq:Tx.x}
\begin{aligned}
  T_t(x_t)-x_t&=Y_2(t,t_0,T(x_{t_0}))-Y_1(t,t_0,x_{t_0})\\
  &= T(x_{t_0})-x_{t_0}+\int_{t_0}^t \partial_t Y_2(s,t_0,T(x_{t_0}))-\partial_t Y_1(s,t_0,x_{t_0})\dd s\\
   &= T(x_{t_0})-x_{t_0}+\int_{t_0}^t u_2(s,T_s(x_s))-u_1(s,x_s)\dd s.
\end{aligned}
\end{equation}
Taking the $\sup$ we conclude
\begin{equation}
  f(t)\le f(t_0)+ \int_{t_0}^t \sigma_1(s)  - \esssup |u_2(s)\circ T_s - u_1(s)|\dd s.
\end{equation}
If $p<\infty$ we define $f$ analogously to \eqref{eq:fdef} with the $\infty$ distance replaced by the $p$ distance. Using \eqref{eq:Tx.x} we estimate
\begin{align}
f(t)&\le \bra{\int_{\R^3}\abs{T_t(x_t)-x_t}^p\sigma_1(t,x_t)\dd x_t}^{1/p}\\
&\le \bra{\int_{\R^3}\abs{T(x_{t_0})-x_{t_0}}^p\sigma_1(t_0,x_{t_0})\dd x_{t_0}}^{1/p} \\
& +\int_{t_0}^t \bra{\int_{\R^3}\abs{u_2(s,T_s(x_s))-u_1(s,x_s)}^p\sigma_1(s,x_s)\dd x_t}^{1/p}\dd  s.
\end{align}
Again, taking the $\sup$, we conclude.
\end{proof}

In the proof of Theorem \ref{th:mean.field.general} and in the rest of the paper, we will use the following estimate for kernels that satisfy \eqref{eq:C_alpha} for arbitrary $\alpha$ (in particular they hold for the fundamental solution $\Phi$ and its gradient $\nabla \Phi$ given in \eqref{eq:Phi} and for $\Delta \Phi$):
\begin{align}
	|K(x) - K(y)| \leq C |x-y| \left( \frac 1 {|x|^{1+\alpha}} + \frac 1 {|y|^{1+\alpha}} \right). \label{eq:Phi.Lipschitz} 
\end{align}

For the proof of Theorem \ref{th:mean.field.general} we follow the proof of \cite[Theorem 2.1]{Hauray09}. There, the proof proceeds in three steps:
	The first two steps consist in proving the differential inequalities
	\begin{align}
		\frac \dd {\dd t} f &\ls f ( 1 + f^{d-1} \dmin^{-\alpha}), \label{eq:eta.Hauray}\\
		\frac \dd {\dd t} \dmin &\gs -  \dmin ( 1 + f^{d} \dmin^{-(1+ \alpha)}) \label{eq:dmin.Hauray}.
	\end{align}
	The third step is the conclusion using  the analogous assumption to \eqref{eq:Wasserstein.T=0.generic} and standard ODE-theory.	
	 For the sake of completeness, we provide the full proof highlighting the necessary adaptations.

\begin{proof}[Proof of Theorem \ref{th:mean.field.general}] 
The proof relies on application of Lemma \ref{lem:Wass.deriv} to $\tau_N(s),\sigma_N(s)$. We recall that the dynamics of the empirical measure $\sigma_N$ is governed by the ODE system \eqref{eq:sigma_N.ODE}. Thus, the evolution is not a priori of the form of a transport \eqref{eq:transport} and Lemma \ref{lem:Wass.deriv} not directly applicable. It would not be difficult to adapt Lemma \ref{lem:Wass.deriv} to cover this case as well. Alternatively, one might bring the evolution of $\sigma_N$ into the form of \eqref{eq:transport}. To this end, one just needs to find a (divergence free) vector field $E(t,x)$ such that $E(t,X_i(t)) = E_i(t)$. Such a vector field exists, at least until the time of the first collision of particles (here collision really means $X_i(t) = X_j(t)$ for some $i \neq j$). We will never go beyond this time, and we will never evaluate $E$ anywhere outside the particle positions. In particular, the exact choice of $E$ is irrelevant.

Let now $t_0=0$ and $f$ as in Lemma \ref{lem:Wass.deriv} with $p=\infty$. Then we have:
	\begin{align}
		&f(t)-f(0)\leq   \int_0^t\sup_i E_i(s)\\
		 & + \tau_N(s)  - \esssup_x \left| \int_{\R^d} (K(T_s(x) - T_s(y)) - K(x-y)) \tau_N(s,y) \dd y +\varphi_N(T_s(x))-\varphi_N(x))\right|\dd s,
	\end{align}	
	where $T_s$ is the transport plan from Lemma \ref{lem:Wass.deriv} for $\tau_N(s),\sigma_N(s)$.	As in \cite{Hauray09}, we split the spatial integral into two parts.
	 Omitting the time variable, we denote $J_1 := \{ y : |x-y| \geq 4 f\}$.
	 Then, (for $\tau_N$-almost every  $x$) we use that for all $y \in J_1$
	 \begin{align}
	 	|K(T(x) - T(y)) - K(x-y))| \leq \frac{C f}{\min\{|T(x) - T(y)|^{1+\alpha}, |x-y|^{1+\alpha}\}} \leq \frac{C f}{|x-y|^{1+\alpha}},
	 \end{align}
	 where we used \eqref{eq:Phi.Lipschitz} and the fact that
	 \begin{align}\label{eq:triangle}
	 	|T(x) - T(y)|\ge \abs{x-y}-\abs{x-T(x)}-\abs{y-T(y)}\ge \abs{x-y}-2f\ge \frac 1 2 \abs{x-y}.
	 \end{align}
	 Hence,
	 \begin{align}
		I_1 =  \left| \int_{J_1} (K(Tx - Ty) - K(x-y)) \tau_N(t,y) \dd y \right| \leq C f(1+\|\tau_N^0\|_{L^\infty}).
	 \end{align}
	To estimate the remainder, $I_2$, the integral over the set $J_2 = \R^d \setminus J_1$, we proceed differently from \cite{Hauray09}.
	  Indeed, introducing $\J := \{i \colon X_i \neq T(x), |T(x) - X_i| \leq 6 f\}$, and observing that by a similar computation as \eqref{eq:triangle} $T(y)\in \J$ for all $y\in J_2$ we have
	\begin{align}
		I_2 =\left | \int_{J_2} (K(Tx - Ty) - K(x-y)) \tau_N(y) \dd y \right |
		& \lesssim \|\tau_N^0\|_\infty f^{d-\alpha} + \frac 1 N \sum_{i \in \J}|Tx - X_i|^{-\alpha}.
	\end{align}		 
	 Therefore, by Lemma \ref{lem:sums.Wasserstein},
	 \begin{align}
	 	I_2 
	 	\lesssim \|\tau_N^0\|_\infty f^{d-\alpha} + N^{-\alpha/d} \dmin^{-\alpha} f^{d - \alpha} \|\tau_N^0\|_{L^\infty}^{(d-\alpha)/d}.
	 \end{align} 	 
	 Thus, the inequality for $f$ becomes 
	 \begin{align}
	 	f(t)-f(0) \le C \int_0^t f \left( 1 + f^{d- (1 +\alpha)} \dmin^{-\alpha} N^{-\alpha/d}\right)  + \sup_i |E_i|\dd s,
	 \end{align}
	  where we put dependencies on $\|\tau_N^0\|_{L^\infty}$ and $\sup_N \|\varphi_N\|_{L^\infty(W^{1,\infty})}$ into the constant.
	 Note that the additional velocity field $\varphi_N$ does not affect this estimate since its influence is a linear term in $f$ on the right-hand side.
	 
	 We also observe, by the same argument, that we can estimate
	 \begin{align}
	 	|(K \ast \tau_N)(x) - (K \ast \sigma_N)(x)| \leq \frac{C}{N |\dist(x,\{X_i\}_i)|^\alpha} +  C f \left( 1 + f^{d- (1 +\alpha)} \dmin^{-\alpha} N^{-\alpha/d}\right),
	 \end{align}
	where the additional term on the right-hand side is due to the closest particle to $x$ which needs to be estimated separately. This will yield \eqref{eq:estimate.convolutions} once \eqref{eq:eta.d_min.est} is established.
	 
	Next, we use that for $i \neq j$
	\begin{align}
		&\frac 1 N \left(\sum_{k \neq i} K(X_i - X_k)  - \sum_{k \neq j} K(X_j - X_k)) \right) \\	
		&\lesssim |X_i - X_j| \frac 1 N  \sum_{k \not \in \{i,j\}}  
		\left( \frac{1}{|X_i - X_k|^{1+ \alpha}} +  \frac{1}{|X_j - X_k|^{1+ \alpha}} 	\right) + \frac{1}{N \dmin^\alpha}.
	\end{align}	 
	Thus 
	\begin{align}
		\frac \dd { \dd t} \dmin \gtrsim - \dmin \bra{1+\frac{1}{N \dmin^{1+\alpha}}
		+ \sup_{ i} \frac 1 N  \sum_{k \neq i}  \frac{1}{|X_i - X_k|^{1+ \alpha}} 
		+\sup_{i \neq j} \frac{|E_i - E_j|}{|X_i - X_j|}},
	\end{align}		
	 where the linear term in $\dmin$ comes from the function $\varphi_N$.  
	 Thus, relying again on Lemma \ref{lem:sums.Wasserstein},
	\begin{align}
		\frac \dd {\dd t} \dmin \geq - C  \dmin \left(1+ \frac{1}{N \dmin^{1+\alpha}}+ f^{d - (1+\alpha)} \dmin^{-(1+ \alpha)}N^{-(1+\alpha)/d} + \sup_{i \neq j} \frac{|E_i - E_j|}{|X_i - X_j|}\right).  
	\end{align}	
We set $T_\ast(N)$ the maximal time for which
	\begin{align}\label{eq:buckling}
	\begin{aligned}
		\eta^{d- \alpha - 1} \dmin^{-(1+\alpha)} N^{-(1+\alpha)/d} +\sup_{i \neq j} \frac{|E_i - E_j|}{|X_i - X_j|} &\leq 2C_1, \qquad \eta^{d- \alpha - 1} \dmin^{-\alpha} N^{-\alpha/d} \leq C_1,\\
		 \qquad N^{-1}\dmin^{-(1+\alpha)}&\le C_1, \qquad  \sup_i |E_i| \leq e_N .
	\end{aligned}
	\end{align}		
	
	Now let $t_\ast$ be the time until which inequalities \eqref{eq:buckling} are satisfied when $\eta$ is replaced by $f$. Then, until $t_\ast$, $f$ and $\dmin$ satisfy the following inequalities:
\begin{align}
f(t)-f(0)&\le C \int_0^t f \left( 1 + C_1\right)  + e_N\dd s,\\
		\frac \dd {\dd t} \dmin &\geq  - C \dmin \left( 1 +3C_1\right).  
	\end{align}	
A Gronwall argument gives the desired estimates until $t_\ast$, where we used that $\eta\le f$. If $t_\ast<T_\ast(N)$ we can start with an $f$ corresponding to an optimal transportation at $t_0=t_\ast$ in Lemma \eqref{lem:Wass.deriv} to expand the validity of estimate \eqref{eq:eta.d_min.est} over $t_\ast$. By repeating this argument, the interval in which the estimate holds is open and closed in $[0,T_\ast(N)]$ and thus \eqref{eq:eta.d_min.est} holds on $[0,T_\ast(N)]$. 
	It remains to prove $T_\ast(N) \to \infty$ as $N \to \infty$. We start with the observation that $\eta(0)\gs  N^{-1/d}$ (see \eqref{eq:discretization.error}) implies by \eqref{eq:Wasserstein.T=0.generic} that $N^{-1}\dmin(0)^{-{1+\alpha}}\to 0$ as $N\to \infty$.  For any $t>0$ we apply \eqref{eq:E_i.bounds} with $\lambda=2e^{C_2 t}$ to get an $N_0$ such that we have the right error bounds for $N\ge N_0$. We then have the following for $s<\min(t,T_\ast(N))$:
	\begin{align}
		\eta^{d- \alpha - 1} \dmin^{-(1+\alpha)} N^{-(1+\alpha)/d} +\sup_{i \neq j} \frac{|E_i - E_j|}{|X_i - X_j|} \leq \frac{(\eta(0) +  e_N)^{d-(1 + \alpha)}}{\dmin(0)^{1+\alpha}N^{(1+\alpha)/d}}e^{dC_2s}+C_1, \\
		\eta^{d- \alpha - 1} \dmin^{-\alpha} N^{-\alpha/d} \leq \frac{(\eta(0) +  e_N)^{d-(1 + \alpha)}}{\dmin(0)^{1+\alpha}N^{(1+\alpha)/d}}\dmin(0) N^{1/d}e^{(d-1)C_2s},\\
		N^{-1}\dmin^{-(1+\alpha)}\le N^{-1}\dmin(0)^{-(1+\alpha)} e^{(1+\alpha)C_2t},\qquad  \sup_i |E_i| \leq e_N .
	\end{align}
	Because of \eqref{eq:Wasserstein.T=0.generic} we can choose $N$ so large that the right hand sides are smaller than $2C_1$ and $C_1$, respectively, for $s=t$, which implies $T_\ast(N)\ge t$. Since $t$ was arbitrary this finishes the proof.
%
%
\end{proof}
\section{Proof of the main results}
\label{sec:proof.main}

We recall that from now on the dimension is again fixed to $d=3$. The following lemma is used to estimate some recurring  sums. We refer to \cite[Lemma 2.1]{JabinOtto04} and \cite[Lemma 4.8]{NiethammerSchubert19} for the proof.

\begin{lem} \label{le:sums}
For $k = 1,2$,
\begin{align} \label{eq:sum.1.2}
	\alpha_k := \sup_i  \frac 1 N\sum_{i \neq j} \frac 1 {|X_i - X_j|^k} \lesssim \frac{1}{N^{k/3}\dmin^{k}}.
\end{align}
Moreover, 
\begin{align} \label{eq:sum.3}
	\alpha_3 := \sup_i  \frac 1 N\sum_{i \neq j} \frac 1 {|X_i - X_j|^3} \lesssim \frac{\log N}{N\dmin^{3}} .
\end{align}
\end{lem}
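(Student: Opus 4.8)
The plan is to run a dyadic shell decomposition around each fixed particle $X_i$, controlling the number of particles in each shell by the packing estimate that comes from the separation $\dmin$. First I would record the elementary counting bound: since the balls $B_{\dmin/2}(X_j)$, $j=1,\dots,N$, are pairwise disjoint and, for those $j$ with $|X_i-X_j|< r$, contained in $B_{r+\dmin/2}(X_i)$, comparing volumes gives that the number of such $j$ is $\lesssim (1+r/\dmin)^3$; of course it is also trivially $\leq N$. Hence, writing $\mathcal{A}_m := \{ j \neq i : \dmin\, 2^m \leq |X_i-X_j| < \dmin\, 2^{m+1}\}$ for $m \geq 0$ (every $j\neq i$ lies in some $\mathcal{A}_m$ because $|X_i-X_j|\geq \dmin$), we have $\#\mathcal{A}_m \lesssim \min\{2^{3m},N\}$.

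Then I would estimate, uniformly in $i$,
\begin{align}
\sum_{j\neq i} \frac{1}{|X_i-X_j|^k} \;\lesssim\; \sum_{m\geq 0} \frac{\#\mathcal{A}_m}{(\dmin\, 2^m)^k} \;\lesssim\; \dmin^{-k}\sum_{m\geq 0} \min\{2^{3m},N\}\, 2^{-km},
\end{align}
and split the series at $m_0 := \lceil \tfrac13 \log_2 N\rceil$, so that $2^{3m_0}\sim N$. For $m\leq m_0$ the summand is $\dmin^{-k} 2^{(3-k)m}$, and for $m> m_0$ it is $N\dmin^{-k} 2^{-km}$. When $k\in\{1,2\}$, the first (increasing geometric) sum is controlled by its largest term $\dmin^{-k} 2^{(3-k)m_0}\sim \dmin^{-k}N^{(3-k)/3}$, while the tail (decreasing geometric) is controlled by its first term $N\dmin^{-k}2^{-km_0}\sim \dmin^{-k}N^{(3-k)/3}$; so both pieces are $\lesssim \dmin^{-k}N^{1-k/3}$, and dividing by $N$ yields \eqref{eq:sum.1.2}. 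When $k=3$ the exponent $3-k$ vanishes, so the first sum consists of $O(\log N)$ terms each of size $\dmin^{-3}$, contributing $\dmin^{-3}\log N$, and the tail is $N\dmin^{-3}2^{-3m_0}\sim \dmin^{-3}$, which is absorbed; dividing by $N$ yields \eqref{eq:sum.3}. Taking the supremum over $i$ finishes the argument.

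I do not expect any genuine obstacle: the whole proof is elementary, and the statement is in fact quoted verbatim from \cite[Lemma 2.1]{JabinOtto04} and \cite[Lemma 4.8]{NiethammerSchubert19}. The only two points deserving a touch of care are making the packing constant manifestly independent of $N$ and $\dmin$, and, in the critical case $k=3$, recognizing that it is precisely the borderline scaling $3-k=0$ that converts the expected power of $N$ into the logarithmic factor $\log N$.
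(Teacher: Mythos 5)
Your proof is correct, and it is essentially the standard dyadic-shell/packing argument: the paper itself gives no proof but defers to \cite[Lemma 2.1]{JabinOtto04} and \cite[Lemma 4.8]{NiethammerSchubert19}, which proceed by the same counting of particles in annuli around $X_i$ via disjointness of the balls $B_{\dmin/2}(X_j)$. The bookkeeping at the split $2^{3m_0}\sim N$, including the borderline case $k=3$ producing the $\log N$, is handled correctly.
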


\subsection{Proof of Theorem \ref{th:tau}}


%

\begin{proof}[Proof of Theorem \ref{th:tau}]
Let $\bar \tau_N$ be the solution to \eqref{eq:v.tau} with initial data $\bar \rho_N^0$ from \eqref{eq:rho.bar}. Then, we claim for all $T_\ast>0$ and all $N=N(T_\ast)$ sufficiently large, for all $t \leq T_\ast$
\begin{align} \label{eq:rho_N.tau.bar}
  \W_\infty (\rho_N(t),\bar \tau_N(t) )\leq C( \phi_N + \W_\infty(\rho_N(0), \bar \rho_N^0)) e^{C t}.
\end{align}
We first show how  the estimate for $\W_p (\rho_N(t),\tau(t) ) $ in \eqref{eq:rho_N.tau} follows from \eqref{eq:rho_N.tau.bar}:
We estimate
\begin{align}\label{eq:rhoN.tau}
	  \W_p (\rho_N(t),\tau(t) ) 
	  \leq \W_\infty (\rho_N(t),\bar \tau_N(t) ) 
	  + \W_p (\bar \tau_N(t),\tau(t) ),
\end{align}
where we used that for probability densities $\sigma_1,\sigma_2$ and $q\le r$ we have:
\begin{align}\label{eq:W.hirarchy}
  \W_q(\sigma_1,\sigma_2)\le \W_r(\sigma_1,\sigma_2)\norm{\sigma_1}_{L^1}^{(r-q)/r}=\W_r(\sigma_1,\sigma_2).
\end{align}
Moreover, by the stability result from Theorem \ref{th:hoefer},
\begin{align}\label{eq:tau_bar.tau}
	\W_p (\bar \tau_N(t),\tau(t)) \lesssim e^{Ct}  \W_p (\bar \rho_N^0,\rho_0) \leq e^{Ct}  \W_\infty (\bar \rho_N^0,\rho_N(0)) + e^{Ct}  \W_p ( \rho_N(0),\rho_0).
\end{align}
By definition of $\bar \rho_N^0$, we have
\begin{align} \label{eq:rho.bar.dmin}
	\W_\infty (\bar \rho_N^0,\rho_N(0)) \approx \dmin(0) \approx N^{-1/3},
\end{align}
where $\approx$ is used to indicate that both $\ls$ and $\gs$ hold. In view of \eqref{eq:discretization.error}, this gives
\begin{align} \label{eq:W_p.intermediate}
	 \W_\infty (\bar \rho_N^0,\rho_0) \lesssim   \W_p ( \rho_N(0),\rho_0).
\end{align}
Inserting \eqref{eq:W_p.intermediate} into \eqref{eq:rho_N.tau.bar} and \eqref{eq:tau_bar.tau} and the results into \eqref{eq:rhoN.tau} yields
 \eqref{eq:rho_N.tau}.

In order to prove the claim \eqref{eq:rho_N.tau.bar}
as well as \eqref{eq:min.dist.conserved}, it suffices to show that the assumptions of Theorem \ref{th:mean.field.general} are satisfied with $K = \Phi g$, 	$\varphi_N = \frac{g}{6 \pi N R}$ and	$e_N = C (\phi_N + R)$ where $\eta(t)=\W_\infty(\rho_N(t),\bar \tau_N(t))$.  Note that \eqref{eq:discretization.error} implies that 
$ R \ll \dmin(0) \lesssim C N^{-1/3} \lesssim C \W_\infty (\rho_0,\rho_N(0))$, and therefore the error $R$ does not appear in \eqref{eq:rho_N.tau}.

	Condition \eqref{eq:Wasserstein.T=0.generic} follows directly from assumptions \eqref{eq:density.condition}, \eqref{eq:ass.phi.logN} and \eqref{eq:rho.bar.dmin}.

	To prove \eqref{eq:E_i.bounds}  we rely on the results from \cite{Hofer18MeanField}.
	In \cite{Hofer18MeanField} rotations are neglected. However,
	the  results are easily adapted to include the rotations.
	We refer to \cite{NiethammerSchubert19} and \cite{Hoefer19} for 
	the details about  the necessary modifications of the method of reflections 	to include particle rotations.

	In \cite{Hofer18MeanField}, the first order approximation of the method of reflections,
	that we denote here by $v_N^{(0)}$, is denoted by $u$. Thus, \cite[Proposition 3.12]{Hofer18MeanField} yields that as long as
	$\phi_N \alpha_3 < \delta$ for some given $\delta >0$
	\begin{align} \label{eq:u_N.v^0}
		\|u_N - v_N^{(0)}\|_{L^\infty}  \lesssim \alpha_2 (\alpha_2 \phi_N + R) \lesssim (\alpha_2^2+1) (R+ \phi_N).
	\end{align}
	We resort to the explicit form of $v_N^{(0)}$ in \eqref{eq:def.v^k} to find 
	\begin{align} 
			\dot X_i &=	\frac 1 N \sum_{j \neq i} \Phi(X_i - X_j) g + \frac{g}{6 \pi N R} + E_i,  \\  E_i &:= \dot X_i - v_N^{(0)}(X_i) + \sum_{j \neq i} \left(w_N(X_i - X_j) - \frac 1 N \Phi(X_i - X_j) g\right),
\end{align}
where we used $w_N(0) = \frac{g}{6 \pi N R}$. 
The explicit form of $w_N$ from \eqref{eq:w_n.explicit} yields
\begin{align} \
	E_i &= u_N(X_i) - v_N^{(0)}(X_i) - \frac{R^2}{6N}\sum_{j \neq i} \Delta\Phi(X_i - X_j) g.
\end{align}
We have
\begin{align}\label{eq:DeltaPhi}
	\frac {R^2} N \sum_{j \neq i} |\Delta \Phi(X_i - X_j)| &\lesssim {R} \frac 1 N \sum_{j \neq i} |X_i - X_j|^{-2} = \alpha_2 R.
\end{align}
Thus,  as long as
	$\phi_N \alpha_3 < \delta$
\begin{align} \label{eq:est.E_i}
	|E_i| \lesssim (\alpha_2^2+1) (R+ \phi_N).
\end{align}
	
	Moreover, using $\Phi(X_i - X_j) = \Phi(X_j - X_i)$, we have
	\begin{align} 
		|E_i - E_j| &\leq |\dot X_i - \dot X_j| + \frac{1}{N}\left|\sum_{k \not \in \{j,i\}} \Phi(X_i - X_k)g - \Phi(X_j - X_k) g\right| \\
		& \lesssim |u_N(X_i) - u_N(X_j)| + \alpha_2 |X_i  - X_j|
	\end{align}
	by \eqref{eq:Phi.Lipschitz}.
	By \cite[Lemma 3.16]{Hofer18MeanField}, we have 
	\begin{align}
	|\dot X_i - \dot X_j|\ls \alpha_2\abs{X_i-X_j},
	\end{align}	
	 as long as $\phi_N \alpha_3 < \delta$ yielding
	\begin{align}\label{eq:est.E_i-E_j}
		|E_i - E_j| \lesssim \alpha_2 |X_i  - X_j|.
	\end{align}	
	Note that the important object to estimate in order to control the particle distance is $|\dot X_i - \dot X_j|$, which can be handled directly by \cite[Lemma 3.16]{Hofer18MeanField}. However, the separate estimate for $|E_i - E_j|$ is needed in order to apply Theorem \ref{th:mean.field.general}.
	
	To conclude the proof of the claim, we first observe that by Lemma \ref{le:sums}, and assumptions \eqref{eq:density.condition} and \eqref{eq:ass.phi.logN} as long as $\frac{\dmin(0)}{\dmin(t)} \le \lambda$:
	\begin{align}
		\alpha_3 \phi_N \lesssim \frac{\log N}{N \dmin(t)^3}\phi_N\ls \left( \frac{\dmin(0)}{\dmin(t)} \right)^3 \log N \phi_N \le \lambda^3 \log N \phi_N \to 0, \text { for }N\to \infty.
	\end{align}
	
	Thus, as long as $\frac{\dmin(0)}{\dmin(t)} \le \lambda$ and for $N$ sufficiently large, \eqref{eq:est.E_i} and \eqref{eq:est.E_i-E_j} hold and if additionally $\frac{\eta(t)}{\eta(0) + e_N}\le \lambda$, then by \eqref{eq:sums.Wasserstein}
	\begin{align}
		\alpha_2 \lesssim 1 + \frac{\eta(t)}{N^{2/3} \dmin(t)^{2}} \lesssim 1 + (\eta(0) + e_N) \frac{\eta(t)}{\eta(0) + e_N}  \left(\frac{\dmin(0)}{\dmin(t)}\right)^2\ls 1 + (\eta(0) + e_N) \lambda^3.
	\end{align}
	Since $\eta(0)+e_N\to 0$ as $N\to \infty$ by assumptions \eqref{eq:density.condition} and \eqref{eq:ass.phi.logN} as well as by \eqref{eq:rho.bar.dmin}, this shows that \eqref{eq:E_i.bounds}  is satisfied.
	
	Application of Theorem \ref{th:mean.field.general} yields the claim as well as 
	\eqref{eq:min.dist.conserved}.

	 It remains prove \eqref{eq:tau.v-u_N}. 
	 By the stability result from Theorem \ref{th:hoefer} and $\W_p(\bar \rho_N^0,\rho_0)\ls \W_p(\rho_N^0,\rho_0) $ (by \eqref{eq:W_p.intermediate}), it suffices to estimate 
	 $\|\bar v_N - u_N\|_{L^q_\loc}$ where $\bar v_N$ is the fluid velocity in the solution to \eqref{eq:v.tau} with initial data $\bar \rho_N^0$. 
	 Then, $\bar v_N =  \Phi g \ast \bar \rho_N$.
	 Thus, 
	 \begin{align}
	 	|\bar v_N - u_N| \leq |v_N^{(0)} - u_N| + | v_N^{(0)} - \Phi g \ast \rho_N| + |\Phi g \ast (\bar \rho_N - \rho_N)|.
	 \end{align}
	 The first term on the right-hand side, we control by \eqref{eq:u_N.v^0},
	 and the third term by \eqref{eq:estimate.convolutions}. The second term is estimated similarly as in \eqref{eq:DeltaPhi} by
	 \begin{align} \label{v^0.convolution}
	 	| v^{(0)} - \Phi g \ast \rho_N|(x) \lesssim \frac 1 {N |\dist(x,\{X_i\}_i)|} + R \alpha_2.
	 \end{align}
	 Thus, 
	 	 \begin{align}\label{eq:vN.uN}
	 	|\bar v_N - u_N|(x) & \lesssim \frac 1 {N |\dist(x,\{X_i\}_i)|} + (\alpha_2^2+1+Ce^{Ct}) (R+ \phi_N) + \W_\infty(\rho_N(0),\bar \rho_N^0) e^{Ct}. \qquad
	 \end{align}
	 Combining Lemma \ref{le:sums}, \eqref{eq:eta.d_min.est} and \eqref{eq:density.condition} we have $\alpha_2 \lesssim e^{Ct}$. By \eqref{eq:separation.condition}, \eqref{eq:density.condition}, \eqref{eq:discretization.error} and \eqref{eq:W_p.intermediate}, we have $R + N^{-1/3} + \W_\infty(\rho_N(0),\bar \rho_N^0) \lesssim  \W_p(\rho_N(0), \rho_0)$. Finally, 
	 \begin{align} \label{eq:int.dist.X_i}
	 	\||\dist(x,\{X_i\}_i)|^{-1}\|_{L^q_\loc} \lesssim \dmin^{-1}
	 \end{align}
	 for all $q < 3$ . Inserting the last three estimates into \eqref{eq:vN.uN} and using again \eqref{eq:discretization.error} yields
	 \begin{align}
	 	\|\bar v_N - u_N\|_{L^q_\loc} \lesssim \frac 1 N \dmin^{-1}+(\phi_N + \W_p(\rho_N(0), \rho_0)) e^{Ct} \lesssim  (\phi_N + \W_p(\rho_N(0), \rho_0)) e^{Ct}.
	 \end{align}
	This finishes the proof.
\end{proof}
\subsection{Proof Theorem \ref{th:main}}\label{sec:main_proof}

\begin{proof}[Proof of Theorem \ref{th:main}]
We follow the same argument as for the proof of Theorem \ref{th:tau}.
Let $\bar \rho_N$ be the solution to \eqref{eq:u.rho} with initial data $\bar \rho_N^0$ from \eqref{eq:rho.bar}. 
Then, we claim for all $T_\ast>0$ and all $N=N(T_\ast)$ sufficiently large, for all $t \leq T_\ast$
	\begin{align}
		&\W_\infty(\rho_N(t),\bar \rho_N(t)) \\
		&\leq\left( \W_\infty(\rho_N(0), \bar \rho_N^0) + \phi_N^2 |\log \phi_N| + \phi_N \W_\infty(\rho_N(0),\bar \rho_N^0) |\log \W_\infty(\rho_N(0),\bar \rho_N^0)|\right) e^{C t}.
	\end{align}
Using the stability result from Theorem \ref{th:intermediate} yields analogously as in the proof of Theorem~\ref{th:tau}
	\begin{align}
			\W_p (\rho_N(t),\rho(t)) \lesssim    \W_\infty (\bar \rho_N(t),\rho_N(t)) + e^{Ct}  \left(\W_p (\rho_N^0,\rho_0) + \W_\infty (\bar \rho_N(0),\rho_N(0))\right).
	\end{align}
	Combining this estimate with the claim above and equation \eqref{eq:W_p.intermediate} and Proposition \ref{pr:rho.rho_eff} yields \eqref{eq:rho_N.rho_eff}.

It remains to prove the claim.
Again, we show that the assumptions of Theorem \ref{th:mean.field.general} are satisfied, this time with 
\begin{align}
	K &= \Phi g , \\
	\varphi_N(x) &= \frac{g}{6 \pi N R}-5 \phi_N  (e \Phi \ast( \tau (e \Phi g \ast \tau)))(x), \\
	e_N(t) &= C (\phi_N^2 |\log \phi_N| + N^{-1/3}) e^{C t}.
\end{align}
Again, by \eqref{eq:discretization.error} and assumption \eqref{eq:density.condition}, we can absorb the  error $N^{-1/3}$ into 
$\W_\infty (\bar \rho_N(0),\rho_N(0)$ to obtain the assertion.

We observe that the regularity of $\tau$ provided by Theorem \ref{th:hoefer} implies that the functions $\varphi_N$ satisfy the assumptions of Theorem \ref{th:mean.field.general}. 

Condition \eqref{eq:Wasserstein.T=0.generic} again follows directly from assumptions \eqref{eq:density.condition}, \eqref{eq:ass.phi.logN} and \eqref{eq:rho.bar.dmin}.

It remains to verify \eqref{eq:E_i.bounds}.
In fact, by Theorem \ref{th:tau}, we already know that  for all $T_\ast > 0$ and for all $N = N(T_\ast)$ sufficiently large
\begin{align} \label{eq:est.dmin}
	\dmin(t) \geq \dmin(0) e^{-C t} \quad \text{for all } t \leq T_\ast.
\end{align}
Therefore, following the proof of Theorem \ref{th:mean.field.general}, we observe that instead of \eqref{eq:E_i.bounds}, it suffices to show that
\begin{align}
		\forall T_\ast > 0 \, \exists N_0> 0 \, \forall N > N_0 \, \forall t \leq T_\ast \quad \sup_i E_i(t) &\leq e_N(t). 
	\end{align}
We fix $T_\ast$ and $t \leq T_\ast$ and assume that $N$ is taken sufficiently large such that \eqref{eq:est.dmin} holds.

In this case the error is given by the $\bar{E}_i$ from \eqref{eq:V_i.strategy2}:
\begin{align} 
	\bar E_i=V_i- \frac{g}{6 \pi N R}  - \sum_{j \neq i} \Phi(X_i - X_j)g + 5 \phi_N (e \Phi \ast (\tau (e \Phi g \ast \tau)))(X_i).
\end{align}
To control it, we apply the results on the method of reflections from \cite{Hoefer19} that we stated in Section \ref{sec:MoR}.  Since by assumption \eqref{eq:density.condition} and \eqref{eq:est.dmin}, 
\begin{align} \label{eq:phi_0}
	c_0(t) = \frac {R^3}{\dmin(t)^3} \lesssim \phi_N e^{C t } \to 0 \quad \text{as } N \to \infty, 
\end{align}  
Theorem \ref{th:reflections} implies for $v_N^{(1)}$ from \eqref{eq:def.v^k}
\begin{align} \label{eq:Cor.2.7}
	\|u_N - v_N^{(1)}\|_{L^\infty} \lesssim (R^{\alpha} + \phi_N^{1/p'}) \phi_N \|e v^{(0)}\|_{L^p(\cup_i B_i)},
\end{align}
where $p> 3$ and $\alpha = 1 - 3/p$ and we used Lemma \ref{le:sums} and \eqref{eq:density.condition} to estimate $\lambda_p\ls \phi_N$ in \eqref{eq:lambda_q}.
We can estimate the right-hand side of \eqref{eq:Cor.2.7} using $v^{(0)} = \sum_i w_N(\cdot - X_i)$ and the explicit form of $w_N$ from \eqref{eq:w_n.explicit}:
\begin{align}
	\|e v^{(0)}\|^p_{L^p(\cup_i B_i)} \lesssim R^3 \sum_{i} \left( \sum_{j \neq i} \frac{1}{N |X_i-X_j|^2} \right)^p \lesssim N R^3 \left(\frac{1}{N^{2/3} {\dmin(t)^2} } \right)^p   \lesssim \phi_N e^{C t},
\end{align}
where we used Lemma \ref{le:sums} and \eqref{eq:density.condition}.
Thus, 
\begin{align} 
	\|u_N - v_N^{(1)}\|_{L^\infty} \lesssim e^{Ct} (R^{\alpha} \phi_N^{-1/p'} + 1 ) \phi_N^2.
\end{align}
We have 
\begin{align} 
	R^{\alpha} \phi_N^{-1/p'} \lesssim R^{1-3/p}(NR^3)^{1/p-1}=R^{-2}N^{1/p-1}\ls \dmin^{-2}N^{1/p-1}\ls N^{1/p-1/3}.
\end{align}
Since $p > 3$,
\begin{align} \label{eq:u_N.v^1}
	\|u_N - v_N^{(1)}\|_{L^\infty} \lesssim  e^{Ct} \phi_N^2.
\end{align}

Thus (since $V_i=u_N(X_i)$), in order to prove the claim, it suffices to prove that for all $i$
\begin{align} \label{eq:V_i.V_i.app}
	|V_{i,\app} - v_N^{(1)}(X_i)| \lesssim \left( N^{-1/3} + \phi_N^2 |\log \phi_N| + \phi_N \W_\infty(\rho_N(0),\rho_0) (1+ |\log \W_\infty(\rho_N(0),\rho_0)|)\right) e^{C t},
\end{align}
where
\begin{align}
	V_{i,\app} := \frac{g}{6 \pi N R}  + \sum_{j \neq i} \Phi(X_i - X_j)g - 5 \phi_N (e \Phi \ast (\tau (e \Phi g \ast \tau)))(X_i).
\end{align}

We use {\cite[Lemma 3.1]{Hoefer19}}, implying that for $x \in B_i$
\begin{align}
	Q_i v_N^{(0)} (x) = v_N^{(0)}(x) - \fint_{\partial B_i} v_N^{(0)} \dd y - \frac 1 2 \fint_{B_i} \curl v_N^{(0)} \dd y  \times (x - X_i).
\end{align}
In particular, for all $x \in B_i$
\begin{align} \label{eq:Q_i.B_i}
	|Q_i v_N^{(0)} (x)| \leq  R \|\nabla v_N^{(0)}\|_{L^\infty(B_i)} \lesssim R \frac 1 N \sum_{j \neq i} |X_i - X_j|^2 \lesssim  R e^{Ct},
\end{align}
where we used Lemma \ref{le:sums} as well as \eqref{eq:density.condition}.

Thus, by the definition of $v^{(1)}$ and the explicit form of $w_N$, we have
\begin{align}
	v_N^{(1)}(X_i) =& \frac{g}{6 \pi N R} + \sum_{j \neq i}  w_N(X_i - X_j)  - Q_i v_N^{(0)} (X_i) -  \sum_{j \neq i} \sum_{k \neq j} (Q_j w_N(\cdot - X_k))(X_i) \\
	=& V_{i,\app} + \sum_{j \neq i} \left( w_N(X_i - X_j)  - \frac 1 N \Phi(X_i - X_j) g\right) - Q_i v_N^{(0)} (X_i) \\
	&- \sum_{j \neq i} \sum_{k \neq j} (Q_j w_N(\cdot - X_k))(X_i) + 5 \phi_N (e \Phi \ast( \tau (e \Phi g \ast \tau)))(X_i) \\
	=:& V_{i,\app} + E_{i,1} - E_{i,2}.
\end{align}
It remains to estimate $E_{i,1}$ and $E_{i,2}$.
As regards $E_{i,1}$, we use
\eqref{eq:Q_i.B_i}, \eqref{eq:w_n.explicit} and Lemma \ref{le:sums} as well as \eqref{eq:density.condition} to obtain
\begin{align} \label{eq:est.E_i,1}
	|E_{i,1}| &\lesssim 
	R e^{Ct} + \frac {R^2} N \sum_{j \neq i} |X_i - X_j|^{-3} \lesssim e^{Ct} (R+ R \alpha_2)  \lesssim e^{Ct} R \lesssim  e^{Ct} N^{-1/3}.
\end{align}

The second error term $E_{i,2}$, we further split into
\begin{align}
	E_{i,2} =& \sum_{j \neq i} \sum_{k \neq j} \left( (Q_j w_N(\cdot - X_k))(X_i) - \frac{5 \phi_N}{N^2} e \Phi(X_i - X_j) (e \Phi(X_j - X_k) g) \right) \\
	&+ \sum_{j \neq i} \sum_{k \neq j} \frac {5 \phi_N} {N^2} e \Phi(X_i - X_j)
	 (e \Phi(X_j - X_k) g - 5 \phi_N (e \Phi \ast( \tau (e \Phi g \ast \tau)))(X_i) \\
	=:& E_{i,2,1} + E_{i,2,2}.
\end{align}
We first estimate $E_{i,2,1}$. By Lemma \ref{lem:dipole},
\begin{equation} \label{eq:Q_j.D}
\begin{aligned} 
	& \left|(Q_j w_N(\cdot - X_k))(X_i) - \frac{5 \phi_N}{N} e \Phi (X_i - X_j) \fint_{B_j} e w_N(x - X_k) g \dd x\right| \\
	&\lesssim \frac{R^{5/2}}{|X_i - X_j|^3} \|e w_N(\cdot - X_k)\|_{L^2(B_j)}  \lesssim \frac{R^4}{N} \frac{1}{|X_i -X_j|^3} \frac 1 {|X_j - X_k|^2}.
\end{aligned}
\end{equation}
where the last inequality above follows directly from  \eqref{eq:w_n.explicit}.

With \eqref{eq:Q_j.D} and 
\begin{align}
	\fint_{B_j} |\frac 1 N e \Phi(X_j - X_k) g  -  e w_N (x - X_k) | \dd x \leq \frac{R}{N} |X_j - X_k|^{-3},
\end{align}
we find
\begin{align}
	|E_{i,2,1}| &\lesssim \sum_{j \neq i} \sum_{k \neq j} \left(\frac{R^4}{N} \frac{1}{|X_i -X_j|^3} \frac 1 {|X_j - X_k|^2} +
	\frac{R^4}{N} \frac{1}{|X_i -X_j|^2} \frac 1 {|X_j - X_k|^3} \right) \\
	& \lesssim e^{Ct} R \phi_N \log{N} \lesssim e^{Ct} R \lesssim  e^{Ct} N^{-1/3}, 
\end{align}
where we used Lemma \ref{le:sums} and \eqref{eq:ass.phi.logN} in the last line.

It remains to estimate $E_{i,2,2}$ 
We use the existence of an optimal transport plan $T$ such that $\rho_N = T \# \tau$ and
\begin{align}
	\W_\infty(\rho_N,\tau) = \tau - \esssup |T (x) - x|.
\end{align}   
With the convention $\Phi(0) =0$, $e \Phi (0) = 0$, this leads to 
\begin{align*}
	E_{i,2,2} =& 5 \phi_N \iint e \Phi(X_i - x) e \Phi(x-y) g \dd \rho_N(x) \dd \rho_N(y) \\
	& - 5 \phi_N \iint e \Phi(X_i - x) e \Phi(x-y) g  \tau(x)  \tau(y) \dd x  \dd y \\
	=& 5 \phi_N \iint \Bigl(e \Phi(X_i - T (x)) e \Phi(T(x)-T(y)) g  - e \Phi(X_i - x) e \Phi(x-y) g  \Bigr) \tau(x) \tau(y) \dd x   \dd y \\
	=& 5 \phi_N \int  \left(e \Phi(X_i - T (x))  - e \Phi(X_i -x) \right) \int e \Phi(T(x)-T(y)) g  \tau(y) \dd y   \tau(x) \dd x \\
	&+ 5 \phi_N \int e \Phi(X_i - x) \int \left(e \Phi(T(x)-T(y)) g - e \Phi(x-y) g  \right)  \tau(y)  \dd y \tau(x) \dd x   \\
	=:& 5 \phi_N(I_1 + I_2).
\end{align*}
Regarding $I_1$, we first bound the inner integral uniformly in $x$  using that, for $\tau$-a.e. $x$, $T x = X_j$  for some $1 \leq j \leq N$. Thus,
\begin{align}
	\left|\int e \Phi(T(x)-T(y)) g  \tau(y) \dd y \right| &=  \left| \frac 1 N \sum_{k \neq j}  e \Phi(X_j - X_k) g  \right| 
	\lesssim \frac 1 N \sum_{k \neq j} \frac{1}{|X_k - X_j|^2} \lesssim e^{Ct}
\end{align}
due to Lemma \ref{le:sums}.
Thus, using Lemma \ref{lem:sums.Wasserstein} similarly as in the proof of Theorem \ref{th:mean.field.general},
\begin{align}
	|I_1| \lesssim & \int_{\R^3 \setminus B_{2 \W_\infty(\rho_N,\tau)}(X_i)} |T (x) - x| \left(
	\frac 1 {|X_i  -T(x)|^{3}} + \frac 1 {|X_i   - x|^3} \right) \tau(x) \dd x  \\
	& + 
	\int_{B_{2 \W_\infty(\rho_N,\tau)}(X_i)} \left(\frac 1 {|X_i  -T(x)|^2} + \frac 1 {|X_i   - x|^2} \right) \tau(x)   \dd x \\
	 \lesssim &e^{Ct}\W_\infty(\rho_N,\tau) \int_{\R^3 \setminus B_{2 \W_\infty(\rho_N,\tau)}(X_i)}  \frac 1 {|X_i   - x|^3}  \tau(x)  \dd x  
	+ e^{Ct}\W_\infty(\rho_N,\tau) \\
	\lesssim & e^{Ct} \W_\infty(\rho_N,\tau)(1+|\log \W_\infty(\rho_N,\tau)|).
\end{align}
Regarding $I_2$, we obtain in a very similar way
\begin{align}
	|I_2| \lesssim  e^{Ct}\W_\infty(\rho_N,\tau)(1+|\log \W_\infty(\rho_N,\tau)|).
\end{align}
Indeed, since $\|e \Phi \ast h\| \ls \| h\|_{L^1 \cap L^\infty}$, it suffices to get an $L^\infty$-bound  on the inner integral.
This is obtained by splitting the integral just as above.

To conclude the proof of the claim, we apply the result of Theorem \ref{th:tau} to bound $\W_\infty(\rho_N,\tau)$.
This yields, using monotonicity of $z(1+ |\log z|)$ and $\phi_N \ll 1$,
\begin{align}
	|E_{i,2,2}| \lesssim e^{Ct} \left(\phi_N^2 |\log \phi_N| + \phi_N \W_\infty(\rho_N(0),\rho_0) (1+ | \log \W_\infty(\rho_N(0),\rho_0)|) \right).
\end{align}
Combining these error estimates yields the desired estimate \eqref{eq:V_i.V_i.app}

\medskip 

	 It remains to prove \eqref{eq:main.u_eff-u_N}.
	 Again, by the stability result from Theorem \ref{th:intermediate} and Proposition \ref{pr:rho.rho_eff}, it suffices to estimate 
	 $\|\bar u_N - u_N\|_{L^q_\loc}$ where $\bar u_N$ is the fluid velocity in the solution to \eqref{eq:u.rho} with initial data $\bar \rho_N^0$. 
	 Then, 
	 \begin{align}
	 	\bar u_N = \Phi g \ast \bar \rho_N - 5 \phi_N e \Phi \ast (\tau (e \Phi g \ast \tau)).
	 \end{align} 
	 Thus,
	 \begin{align} \label{eq:bar.u_n.u_N}
	 	|\bar u_N - u_N| \leq |v_N^{(1)} - u_N| + | v^{(1)} - \Phi g  \ast \rho_N + 5 \phi_N e \Phi \ast (\tau (e \Phi g \ast \tau))| + |\Phi g \ast (\bar \rho_N - \rho_N)|. \qquad
	 \end{align}
	 Again, the first term on the right-hand side, we control by \eqref{eq:u_N.v^1},
	 and the third term by \eqref{eq:estimate.convolutions}. The second term is estimated similarly as above, namely, we claim
	 \begin{equation}  \label{eq:v^1.convolution}
	 \begin{aligned}
	 	&| v^{(1)} - \Phi g  \ast \rho_N + 5 \phi_N e \Phi \ast (\tau (e \Phi g \ast \tau))|(x) \lesssim \frac 1 {N |\dist(x,\{X_i\}_i)|} \\
	 	& + e^{Ct} \left(N^{-1/3}+ \phi_N^2 |\log \phi_N| + \phi_N \W_\infty(\rho_N(0),\rho_0) (1+| \log \W_\infty(\rho_N(0),\rho_0)|) \right).
	 \end{aligned}
	 \end{equation}
		To see this, recall from the proof of Theorem \ref{th:tau} (equation \eqref{v^0.convolution}) that 
	 \begin{align} 
	 	| v^{(0)} - \Phi g \ast \rho_N|(x) \lesssim \frac 1 {N |\dist(x,\{X_i\}_i)|} + R e^{Ct}.
	 \end{align}
	 Thus, it suffices to show
	 \begin{align}
	 	&\left| \sum_i Q_i v^{(0)} - 5 \phi_N e \Phi \ast (\tau (e \Phi g \ast \tau))\right|(x) \\ 
	 	& \lesssim   e^{Ct} \left(N^{-1/3} + \phi_N^2 |\log \phi_N| + \phi_N \W_\infty(\rho_N(0),\rho_0) (1+ | \log \W_\infty(\rho_N(0),\rho_0)|) \right).
	 \end{align}
	 The left-hand side is the error term $E_{i,2}$ above except that we are evaluating in $x$ instead of $X_i$ and the sum also runs over $i$. Therefore, the previous inequality follows along the same lines as above, provided we show
	 \begin{align} \label{eq:Q_i.pointwise.everywhere}
	 	 |(Q_i v^{(0)})(x)| \lesssim R \lesssim N^{-1/3}.
	 \end{align}
	 To see this, we use that  by \eqref{eq:Q_i.B_i}, \eqref{eq:Q_i.pointwise.everywhere} also holds in $B_i$. Thus, by the maximum modulus theorem for the Stokes equations (see e.g. \cite{MaremontiRussoStarita99}), \eqref{eq:Q_i.pointwise.everywhere} holds for all $x \in \R^3$. Thus \eqref{eq:v^1.convolution} holds. 
	 
	 Inserting this estimate in \eqref{eq:bar.u_n.u_N} together with \eqref{eq:u_N.v^1} and \eqref{eq:estimate.convolutions} yields
	 	 \begin{align}
	 	&|\bar v_N - u_N|(x)  \lesssim \frac 1 {N |\dist(x,\{X_i\}_i)|}  \\
	 	& + e^{Ct} \left(N^{-1/3}+ \phi_N^2 |\log \phi_N| + \phi_N \W_\infty(\rho_N(0),\rho_0) | \log \W_\infty(\rho_N(0),\rho_0)|  + \W_\infty(\rho_N(0),\bar \rho_N^0\right) ) e^{Ct}. 
	 \end{align}
	 Using $\alpha_2 \lesssim e^{Ct}$, $R + N^{-1/3} + \W_\infty(\rho_N(0),\bar \rho_N^0)\lesssim  \W_p(\rho_N(0), \rho_0)$ and \eqref{eq:int.dist.X_i}, yields for all $q < 3$
	 \begin{align}
	 	\|\bar v_N - u_N\|_{L^q_{\loc}} \lesssim & e^{Ct} \left(N^{-1} \dmin(0)^{-1} + R^3 \dmin(0)^{-2}  + \phi_N^2 |\log \phi_N|  \right.\\
	 	& \left. + \phi_N \W_\infty(\rho_N(0),\rho_0) | \log \W_\infty(\rho_N(0),\rho_0)|  + \W_p(\rho_N(0), \rho_0)\right) ) e^{Ct} \\
	 	 \lesssim & \left(\phi_N^2 |\log \phi_N| + \phi_N \W_\infty(\rho_N(0),\rho_0) | \log \W_\infty(\rho_N(0),\rho_0)|  + \W_p(\rho_N(0), \rho_0)\right) ) e^{Ct}.
	 \end{align}
	 This finishes the proof.
\end{proof}

\section{Well-posedness and estimates for the macroscopic systems}
\label{sec:continuous}

In this section, we prove the results on the continuous systems \eqref{eq:v.tau}, \eqref{eq:u.rho} 
and \eqref{eq:rho_eff.u_eff} stated in Section \ref{sec:transition}.
We begin with the proof of Proposition \ref{pr:rho.rho_eff}, which is the remaining ingredient in the proof of Theorem \ref{th:main}. As before, we will rely on the well-posedness and estimates for the continuous systems which we will prove in Section \ref{sec:well-posedness}. 

\subsection{Proof of Proposition \ref{pr:rho.rho_eff}}

We will use the following slight generalization of \cite[Proposition 2.8]{Loeper06}:
\begin{prop}\label{prop:Loeper}
 Let $\nu_0,\nu_1\in \fP(\R^d)\cap L^\infty(\R^d)$. Then for all $p\in [1,\infty)$ it holds:
 \begin{align}
   \norm{\nu_0-\nu_1}_{W^{-1,p}(\R^d)}\le \max(\norm{\nu_0}_{L^\infty},\norm{\nu_1}_{L^\infty})^{1/{p^\prime}}\W_p(\nu_0,\nu_1).
 \end{align}
\end{prop}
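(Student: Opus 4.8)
The plan is to establish the stronger, homogeneous estimate: for every $\varphi\in C_c^\infty(\Rd)$ with $\norm{\nabla\varphi}_{L^{p'}(\Rd)}\le1$,
\[
  \int_{\Rd}\varphi\,\d(\nu_0-\nu_1)\le\max(\norm{\nu_0}_{L^\infty},\norm{\nu_1}_{L^\infty})^{1/p'}\,\W_p(\nu_0,\nu_1).
\]
This implies the proposition, because $\norm{\nu_0-\nu_1}_{W^{-1,p}(\Rd)}$ does not exceed the supremum of the left-hand side over such $\varphi$ for any of the usual normalisations of $W^{-1,p}$. For $p=1$ the displayed inequality is simply the Kantorovich--Rubinstein duality (a $\varphi$ with $\norm{\nabla\varphi}_{L^\infty}\le1$ is $1$-Lipschitz, so $\int\varphi\,\d(\nu_0-\nu_1)\le\W_1(\nu_0,\nu_1)$), together with $\max(\cdot,\cdot)^{1/p'}=1$ when $p'=\infty$. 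Hence I fix $p\in(1,\infty)$ from now on.

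The argument then follows \cite[Proposition 2.8]{Loeper06}, with H\"older's inequality replacing the Cauchy--Schwarz inequality used there. I would let $\gamma$ be an optimal transport plan between $\nu_0$ and $\nu_1$ for the cost $\abs{x-y}^p$; if $\iint\abs{x-y}^p\,\d\gamma=\W_p(\nu_0,\nu_1)^p=\infty$ there is nothing to prove, so one may assume it is finite. With $\Pi_t(x,y):=(1-t)y+tx$ put $\nu_t:=(\Pi_t)_\#\gamma$, the displacement interpolation, whose endpoints at $t=0,1$ are $\nu_1,\nu_0$; since $p>1$ and $\nu_0$ is absolutely continuous, $(\nu_t)_{t\in[0,1]}$ is the $\W_p$-geodesic joining them. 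By the fundamental theorem of calculus along $[y,x]$, Fubini, and H\"older on $\d\gamma\otimes\d t$ with exponents $p'$ and $p$,
\begin{align}
  \int_{\Rd}\varphi\,\d(\nu_0-\nu_1)
  &=\int_0^1\iint\nabla\varphi(\Pi_t(x,y))\cdot(x-y)\,\d\gamma\,\d t\\
  &\le\Ll(\int_0^1\int_{\Rd}\abs{\nabla\varphi}^{p'}\,\d\nu_t\,\d t\Rr)^{1/p'}\W_p(\nu_0,\nu_1),
\end{align}
using $\iint\abs{\nabla\varphi(\Pi_t(x,y))}^{p'}\,\d\gamma=\int_{\Rd}\abs{\nabla\varphi}^{p'}\,\d\nu_t$.

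The remaining task is to bound $\int_{\Rd}\abs{\nabla\varphi}^{p'}\,\d\nu_t$, and the decisive input is the displacement-interpolation bound
\begin{align}\label{eq:Loeper.interp}
  \norm{\nu_t}_{L^\infty(\Rd)}\le\max(\norm{\nu_0}_{L^\infty},\norm{\nu_1}_{L^\infty})\qquad(t\in[0,1]).
\end{align}
Granting \eqref{eq:Loeper.interp}, one has $\int_{\Rd}\abs{\nabla\varphi}^{p'}\,\d\nu_t\le\norm{\nu_t}_{L^\infty}\norm{\nabla\varphi}_{L^{p'}}^{p'}\le\max(\norm{\nu_0}_{L^\infty},\norm{\nu_1}_{L^\infty})$, and inserting this in the last display and taking the supremum over $\varphi$ completes the proof. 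To prove \eqref{eq:Loeper.interp} I would invoke McCann's displacement convexity of the internal energies $\rho\mapsto\tfrac{1}{m-1}\int_{\Rd}\rho^m$, $m>1$, along the $\W_p$-geodesic (classical for $p=2$; for $p\in(1,\infty)$ it follows from the structure of $\W_p$-geodesics between bounded densities, cf.\ \cite{Santambrogio15}): this gives $\int\nu_t^m\le(1-t)\int\nu_1^m+t\int\nu_0^m\le\max(\norm{\nu_0}_{L^\infty},\norm{\nu_1}_{L^\infty})^{m-1}$, hence $\norm{\nu_t}_{L^m}\le\max(\norm{\nu_0}_{L^\infty},\norm{\nu_1}_{L^\infty})^{1-1/m}$, and letting $m\to\infty$ yields \eqref{eq:Loeper.interp} (in particular $\nu_t\in L^\infty$). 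I expect the only real difficulty to be \eqref{eq:Loeper.interp} for $p\ne2$; everything else is Loeper's calculation with Cauchy--Schwarz replaced by H\"older. Note that \eqref{eq:Loeper.interp}, and thus the proposition, uses the hypothesis $\nu_0,\nu_1\in L^\infty$ in an essential way --- without it the displacement interpolant of two probability densities may concentrate.
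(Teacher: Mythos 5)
Your proof is correct and follows essentially the same route as the paper: parametrize the displacement interpolant $\nu_t$, differentiate $\int\varphi\,\d\nu_t$ in $t$, apply H\"older with exponents $p,p'$ (the paper's text mistakenly says Cauchy--Schwarz but clearly means H\"older), and conclude with the $L^\infty$-preservation along $\W_p$-geodesics. The only real difference is that the paper simply cites \cite[Proposition 7.29]{Santambrogio15} for that last bound while you re-derive it via McCann's displacement convexity of $\int\rho^m$ followed by $m\to\infty$; your separate Kantorovich--Rubinstein treatment of $p=1$, where the geodesic structure is special and the cited proposition does not directly apply, is a careful refinement the paper leaves implicit.
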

\begin{rem}
 In \cite[Exercise 38]{Santambrogio15}, one can find an even more general statement.
\end{rem}
For self-containedness we give the outline of the proof here. 
\begin{proof}
 Since $\nu_0,\nu_1$ are absolutely continuous w.r.t. the Lebesgue measure, there exists an (a.e. unique) optimal transport map $T$ s.t.
  \begin{align}
  \W_p(\nu_0,\nu_1)=\int \abs{x-T(x)}^p\nu_0(x)\dd x.
 \end{align}
  Consider for $\theta\in (0,1)$:
 \begin{align}
  \nu_\theta=(\theta T +(1-\theta) \Id)\#\nu_0.
 \end{align}
 By \cite[Theorem 5.27]{Santambrogio15} $\nu_\theta$ is a constant-speed geodesic connecting $\nu_0,\nu_1$. Therefore, we have by \cite[Proposition 7.29]{Santambrogio15}
 \begin{align}
  \norm{\nu_\theta}_{L^\infty}\le \max\set{\norm{\nu_0}_{L^\infty},\norm{\nu_1}_{L^\infty}}.
 \end{align}
 By definition, for any $\nu\in W^{-1,p}$,
  \begin{align}
  \norm{\nu}_{W^{-1,p}}=\sup\set{\int \nu \varphi: \varphi\in C_0^\infty(\R^d), \norm{\nabla \varphi}_{L^{p^\prime}}\le 1},
 \end{align}
 where $\frac 1 p +\frac{1}{p^\prime}=1$.
 Take any $\varphi\in C^\infty_0(\R^d)$. Then
 \begin{align}
  \int \nu_\theta(x)\varphi(x)\dd x=\int \nu_0(x) \varphi(\theta T(x)+(1-\theta)x)\dd x.
 \end{align}
 Differentiating this, we obtain
 \begin{align}
  \frac{\dd}{\dd \theta}\int \nu_\theta(x)\varphi(x)\dd x=\int \nu_0(x) \nabla \varphi(\theta T(x)+(1-\theta)x)(T(x)-x)\dd x.
 \end{align}
 Using the Cauchy-Schwartz inequality, we arrive at
 \begin{align}
  \frac{\dd}{\dd \theta}\int \nu_\theta(x)\varphi(x)\dd x&\le \bra{\int \nu_0(x)\abs{T(x)-x}^p\dd x}^{1/p}\bra{\int \nu_\theta \abs{\nabla \varphi(x)}^{p^\prime}}^{1/{p^\prime}}\\
  &\le \W_p(\nu_1,\nu_2)\norm{\nu_\theta}_{L^\infty}^{1/{p^\prime}}\norm{\nabla\varphi}_{L^{p^\prime}}.
 \end{align}
 Integration with respect to $\theta$ yields the statement. 
\end{proof}
\begin{proof}[Proof of Proposition \ref{pr:rho.rho_eff}]
We start by proving that
\begin{align}\label{eq:rho_eff.tau}
  \W_\infty(\rho_\eff(t),\tau(t))\ls C(T_\ast,\norm{\rho_0}_{W^{1,1}\cap W^{1,\infty}})\phi_N.
\end{align}
Let $f_N$ be defined as in  Lemma \ref{lem:Wass.deriv}, i.e., 
 \begin{align}
  f_N(t)=\sup_{s \in [0,t]} \tau(s) - \esssup\abs{T_s(x)-x}.
\end{align}
where $T_t=Y(t,0,\cdot)\circ X(0,t,\cdot)$ and  $X,Y$ are the flow maps corresponding to $\tau$, $u_\eff$, respectively.
Then, by Lemma \ref{lem:Wass.deriv} we have for all $0 \leq t_1 \leq t_2$
\begin{align}\label{eq:x_t}
   f_N(t_2) - f_N(t_1) &\le \int_{t_1}^{t_2} \norm{u_\eff(s)\circ T_s-v(s)}_\infty \dd s \\
   &\le\int_{t_1}^{t_2} \norm{v(s)-u_\eff(s)}_{L^\infty}+\norm{\nabla v(s)}_{L^\infty} f_N(s) \dd s.
\end{align}
In order to estimate the difference of $v$ and $u_\eff$ we write down the difference of \eqref{eq:v.tau} and \eqref{eq:rho_eff.u_eff} in the following form:
\begin{align}\label{eq:u_eff.v}
  -\Delta(u_\eff-v)+\nabla p=(\rho_\eff-\tau)g+\dv\bra{5\phi_N\rho_\eff eu_\eff}, && \dv (u_\eff-v) = 0 .
\end{align}
Thus, $u_\eff-v$ is given by the convolution of the right-hand side with the Oseen tensor:
\begin{align*}
 &(u_\eff-v)(s,x)=\int_{\R^3}\Phi(x-y)\bra{\rho_\eff(y)-\tau(y)+\dv\bra{5\phi_N\rho_\eff eu_\eff}(y)}\dd y\\
 &=\int_{\R^3}(\Phi(x-T_s(y))-\Phi(x-y))\tau(y)\dd y-\int_{\R^3}\nabla \Phi(x-y)\bra{5\phi_N\rho_\eff(y) eu_\eff(y)}\dd y.
\end{align*}
Using \eqref{eq:Phi.Lipschitz} (and $\norm{\tau}_{L^1}=\norm{\rho_\eff}_{L^1}=1$) as well as Theorem \ref{th:hoefer} and Theorem \ref{thm:effective} we estimate 
\begin{align}\label{eq:u_eff.v.est}
 \norm{(u_\eff-v)}_{L^\infty}\ls& \int_{\R^3}\abs{T(y)-y}\left(\frac{1}{\abs{x-T(y)}^2}+\frac{1}{\abs{x-y}^2}\right)\tau(y)\dd y\\
 &+\phi_N\int_{\R^3}\frac{1}{\abs{x-y}^2}\rho_\eff(y) eu_\eff(y)\dd y\\
 \ls&  f_N \bra{1+\norm{\tau}_{L^\infty}+\norm{\rho_\eff}_{L^\infty}}+\phi_N(1+\norm{\rho_\eff}_{L^\infty})\norm{\nabla u_\eff}_{L^\infty}\dd y\\
 \le& C(T_\ast,\norm{\rho_0}_{W^{1,1}\cap W^{1,\infty}})(f_N+\phi_N).
\end{align}
Inserting \eqref{eq:u_eff.v.est} into \eqref{eq:x_t} and using Theorem \ref{th:hoefer} again, yields
\begin{align}
 f_N(t_2) - f_N(t_1) &\le C(T_\ast,\norm{\rho_0}_{W^{1,1}\cap W^{1,\infty}}) \int_{t_1}^{t_2} f_N +\phi_N\dd s.
\end{align}
%
In view of Gronwall's inequality and \eqref{eq:f.larger.Wass} this entails \eqref{eq:rho_eff.tau}.

\medskip

We now turn to the proof of the assertion. Notice, that it is enough to prove \eqref{eq:prop_continuous} for large $p$ since we have $\W_q\le \W_p$ for $q<p$ (see \eqref{eq:W.hirarchy}).
Let 
 \begin{align}
  h_N(t)=\sup_{s\le t}\norm{(S_s-\Id)\rho(s)^{1/p}}_{L^p},
\end{align}
where $S_t=Y(t,0,\cdot)\circ Z(0,t,\cdot)$ and  $Y,Z$ are the flow maps corresponding to $\rho_\eff$, and $\rho$, respectively.

Then, Lemma \ref{lem:Wass.deriv} yields for all $0 \leq t_1 \leq t_2$
\begin{align}\label{eq:h1}
h_N(t_2) - h_N(t_1) \leq \int_{t_1}^{t_2} \norm{(u_\eff(s)-u(s))}_{L^p}\norm{\rho_\eff}^{1/p}_{L^\infty}+\norm{\nabla u(s)}_{L^\infty}h_N(s) \dd s.
\end{align}
It remains to estimate $\norm{(u_\eff-u)}_{L^p}$. To this end we subtract \eqref{eq:rho_eff.u_eff} from \eqref{eq:u.rho} to get
\begin{align}\label{eq:u.u_eff}
  -\Delta(u-u_\eff)+\nabla p&=(\rho-\rho_\eff)g+\dv\bra{5\phi_N(\tau ev-\rho_\eff eu_\eff)}\\
  &=(\rho-\rho_\eff)g+5\phi_N\dv\bra{(\tau-\rho_\eff)ev+\rho_\eff(ev-eu_\eff)}.
\end{align}
This equation is the reason why we cannot extend the result to $p=\infty$. Due to the divergence term on the right-hand side, it seems difficult to obtain a good $L^\infty$ estimate for the difference of $u$ and $u_\eff$.

 By linearity we treat each of the terms on the right-hand side separately. We consider the solutions to 
 \begin{align}
 -\Delta w_1+\nabla p=(\rho-\rho_\eff)g, \quad \dv w_1 = 0,\\
 -\Delta w_2+\nabla p=\dv\bra{(\tau-\rho_\eff)ev}, \quad \dv w_2 = 0,\\
 -\Delta w_3+\nabla p=\dv\bra{\rho_\eff(ev-eu_\eff)}, \quad \dv w_3 = 0. \label{eq:w_3}
\end{align}
Let $p>3/2$ and $\frac 1 q=\frac 1 p +\frac 1 3$. Then we have, using Proposition \ref{prop:Loeper},  \eqref{eq:W.hirarchy} and \eqref{eq:f.larger.Wass}: 
\begin{align}\label{eq:w_1}
 \norm{w_1}_{L^p}&\ls \norm{\nabla w_1}_{L^q}\ls \norm{\rho-\rho_\eff}_{W^{-1,q}}\ls \W_q(\rho,\rho_\eff)(\norm{\rho}_{L^\infty}+\norm{\rho_\eff}_{L^\infty})^{1/q'}\\
 &\ls C(\norm{\rho_0}_{L^\infty})h_N.
\end{align}
We now turn to $w_2$. Let $r>3$. Using Proposition \ref{prop:Loeper} and Theorem \ref{th:hoefer}: 
\begin{align}\label{eq:w_2}
 \norm{w_2}_{L^p}&\ls\norm{(\tau-\rho_\eff)ev}_{W^{-1,p}}\\
 &\ls \norm{(\tau-\rho_\eff)}_{W^{-1,p}}\norm{ev}_{W^{1,r}}\\
 &\ls \W_p(\tau,\rho_\eff)(\norm{\tau}_{L^\infty}+\norm{\rho_\eff}_{L^\infty})^{1/p'}(\norm{\tau}_{L^\infty})\\
 &\le C(\norm{\rho_0}_{L^\infty})\phi_N,
\end{align}
where we used, that for $\varphi\in W^{1,p'},\psi\in W^{1,r}$ the product $\varphi\psi$ is again in $W^{1,p'}$ whence 
\begin{align}\label{eq:tau.rho_eff}
 \norm{(\tau-\rho_\eff)ev}_{W^{-1,p}}\ls \norm{(\tau-\rho_\eff)}_{W^{-1,p}}\norm{ev}_{W^{1,r}}.
\end{align}
We now turn to $w_3$. First, equation \eqref{eq:u_eff.v} implies that we have for every $q\in (1,\infty)$ the bound 
\begin{align}
 \norm{\nabla(u_\eff-v)}_{L^q}&\ls \norm{\rho_\eff-\tau}_{W^{-1,q}}+\phi_N\norm{\dv(\rho_\eff eu_\eff)}_{W^{-1,q}}\\
 &\ls \W_q(\rho_\eff,\tau)(\norm{\rho_\eff}_{L^\infty}+\norm{\tau}_{L^\infty})^{1/q'}+\phi_N\norm{\rho_\eff eu_\eff}_{L^q}\\
 &\ls C(\norm{\rho_0}_{L^\infty})\W_q(\rho_\eff,\tau)+\phi_N\norm{\rho_\eff}_{L^\infty} \norm{\nabla u_\eff}_{L^q}\\
 &\ls C(\norm{\rho_0}_{W^{1,1}\cap W^{1,\infty}})(\W_\infty(\rho_\eff,\tau)+\phi_N)\\
 &\ls C(\norm{\rho_0}_{W^{1,1}\cap W^{1,\infty}})\phi_N,
\end{align}
where we used Theorem \ref{thm:effective} and \eqref{eq:rho_eff.tau}. Equation \eqref{eq:w_3} thus yields that for $p>3/2$ and $\frac 1 q=\frac 1 p+\frac 1 3$:
\begin{align}\label{eq:w_3_est}
\begin{aligned}
  \norm{w_3}_{L^p}&\ls \norm{\nabla w_3}_{L^q}\ls \norm{\dv\bra{\rho_\eff(ev-eu_\eff)}}_{W^{-1,q}}=\norm{\rho_\eff(ev-eu_\eff)}_{L^q}\\
  &\ls \norm{\rho_\eff}_{L^\infty}\norm{\nabla(u_\eff-v)}_{L^q}\ls C(\norm{\rho_0}_{W^{1,1}\cap W^{1,\infty}})\phi_N.
\end{aligned}
\end{align}
We notice that $u-u_\eff=w_1+5\phi_N(w_2+w_3)$. Combining \eqref{eq:w_1}, \eqref{eq:w_2} and \eqref{eq:w_3_est} yields
\begin{align}
\norm{(u_\eff-u)(s)}_{L^p}\ls \W_p(\rho,\rho_\eff)+C(\norm{\rho_0}_{W^{1,1}\cap W^{1,\infty}})\phi_N^2.
\end{align}
Inserting this into \eqref{eq:h1}, we obtain
\begin{align}
h_N(t_2) - h_N(t_1) \leq C(\norm{\rho_0}_{W^{1,1}\cap W^{1,\infty}}) \int_{t_1}^{t_2} (h_N+\phi_N^2).
\end{align}
By Gronwall's inequality and \eqref{eq:f.larger.Wass}, this yields the assertion.
\end{proof}

\subsection{Proof of the well-posedness theorems} \label{sec:well-posedness}
 We will here only provide  in full detail the proof of Theorem \ref{thm:effective}, the well-posedness of the effective system \eqref{eq:rho_eff.u_eff}.
The proofs of Theorems \ref{th:hoefer} and \ref{th:intermediate} are easier.
Indeed, system \eqref{eq:rho_eff.u_eff} has the most complicated structure of the three systems \eqref{eq:v.tau},
\eqref{eq:u.rho} and \eqref{eq:rho_eff.u_eff} since it contains an additional nonlinearity in the coefficients of the Stokes equations. In turn, the stability statement is easier to prove in Theorem \ref{th:hoefer} than in Theorem \ref{th:intermediate}, which is why we only give the proof for the latter. Since the coefficients in system \eqref{eq:v.tau} are constant, one needs to impose less regularity for the initial data to get the same differentiability of the velocity field. 

We will start with the proof of Theorem \ref{thm:effective} which is already, in parts, similar to the proof of Proposition \ref{pr:rho.rho_eff}. Afterwards, we will provide the proof of the stability statement in Theorem \ref{th:intermediate} where we can recycle arguments from Proposition \ref{pr:rho.rho_eff} and Theorem \ref{thm:effective}. Finally, we will comment shortly on the changes for Theorem \ref{th:hoefer}.

\begin{proof}[Proof of Theorem \ref{thm:effective}]
For the purpose of brevity and because we only consider one type of equation in this proof, we will omit all subscripts $\eff$.  Moreover, we omit the constant velocity $(6 \pi \gamma_N)^{-1}$ since it can be removed from the system by a change of variables.

We will first prove estimates for a decoupled linear problem in order to apply a fixed point argument.
For $3<p<\infty$ consider $\nu \in W^{1,1}\cap W^{1,p}$ (so that in particular $\nu\in L^\infty$) and the equation
\begin{align*}
-\dv \bra{\bra{2+5\phi\nu}eu} + \nabla p&=\nu,\\
-\dv u&=0.
\end{align*}
The right hand side is in $L^1\cap L^\infty$ and hence in every $L^q$ for $q\in [1,\infty]$. Convolution of the above equation with the Oseen-Tensor yields
\begin{align*}
u-\phi \Phi\ast \dv \bra{5\nu eu}&=\Phi\ast\nu.
\end{align*}
Note that the right-hand side is in $W^{2,q}$, while the left-hand side has the structure $(\Id+\phi T)u$ where the operator $T$ is given by 
$Tu=-\Phi\ast \dv \bra{5\nu eu}$. The operator $T$ is a bounded operator from $W^{2,q}\to W^{2,q}$ for $q\in (3,p]$ with norm bounded by $\norm{\nu}_{W^{1,p}}$. Thus, for small enough $\phi$, it is possible to invert $\Id+\phi T$. This implies $u\in W^{2,q}$ for all $q\in (3,p]$ and $u\in W^{1,\infty}$ by embedding with the corresponding estimates in terms of $\nu$:
%
%
%
%
\begin{align}\label{eq:reg.est}
\norm{u}_{W^{1,\infty}}\le C\norm{\nu}_{W^{1,1}\cap W^{1,p}}.
\end{align} 
For given $\nu\in L^\infty(0,T;W^{1,1}\cap W^{1,p}\cap \fP)$, consider the following problem for $\rho$:
\begin{align} \label{eq:noncoupled_eff}
\left\{\begin{array}{rcl}
-\dv((2+5\phi \nu)eu)+\nabla p&=&\nu g,\\
\dv u&=&0,\\
\partial_t \rho+u\cdot \nabla \rho&=&0, \\
\rho(0) &=& \rho_0.
\end{array}\right.
\end{align}
Since $u\in L^\infty(0,T;W^{1,\infty})$, the solution to \eqref{eq:noncoupled_eff} exists and we have $\norm{\rho(t)}_{L^\infty}=\norm{\rho_0}_{L^\infty}$ as well as $\norm{\rho(t)}_{L^1}=\norm{\rho_0}_{L^1}$.

The gradient of $\rho$ satisfies the following equation:
\begin{align} \label{eq:gradient_rho}
\partial_t \nabla \rho+(u\cdot \nabla) \nabla\rho+(\nabla u)^T \nabla\rho&=0, \\
\nabla \rho(0) &= \nabla \rho_0.
\end{align}
The coefficients $u,\nabla u$ are regular enough ($u\in L^\infty(0,T;W^{1,\infty})$), so that DiPerna-Lions theory \cite{DiPernaLions89} yields a unique solution $\nabla\rho \in L^\infty(0,T;L^q)$ to this problem for all $q\in [1,p]$. 
Thus $\nabla \rho\in L^\infty(0,T;L^1\cap L^p)$ with the following estimate:
\begin{align}\label{eq:grad.growth}
\begin{aligned}
\norm{\nabla\rho}_{L^\infty(0,T;L^q)}&\le\norm{\nabla \rho_0}_{L^q}\exp\bra{\int_0^{T}\norm{\nabla u(t)}_{L^\infty}\dd t} \\
&\le \norm{\nabla \rho_0}_{L^q}\exp\bra{T\norm{\nu}_{L^\infty(0,T;W^{1,1}\cap W^{1,p})}}, 
\end{aligned}
\end{align}
where we used \eqref{eq:reg.est}. Thus, for small enough $T$, the map $A$ that maps $\nu$ to $\rho$, i.e. $\rho=A(\nu)$ maps the set 
\begin{align} 
B_1=\set{\nu:\norm{\nu}_{L^\infty(W^{1,p}\cap W^{1,1})}\le C_1},
\end{align} 
to itself, where $C_1$ has to be chosen larger than $\norm{\rho_0}_{W^{1,p}\cap W^{1,1}}$. Let $X$ be the flow map corresponding to \eqref{eq:noncoupled_eff} and $T_t(\cdot)=X(t,0,\cdot)$. Then $T_t$ is a transport plan with $\rho(t)=T_t\#\rho_0$. We compute using \eqref{eq:reg.est}: 
\begin{align}\label{eq:dist.rho0}
\W_p(\rho(t),\rho_0)&\le\bra{\int_{\R^3}\abs{T_t(x)-x}^p\rho_0(x)\dd x}^{1/p}\\
&\leq \int_0^t\bra{\int_{\R^3}\abs{u(s,T_s(x))}^p\rho_0(x)\dd x}^{1/p}\dd s\\
&\le \int_0^t\bra{\int_{\R^3}\abs{u(s,x)}^p\rho(s,x)\dd x}^{1/p}\dd s\\
&\le t\norm{u}_{L^\infty(0,T;L^\infty)}\\
&\ls T\norm{\nu}_{L^\infty(0,T;W^{1,p}\cap W^{1,1})}.
\end{align} 
Therefore $A$ maps 
\begin{align} 
B=\set{\nu:\W_p(\nu(s),\rho_0)<\infty \text{ for all }s\in [0,T], \norm{\nu}_{L^\infty(0,T;W^{1,p}\cap W^{1,1})}\le C_1},
\end{align} 
to itself. For the foregoing argument we could have chosen the (simpler) $\W_\infty$ distance but we are only able to prove contractivity in $\W_p$. We want to prove contractivity of $A$ in $B$ w.r.t. $L^\infty(\W_p)$.
$B$ is complete with respect to this metric, since in view of the bound on the $L^\infty(0,T;W^{1,p}\cap W^{1,1})$ norm in $B$ any Cauchy sequence has a subsequence that converges weakly-$\ast$ in $L^\infty(0,T;W^{1,p})$ and its limit coincides with the limit in the Wasserstein metric (and thus the whole sequence converges weakly-$\ast$ in $L^\infty(0,T;W^{1,p})$) and the limit satisfies the same bound. This convergence also implies that the limit satisfies the $L^\infty(0,T;W^{1,1})$-bound since for almost every time $t \in (0,T)$, the sequence converges weakly in $W^{1,1}_\loc$.

 Let $\nu_1,\nu_2 \in B$. Let $u_1,\rho_1,u_2,\rho_2$ be the corresponding solutions to \eqref{eq:noncoupled_eff}. 
Consider
 \begin{align}
  f(t)=\sup_{s\le t}\norm{(T_s-\Id)\rho_2(s)^{1/p}}_{L^p},
\end{align}
where $T_t=Y_1(t,0,\cdot)\circ Y_2(0,t,\cdot)$ and  $Y_1,Y_2$ are the flow maps corresponding to $\rho_1$, $\rho_2$, respectively. 
 Using Lemma \ref{lem:Wass.deriv} we have for all $0 \leq t_1 \leq t_2$
 \begin{align}\label{eq:exist.wass}
   f(t_2) - f(t_1)   &\le\int_{t_1}^{t_2} \norm{(u_2-u_1)(s)}_{L^p}\norm{\rho_2(s)}^{1/p}_{L^\infty}+\norm{\nabla u_1(s)}_{L^\infty}f(s)\dd s.
\end{align}

By \eqref{eq:reg.est} we have $\norm{\nabla u_1}_{L^\infty(0,T;L^\infty)}\ls \norm{\nu_1}_{L^\infty(0,T;W^{1,p}\cap W^{1,1})}\le C_1$. It remains to estimate the difference of $u_1$ and $u_2$. We argue similarly to the proof of Proposition \ref{pr:rho.rho_eff} following Equation \eqref{eq:u.u_eff}. We start by writing 
\begin{align}
  -\dv((2+5\phi\nu_1)(eu_1-eu_2))+\nabla p&=(\nu_1-\nu_2)g+5\phi\dv\bra{(\nu_1-\nu_2) eu_2)}.
\end{align}
The left-hand side is the sum of the solutions $w_1,w_2$ to
\begin{align}
  -\dv((2+5\phi\nu_1)ew_1)+\nabla p&=(\nu_1-\nu_2)g,\\
  -\dv((2+5\phi\nu_1)ew_2)+\nabla p&=5\phi\dv\bra{(\nu_1-\nu_2) eu_2)}.
\end{align}
Let $\frac 1 q =\frac 1 p +\frac 1 3 $. Building on the optimal regularity of the Stokes equations, by a perturbative argument as the one preceding \eqref{eq:reg.est}, we know that
\begin{align}
\norm{\nabla w_1}_{L^q}\ls \norm{\nu_1-\nu_2}_{W^{-1,q}},
\end{align}
and, recalling \eqref{eq:tau.rho_eff}:
\begin{align}
\norm{w_2}_{L^p}&\ls \norm{(\nu_1-\nu_2)eu_2}_{W^{-1,p}}\\
&\ls  \norm{(\nu_1-\nu_2)}_{W^{-1,p}}\norm{eu_2}_{W^{1,p}}.
\end{align}
Combining these inequalities and using Proposition \ref{prop:Loeper} and yields
\begin{align}
\norm{u_1-u_2}_{L^p}&=\norm{w_1+w_2}_{L^p}\\
&\ls \norm{\nabla w_1}_{L^q}+\norm{w_2}_{L^p}\\
&\ls \norm{\nu_1-\nu_2}_{W^{-1,q}}+\norm{(\nu_1-\nu_2)}_{W^{-1,p}}\norm{eu_2}_{W^{1,p}}\\
&\ls \bra{\W_q(\nu_1,\nu_2)+\W_p(\nu_1,\nu_2)}\bra{1+\norm{\nu_1}_{L^\infty}+\norm{\nu_2}_{L^\infty}}\norm{\nu_2}_{W^{1,p}\cap W^{1,1}}\\
&\ls (1+C_1^2) \W_p(\nu_1,\nu_2).
\end{align}
If we insert this into equation \eqref{eq:exist.wass} we obtain
\begin{align}
f(t_2) - f(t_1)\ls C \int_{t_1}^{t_2} \bra{\W_p(\nu_1(s),\nu_2(s))+f(s)} \dd s,
\end{align}
which implies by a Gronwall type argument and \eqref{eq:f.larger.Wass}
\begin{align}
\W_p(\rho_1,\rho_2)&\le  \sup_{s\in [0,T]}\W_p(\nu_1(s),\nu_2(s))(e^{Ct}-1).
\end{align}
This implies that $A$ is a contraction for small $T$ and thus admits a fixed point $\rho_\eff$ in $B$ which is a solution to \eqref{eq:rho_eff.u_eff}. 

By \eqref{eq:dist.rho0} and  \eqref{eq:grad.growth} both the $\W_p$-distance to $\rho_0$ and the $L^\infty(W^{1,1}\cap W^{1,p})$ norm stay bounded for any $T$. Thus the solution must exist for all times. 
Notice that by the method of characteristics \eqref{eq:grad.growth} holds in particular for $q=\infty$. Thus $\rho\in W^{1,\infty}$ and the norm bound is the same as for $p<\infty$ by taking the limit $p\to \infty$.
\end{proof}

\begin{proof}[Proof of Theorem \ref{th:intermediate}]
The proof of existence is largely analogous to the one for Theorem \ref{thm:effective}. The Einstein term on the left-hand side can be regarded as a right-hand side term since it is already known. The achievable regularity, however, stays the same as for the effective model.

The stability result is proved in the same way as \cite[Theorem 3.1]{Hauray09}.
For completeness, we give provide the short proof in the case $1 \leq p < \infty$. The case $p = \infty$ is analogous. 
Consider
 \begin{align}
  f(t)=\sup_{s\le t}\norm{(T_s-\Id)\rho_2(s)^{1/p}}_{L^p},
\end{align}
where $T_t=Y_1(t,0,\cdot)\circ Y_2(0,t,\cdot)$ and  $Y_1,Y_2$ are the flow maps corresponding to $\rho_1$, $\rho_2$, respectively. 
By Lemma \ref{lem:Wass.deriv} we have for all $0 \leq t_1 \leq t_2$
\begin{align}\label{eq:derivest}
   f(t_2) - f(t_1)   &\le\int_{t_1}^{t_2} \norm{(u_2-u_1)(s)}_{L^p}\norm{\rho_2(s)}^{1/p}_{L^\infty}+\norm{\nabla u_1(s)}_{L^\infty}f(s)\dd s.
\end{align}
We notice that we have
\begin{align}
-\Delta(u_1-u_2)+\nabla p=(\rho_1-\rho_2)g,
\end{align} 
which yields, using Jensen's inequality
\begin{align}
|u_2 (Tx) -u_1(x)|^p
&= \left(\int \Phi(Tx - y)g \rho_2(y) - \Phi(x - y)\rho_1(y) \dd y \right)^p  \\
& \lesssim \left(\int |\Phi(Tx - Ty) - \Phi(x - y)| \rho_1(y) \dd y \right)^p \\
& \lesssim  \left(\int (|x-Tx| + |y - Ty|) \left(\frac{1}{|Tx - Ty|^2} + \frac 1 {|x-y|^2} \right) \rho_1(y) \dd y \right)^p  \\
& \lesssim  \int (|x-Tx|^p + |y - Ty|^p) \left(\frac{1}{|Tx - Ty|^2} + \frac 1 {|x-y|^2} \right) \rho_1(y) \dd y \\
&\quad \times \left( \int \left(\frac{1}{|Tx - Ty|^2} + \frac 1 {|x-y|^2} \right) \rho_1(y) \dd y   \right)^{p-1}.
\end{align}
Thus, 
\begin{align}
 &\norm{(u_2 \circ T -u_1)\rho_1^{1/p}}_{L^p} \\
&\lesssim \left( \iint (|x-Tx|^p + |y - Ty|^p) \left(\frac{1}{|Tx - Ty|^2} + \frac 1 {|x-y|^2} \right) \rho_1(y) \rho_1(x) \dd y \dd x \right)^{\frac 1 p} \\
& \quad\times\sup_x \left( \int \left(\frac{1}{|Tx - Ty|^2} + \frac 1 {|x-y|^2} \right) \rho_1(y) \dd y   \right)^{\frac{p-1} p} \\
&\lesssim  \W_p (\rho_1,\rho_2) \sup_x \int \left(\frac{1}{|Tx - Ty|^2} + \frac 1 {|x-y|^2} \right) \rho_1(y) \dd y \\
&\lesssim (1 +  \|\rho_1\|_{L^\infty} + \|\rho_2\|_{L^\infty}) \W_p (\rho_1,\rho_2).
\end{align}

Inserting the above inequality into \eqref{eq:derivest} and employing a Gronwall argument for $\W_p(\rho_1,\rho_2)$ yields the stability in $\W_p$ in view of \eqref{eq:f.larger.Wass}.

To estimate $u_1-u_2$, let $p> 3/2$ and $1/q = 1/ p + 1/3$.
Then we have, using Proposition \ref{prop:Loeper}: 
\begin{align}
 \norm{u_1-u_2}_{L^p}&\ls \norm{\nabla (u_1-u_2)}_{L^q}\ls \norm{\rho_1-\rho_2}_{W^{-1,q}}\ls \W_q(\rho_1,\rho_2)(\norm{\rho_1}_{L^\infty}+\norm{\rho_2}_{L^\infty})^{1/q'}\\
 &\ls C\W_q(\rho_0^1,\rho_0^2).
\end{align}
Using \eqref{eq:W.hirarchy} finishes the proof.
\end{proof}

\begin{proof}[Proof of Theorem \ref{th:hoefer}]
The existence proof is again analogous to the one for Theorem \ref{thm:effective}. Since there is no Einstein term on the left-hand side, the velocity field $v$ possesses two weak derivative more than the right-hand side and we do not need derivatives of the density. We do not need to consider the transport equation for the gradient. The rest of the argument is basically unchanged. See also \cite{Mecherbet20}. The proof of the stability estimate is completely analogous to the one in the proof of Theorem \ref{th:intermediate} since by subtracting the equations for two solutions the Einstein term is annihilated.
\end{proof}

\section*{Acknowledgements}

We warmly thank Juan J.L. Vel\'azquez for discussions. The first author has been supported by the Deutsche Forschungsgemeinschaft (DFG, German Research Foundation) through the collaborative research center ``The Mathematics of Emerging Effects'' (CRC 1060,Projekt-ID 211504053) and the Hausdorff Center for Mathematics (GZ 2047/1, Projekt-ID 390685813). The second author was associated member of the Research Training Group (DFG Graduierten Kolleg) funded by the DFG – Projektnummer 320021702/GRK2326 – ``Energy, Entropy, and Dissipative Dynamics (EDDy)'' at the RWTH Aachen while conducting the research for this article and thanks the Institute for Applied Mathematics Bonn for its hospitality during the research stays in Bonn.
\appendix
\section{Wasserstein distances}\label{app:Wasserstein}

In this appendix we briefly review the most important definitions regarding Wasserstein distances.

\begin{defi}
For two probability measures $\mu,\nu \in \P(\R^3)$ the set of couplings $\Gamma(\mu,\nu)$ is defined as the set of all probability measures $\gamma\in \P(\R^3\times \R^3)$ with first marginal $\mu$ and second marginal $\nu$, i.e.
\begin{align}
\gamma\in \Gamma(\mu,\nu) \iff \int_{\R^3\times \R^3}(\varphi(x)+\psi(y))\dd \gamma(x,y)=\int_{\R^3}\varphi(x)\dd \mu(x)+\int_{\R^3}\psi(x)\dd \nu(x)
\end{align}
for all bounded and continuous functions $\varphi,\psi: \R^3\to \R$.
\end{defi}

\begin{defi}
Let $\mu,\nu \in \P(\R^3)$. Then, for $p\in [1,\infty)$, the $p$-Wasserstein distance is defined as
\begin{align}
\W_p(\mu,\nu)=\inf_{\gamma\in \Gamma(\mu,\nu)}\bra{\int_{\R^3\times \R^3}\abs{x-y}^p\dd \gamma(x,y)}^{1/p}.
\end{align}
For $p=\infty$ we set
\begin{align}
\W_\infty(\mu,\nu)=\inf_{\gamma\in \Gamma(\mu,\nu)}\gamma-\esssup\abs{x-y}.
\end{align}
\end{defi}

In several important cases (and in this article), because of additional information on the probability measures, the existence of an optimal transport map $T$ is ensured such that $\nu=T\#\mu$ and such that $(\Id,T\# \mu)$ is a minimizer for the Wasserstein distance. In this case $T$ is also optimal with respect to all other maps $T'$ with $\nu=T'\#\mu$.

\printbibliography

\end{document}